\author{Vasileios Chousionis, Katrin F\"assler, and Tuomas Orponen}
\title[Singular integrals on intrinsic graphs]{Boundedness of singular integrals on $C^{1,\alpha}$ intrinsic graphs in the Heisenberg group}
\keywords{Singular integrals, Heisenberg group, Removable sets for harmonic functions}
\address{Department of Mathematics, University of Connecticut, USA}
\email{vasileios.chousionis@uconn.edu}
\address{Department of Mathematics, University of Fribourg, Switzerland}
\email{katrin.faessler@unifr.ch}
\address{Department of Mathematics and Statistics, University of Helsinki, Finland}
\email{tuomas.orponen@helsinki.fi}
\subjclass[2010]{42B20 (Primary) 31C05, 35R03, 32U30, 28A78 (Secondary)}
\thanks{V.C. is supported by  the Simons Foundation via the project `Analysis and dynamics in Carnot groups', Collaboration grant no.\  521845. K.F. is supported by  Swiss National Science Foundation via the project `Intrinsic rectifiability and mapping theory on the Heisenberg group', grant no.\ 161299. T.O. is supported by the Academy of Finland via the project `Restricted families of projections and connections to Kakeya type problems', grant no.\ 274512.}
\newcommand{\Per}{\textup{Per}}
\newcommand{\R}{\mathbb{R}}
\newcommand{\V}{\mathbb{V}}
\newcommand{\He}{\mathbb{H}}
\newcommand{\N}{\mathbb{N}}
\newcommand{\W}{\mathbb{W}}
\newcommand{\Z}{\mathbb{Z}}
\newcommand{\calL}{\mathcal{L}}
\newcommand{\calD}{\mathcal{D}}
\newcommand{\calH}{\mathcal{H}}
\newcommand{\calB}{\mathcal{B}}
\newcommand{\calC}{\mathcal{C}}
\newcommand{\calS}{\mathcal{S}}
\newcommand{\calR}{\mathcal{R}}
\newcommand{\spt}{\operatorname{spt}}
\newcommand{\calK}{\mathcal{K}}
\newcommand{\ra}{\rightarrow}
\newcommand{\ve}{\varepsilon}
\newcommand{\diam}{\operatorname{diam}}
\newcommand{\dist}{\operatorname{dist}}
\newcommand{\sgn}{\operatorname{sgn}}
\newcommand{\Lip}{\operatorname{Lip}}
\newcommand{\stm}{\setminus}
\numberwithin{equation}{section}
\numberwithin{equation}{section}
\newtheorem{thm}{Theorem}[section]
\newtheorem{lemma}[thm]{Lemma}
\newtheorem{cor}[thm]{Corollary}
\theoremstyle{definition}
\newtheorem{proposition}[thm]{Proposition}
\newtheorem{conjecture}[thm]{Conjecture}
\newtheorem{ex}[thm]{Example}
\theoremstyle{definition}
\newtheorem{claim}[thm]{Claim}
\newtheorem{df}[thm]{Definition}
\theoremstyle{definition}
\newtheorem{definition}[thm]{Definition}
\theoremstyle{definition}
\newtheorem{rem}[thm]{Remark}
\newtheorem{remark}[thm]{Remark}
\newcommand{\nref}[1]{(\hyperref[#1]{#1})}
\begin{document}

\begin{abstract} We study singular integral operators induced by $3$-dimensional Calder\'on-Zygmund kernels in the Heisenberg group. We show that if such an operator is $L^{2}$ bounded on vertical planes, with uniform constants, then it is also $L^{2}$ bounded on all intrinsic graphs of compactly supported $C^{1,\alpha}$ functions over vertical planes.

In particular, the result applies to the operator $\calR$ induced by the kernel
\begin{displaymath} \mathcal{K}(z) = \nabla_{\He} \| z \|^{-2}, \qquad z \in \He \setminus \{\mathbf{0}\}, \end{displaymath}
the horizontal gradient of the fundamental solution of the sub-Laplacian. The $L^{2}$ boundedness of $\calR$ is connected with the question of removability for Lipschitz harmonic functions. As a corollary of our result, we infer that the intrinsic graphs mentioned above are non-removable. Apart from subsets of vertical planes, these are the first known examples of non-removable sets with positive and locally finite $3$-dimensional measure.
\end{abstract}

\maketitle

\tableofcontents

\section{Introduction}

The purpose of this paper is to study the boundedness of certain $3$-dimensional singular integrals on intrinsic graphs in the first Heisenberg group $\He$, a $3$-dimensional manifold with a $4$-dimensional metric structure. All the formal definitions will be deferred to Section \ref{s:Defs}, so this introduction will be brief, informal and not entirely rigorous.

We study singular integral operators (SIOs) of convolution type. In $\He$, this refers to objects of the following form:
\begin{equation}\label{SIOIntro} T_{\mu}f(p) = \int K(q^{-1} \cdot p)f(q) \, d\mu(q), \end{equation}
where $K \colon \He \setminus \{\mathbf{0}\} \to \R^{d}$ is a \emph{kernel}, and $\mu$ is a locally finite Borel measure. Specifically, we are interested in the $L^{2}$ boundedness of the operator $f \mapsto T_{\calH}f$ on certain $3$-regular surfaces $\Gamma \subset \He$, where $\calH = \calH^{3}|_{\Gamma}$ is $3$-dimensional Hausdorff measure restricted to $\Gamma$. The relevant surfaces $\Gamma$ are the \emph{intrinsic Lipschitz graphs}, introduced by Franchi, Serapioni and Serra Cassano \cite{MR2287539} in 2006. These are the Heisenberg counterparts of (co-dimension $1$) Lipschitz graphs in $\R^{d}$. In the Euclidean environment, the boundedness of SIOs on Lipschitz graphs, and beyond, is a classical topic, developed by Calder\'on \cite{Calderon}, Coifman-McIntosh-Meyer \cite{CMM}, David \cite{MR956767}, David-Semmes \cite{DS1}, and many others.

If $\Gamma$ is a \emph{vertical plane} $\W$ (a plane in $\He$ containing the vertical axis), then the boundedness of $T_{\calH}$ on $L^{2}(\calH)$ is essentially a Euclidean problem. In fact, as long as $p,q \in \W$, the group operation $p \cdot q$ behaves like addition in $\R^{2}$. Also, $\calH = \calH^{3}|_{\W}$ is simply a constant multiple of $2$-dimensional Lebesgue measure. So, $T_{\calH}$ can be identified with a convolution type SIO in $\R^{2}$.\footnote{One should keep in mind, however, that if $K$ is a Calder\'on-Zygmund kernel in $\He$, then the restriction of $K$ to $\W$ satisfies the standard growth and H\"older continuity estimates with respect to a non-Euclidean metric on $\W$.} The $L^{2}$ boundedness question for such operators is classical, see Stein's book \cite{stein1993harmonic}, and for instance Fourier-analytic tools are applicable.

The main result of the paper, see Theorem \ref{mainIntro} below, asserts that solving the Euclidean problem automatically yields information on the non-Euclidean problem. Before making that statement more rigorous, however, we ask: what are the natural SIOs in $\He$, in the context of the $3$-dimensional surfaces $\Gamma$? In $\R^{d}$, a prototypical singular integral is the $(d - 1)$-dimensional \emph{Riesz transform}, whose kernel is the gradient of the fundamental solution of the Laplacian,
\begin{displaymath} \calK_{\R^{d}}(x) = \nabla |x|^{-(d - 2)}. \end{displaymath}
The boundedness of the associated singular integral operator $\calR_{\R^{d}}$ is connected with the problem of removability for Lipschitz harmonic functions. A closed set $E \subset \R^{d}$ is \emph{removable for Lipschitz harmonic functions}, or just \emph{removable}, if whenever $D \supset E$ is open, and $f \colon D \to \R$ is Lipschitz and harmonic in $D \setminus E$, then $f$ is harmonic in $D$. In brief, the connection between $\calR_{\R^{d}}$ and removability is the following: if $\calR_{\R^{d}}$ is bounded on $\Gamma$ for some closed $(d - 1)$-regular set $\Gamma$, then $\Gamma$, or positive measure closed subsets of $\Gamma$, are \textbf{not} removable for Lipschitz harmonic functions, see Theorem 4.4 in \cite{MR1372240}.  The importance of the Riesz transform in the study of removability is highlighted in the seminal papers by David and Mattila \cite{dm}, and Nazarov, Tolsa and Volberg \cite{ntov, ntov2}. Using, among other things, techniques from non-homogeneous harmonic analysis, they characterise removable sets as  the purely $(d-1)$-unrectifiable sets in $\R^d$, that is, the sets which intersect every $\mathcal{C}^1$ hypersurface in a set of vanishing $(d-1)$-dimensional Hausdorff measure.

In $\He$, the counterparts of harmonic functions are solutions to the sub-Laplace equation $\Delta_{\He} u = 0$, see Section \ref{s:heisenbergGroup}, or \cite{BLU}. With this notion of harmonicity, the problem of removability in $\He$ makes sense, and has been studied in \cite{CM, MR3347479}. Also, as in $\R^{d}$, removability is connected with the boundedness of a certain singular integral $\calR_{\He}$, now with kernel
\begin{displaymath} \calK_{\He}(z) = \nabla_{\He} \|z\|^{-2}, \end{displaymath} where $\|\cdot\|$ denotes the Kor\'{a}nyi distance.
In contrast to the Euclidean case, this kernel is \textbf{not} antisymmetric in the sense $\calK_{\He}(z) = -\calK_{\He}(z^{-1})$. Nevertheless, it is known that the associated SIO $\calR_{\He}$ is $L^{2}$ bounded on vertical planes, see
Remark 3.15
in \cite{CM}.
This is due to the fact that $\calK_{\He}$ is \emph{horizontally antisymmetric}: $\calK_{\He}(x,y,t) = -\calK_{\He}(-x,-y,t)$ for $(x,y,t) \in \He$. On vertical planes $\W$, this amount of antisymmetry suffices to guarantee boundedness on $L^{2}$ by classical results, see for instance Theorem 4 on p. 623 in \cite{stein1993harmonic}.


We now introduce the main theorem. We propose in Conjecture \ref{mainC} that convolution type SIOs with Calder\'on-Zygmund kernels which are uniformly $L^2$-bounded on vertical planes, are also bounded on intrinsic Lipschitz graphs $\Gamma \subset \He$. This would prove that such sets $\Gamma$ are non-removable -- a fact which, before the current paper, was only known for the vertical planes $\W$.
In this paper, we verify Conjecture \ref{mainC} for the intrinsic graphs of compactly supported intrinsically $C^{1,\alpha}(\W)$-functions, defined on vertical planes $\W \subset \He$, see Definitions \ref{d:intrinsicGraph} and \ref{d:C1,alpha,W}. This class contains all compactly supported Euclidean $\calC^{1,\alpha}$-functions, with the identification $\W \cong \R^{2}$, see Remark \ref{e:euclideanHolder}.
\begin{thm}\label{mainIntro} Let $\alpha > 0$, and assume that $\phi \in C^{1,\alpha}(\W)$ has compact support. Then, any convolution type SIO with a Calder\'on-Zygmund kernel which is uniformly $L^2$-bounded on vertical planes, is bounded on $L^{2}(\mu)$ for any $3$-Ahlfors-David regular measure $\mu$ supported on the intrinsic graph $\Gamma$ of $\phi$. In particular, this is true for the $3$-dimensional Hausdorff measure on $\Gamma$.
\end{thm}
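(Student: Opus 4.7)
The plan is to reduce the $L^{2}(\mu)$-boundedness problem on $\Gamma$ to the known $L^{2}$-boundedness on vertical planes through a scale-by-scale comparison. First I would parametrize $\Gamma$ via the intrinsic graph map $\Phi \colon \W \to \He$ associated to $\phi$, and pull back $\mu$ to a measure $\nu$ on $\W$. Since $\phi \in C^{1,\alpha}(\W)$ is compactly supported, $\Phi$ is a bi-H\"older homeomorphism between $(\W,d)$ (where $d$ is the restriction of the Kor\'anyi distance to $\W$) and $(\Gamma,d)$, so $\nu$ inherits $3$-ADR from $\mu$. Thus it suffices to establish $L^{2}(\nu)$-boundedness of the transferred operator $\tilde{T}$ with kernel $\tilde{K}(w,w') := K(\Phi(w')^{-1} \cdot \Phi(w))$.

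Next I would introduce a Christ-style dyadic decomposition $\mathcal{D}$ of $\W$ adapted to $\nu$; for each cube $Q \in \mathcal{D}$ of scale $\ell(Q)$, let $\W_{Q}$ denote the tangent vertical plane to $\Gamma$ at the center $p_{Q} = \Phi(w_{Q})$ of $\Phi(Q)$, translated to pass through $p_{Q}$. The intrinsic $C^{1,\alpha}$ regularity of $\phi$ should yield the flatness estimate
\[
\sup_{p \in \Gamma \cap B(p_{Q}, C\ell(Q))} \dist(p, \W_{Q}) \lesssim \ell(Q)^{1+\alpha'},
\]
for a fixed $\alpha' > 0$, together with H\"older continuity of $Q \mapsto \W_{Q}$. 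I would then decompose $\tilde{T} = \sum_{k} T_{k}$ according to Kor\'anyi-dyadic annuli of scale $2^{-k}$ via a smooth cutoff of the kernel, and compare each $T_{k}$ to an analogous operator $T_{k}^{\mathrm{plane}}$ built from pieces of the SIOs on the approximating planes $\W_{Q}$ with $\ell(Q) \sim 2^{-k}$. Combining H\"older continuity of $K$ with the flatness estimate, the kernel of $T_{k} - T_{k}^{\mathrm{plane}}$ gains a factor $2^{-k\alpha''}$ and can be controlled by Schur's test, while each $T_{k}^{\mathrm{plane}}$ is $L^{2}(\nu)$-bounded with a constant independent of $k$ by the standing hypothesis applied uniformly to the $\W_{Q}$. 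A Cotlar--Knapp--Stein almost-orthogonality argument across disjoint Kor\'anyi scales then assembles the pieces into a bounded $\tilde{T}$.

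The hardest part will be the tangent-plane comparison in the Heisenberg geometry. In $\R^{n}$ the approximation of a $C^{1,\alpha}$ graph by its tangent plane is direct, but in $\He$ the tangent vertical plane $\W_{Q}$ is generally not parallel to $\W$, and the non-commutative group law means that $\Phi(w')^{-1} \cdot \Phi(w)$ does not split cleanly into horizontal and vertical components. A careful estimate of $\Phi(w')^{-1} \cdot \Phi(w)$ as a controlled perturbation of its tangent-plane counterpart --- in a form that can be fed into a H\"older-continuous Calder\'on--Zygmund kernel --- must exploit the full strength of the Franchi--Serapioni--Serra Cassano calculus for intrinsic $C^{1,\alpha}$ graphs, together with the quantitative smoothness of the Kor\'anyi norm away from the origin. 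Once such a comparison is available, summing the geometric series in $\alpha''$ across scales and invoking the hypothesis on each $\W_{Q}$ closes the argument.
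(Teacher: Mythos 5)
Your geometric ingredient --- approximating $\Gamma$ near a scale-$\ell(Q)$ cube by its tangent vertical plane with error $\lesssim \ell(Q)^{1+\alpha'}$ --- is the right key estimate and matches the paper's Proposition \ref{p:affineApproximation}; likewise your annular (Littlewood--Paley) decomposition $\tilde T = \sum_k T_k$ mirrors the paper's splitting $K = \sum_j K_{(j)}$. The genuine gap is in the assembly step. You control $\sum_k (T_k - T_k^{\mathrm{plane}})$ by Schur's test with the geometric gain $2^{-k\alpha''}$ --- this part is fine --- but then you propose to handle $\sum_k T_k^{\mathrm{plane}}$ by noting each piece has uniformly bounded $L^2$ norm and invoking Cotlar--Knapp--Stein. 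That does not close: Cotlar--Stein requires decay of $\|T_j^{\mathrm{plane}\ast} T_k^{\mathrm{plane}}\|^{1/2} + \|T_j^{\mathrm{plane}} T_k^{\mathrm{plane}\ast}\|^{1/2}$ in $|j-k|$, which does not follow from the size and H\"older bounds alone; it needs some pointwise cancellation such as vanishing mean on annuli, and the UBVP hypothesis --- an integrated, all-scales boundedness statement on vertical planes --- does not supply it. The difficulty is already visible when $\Gamma = \W$ itself: there $T_k^{\mathrm{plane}} = T_k$, and the Cotlar--Stein step would amount to re-deriving the $L^2$ boundedness on the plane from its annular pieces alone, which is precisely the hypothesis rather than something obtainable this way. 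A secondary issue is that the approximating plane $\W_Q$ varies with the cube $Q$, so $\sum_k T_k^{\mathrm{plane}}$ is not the restriction of the given SIO to any single vertical plane; UBVP applies only piece-by-piece, which, as above, is insufficient.

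The paper sidesteps both problems by replacing almost-orthogonality with a $T1$ theorem on the homogeneous space $(\Gamma,d,\mu)$, reducing to the testing inequality $\|T_{\mu,\epsilon}\chi_R\|_{L^2(\mu|_R)}^2 \lesssim \mu(R)$ for Christ cubes $R$. For a fixed $p\in R$, the relevant finite sum $K_I$ of annular kernel pieces at scales up to $\dist(p,\Gamma\setminus R)$ produces a \emph{single} integral $\int_\Gamma K_I(q^{-1}\cdot p)\,d\mu(q)$, which is compared to $\int_{\W_0} K_I(q^{-1}\cdot p)\,d\calS^3(q)$ over the tangent plane $\W_0$ at $p$; the difference is controlled scale-by-scale by your flatness estimate (and sums by the geometric gain in $\alpha$), while the plane integral is bounded by the annular boundedness condition, which the paper first derives from UBVP. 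Thus the sum over scales is absorbed into one integrated estimate at a single tangent plane, to which the hypothesis applies --- no operator almost-orthogonality is needed. If you wish to keep a decomposition-by-scales strategy, you would still need to extract such an integrated cancellation statement from UBVP and incorporate it; the $T1$ route is the clean way to do that.
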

In particular, the result applies to the operator $\calR_{\He}$, as its $L^{2}$-boundedness on vertical planes is known. The formal connection between the boundedness of the singular integral $\calR_{\He}$, and removability, is explained in the following result:
\begin{thm}\label{mainRem} Assume that $\mu$ is a non-trivial positive Radon measure on $\He$, satisfying the growth condition $\mu(B(p,r)) \leq Cr^{3}$ for $p \in \He$ and $r > 0$, and such that the support $\spt \mu$ has locally finite $3$-dimensional Hausdorff measure. If $\calR_{\He}$ is bounded on $L^{2}(\mu)$, then $\spt \mu$ is not removable for Lipschitz harmonic functions.
\end{thm}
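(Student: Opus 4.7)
The plan is to exhibit a Lipschitz function on $\He$ that is $\Delta_\He$-harmonic on $\He \setminus \spt\mu$ but not on all of $\He$; such a function witnesses that $\spt\mu$ is non-removable. The natural candidate is the sub-Laplace Newtonian potential. Let $\Phi(z) = c_0\|z\|^{-2}$ be the fundamental solution of $\Delta_\He$, so that $\Delta_\He \Phi = -c_1 \delta_{\mathbf{0}}$ and $\nabla_\He \Phi = c_0 \calK_\He$ (up to universal constants). Replace $\mu$ by $\mu' := \chi\mu$ for a smooth compactly supported cutoff $\chi$ with $\int \chi\,d\mu > 0$; the measure $\mu'$ inherits the $3$-growth and the $L^2$-boundedness of $\calR_\He$ and now has finite total mass. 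Define
\[ f(p) := \int \Phi(q^{-1} \cdot p)\,d\mu'(q). \]
A dyadic decomposition of $\mu'$ around $p$, together with the integrability of $\|z\|^{-2}$ against $3$-growth measures, shows that $f$ is bounded and continuous on $\He$, and the distributional identity $\Delta_\He f = -c_1 \mu'$ shows that $f$ is $\Delta_\He$-harmonic on $\He \setminus \spt\mu'$ but fails to be harmonic on $\He$, so we have a genuine candidate.

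The substantive step is to verify that $f$ is Lipschitz, equivalently that $\nabla_\He f = c_0 \calR_\He \mu'$ is uniformly bounded on $\He$ (interpreted via a principal value on $\spt\mu'$). The $3$-growth of $\mu'$ alone only gives a logarithmic bound for $\nabla_\He f$ as one approaches $\spt\mu'$; to obtain a genuine $L^\infty$ bound one has to extract cancellation from the $L^2(\mu')$-boundedness of $\calR_\He$. The standard device is a non-homogeneous Cotlar-type inequality dominating the maximal truncation $\sup_{\ve > 0}|\calR_{\He,\ve}\mu'(p)|$, pointwise for $\mu'$-a.e.~$p$, by
\[ M_{\mu'}\1(p) + (M_{\mu'}|\calR_\He \mu'|^{s})^{1/s}(p) \]
for some $s \in (0,1)$: the $3$-growth bounds the first term and the $L^2(\mu')$-hypothesis feeds the second, giving a uniform bound on $\calR_\He \mu'$ at $\mu'$-a.e.\ point of $\spt\mu'$. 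Passing to smooth truncations $\Phi_\ve$ with potentials $f_\ve = \Phi_\ve * \mu'$ (smooth on all of $\He$) and using a boundary-value analysis for $\Delta_\He$-harmonic functions on $\He \setminus \spt\mu'$ extends this to a uniform pointwise bound on $|\nabla_\He f_\ve|$. Taking a weak-$\ast$ limit as $\ve \to 0$ yields the required Lipschitz bound on $f$.

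The main obstacle I anticipate is the Cotlar step, because $\calK_\He$ is only \emph{horizontally} antisymmetric and so classical Calder\'on-Zygmund machinery does not transfer verbatim. However, $(\He, \|\cdot\|)$ is a geometrically doubling metric space, so the non-homogeneous harmonic-analytic framework of Nazarov, Treil, and Volberg adapts with essentially notational modifications, and the horizontal antisymmetry together with the Heisenberg Calder\'on-Zygmund estimates from \cite{CM} should suffice to close the argument. The remaining technicality -- rigorously identifying $\nabla_\He f$ with $c_0 \calR_\He \mu'$ across $\spt\mu'$ and propagating a $\mu'$-a.e.\ bound to a pointwise bound on $\He$ -- is handled by the smooth-truncation scheme together with the controlled boundary behavior of $\Delta_\He$-harmonic potentials.
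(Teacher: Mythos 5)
Your plan has the right overall shape --- build a sub-Laplace Newtonian potential whose horizontal gradient is $\calR_{\He}$ applied to a measure supported on $\spt\mu$, argue it is Lipschitz and harmonic off the support but not globally --- but there is a genuine gap in the key step, and it is precisely where the real work of the theorem lies. The Cotlar argument cannot give what you want. A non-homogeneous Cotlar inequality bounds the maximal truncation $\sup_\ve|\calR_{\He,\ve}\mu'(p)|$ pointwise by $M_{\mu'}\1(p) + (M_{\mu'}|\calR_\He\mu'|^s)^{1/s}(p)$, and while the first term is $\sim 1$ by $3$-growth, the second term is a maximal function of $|\calR_\He\mu'|^s$, so the $L^2(\mu')$ hypothesis on $\calR_\He$ only yields an $L^2(\mu')$ (or weak $L^1$) bound on the maximal truncation, never $L^\infty$. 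Indeed a uniform bound on $\calR_\He\mu'$ is simply false in general: even in the Euclidean model, the Cauchy transform of $\chi_{[0,1]}\,dx$ is $\log\frac{z}{z-1}$, unbounded despite the obvious $L^2$-boundedness of the Cauchy transform on the line, and the same phenomenon occurs here. The missing ingredient is the dualization step: from the uniform $L^2(\mu)$-boundedness one first deduces (Nazarov--Treil--Volberg) that the adjoint smooth truncations map $\mathcal{M}(\He) \to L^{1,\infty}(\mu)$ boundedly, and then applies the Davie--\O ksendal lemma to produce, for a compact $E \subset \spt\mu$ with $0 < \mu(E) < \infty$, a function $h$ with $0 \leq h \leq \chi_E$, $\int h\,d\mu \geq \mu(E)/2$, and $\|\tilde{\calR}^i_{\mu,\ve}h\|_\infty \leq 3C$. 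The potential one must use is $f = \Phi \ast (h\,d\mu)$, not $\Phi\ast\mu'$; the whole content is that a truncated sub-measure can have bounded Riesz potential when $\mu'$ itself cannot.

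There is a second gap, independent of the first. Even granted a uniform bound on $\nabla_\He f$ off $\spt\mu$, concluding that $f$ is Lipschitz across $\spt\mu$ is not routine in $\He$: unlike $\R^n$, you cannot simply integrate along straight lines that avoid a $3$-Hausdorff-finite set, because the connectivity of $\He$ is mediated by horizontal lines, whose geometry is genuinely different. The paper closes this with a Heisenberg-specific pair of lemmas: a horizontal polygonal quasiconvexity statement, and a slicing estimate $\int^\ast_{\W}\calH^{s-3}(E\cap\pi_\W^{-1}\{w\})\,d\calL^2(w)\lesssim\calH^s(E)$ for the non-Lipschitz projections $\pi_\W$ (Lemma 5.3), which together show that nearby points can be joined by short chains of horizontal segments meeting $\spt\mu$ in only finitely many points, whence $f$ is Lipschitz. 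Your ``boundary-value analysis for $\Delta_\He$-harmonic potentials'' is not substantiated and would have to contain something like this slicing argument to work; it is a piece of structure the authors single out as of independent interest, not a formality.
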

We prove Theorem \ref{mainRem} in Section \ref{sec:rem}; the argument is nearly the same as the one used by Mattila and Paramonov \cite{MR1372240} in the Euclidean case. There are a few subtle differences, however, so we provide all the details. The proof also requires an auxiliary result of some independent interest, on slicing a set in $\He$ by horizontal lines, see Lemma \ref{l:preImage}.

With Theorems \ref{mainIntro} and \ref{mainRem} in hand, the following corollaries are rather immediate:
\begin{cor}\label{cor1} Let $\alpha > 0$. Assume that $\phi \in C^{1,\alpha}(\W)$ is compactly supported. If $E$ is a closed subset of the intrinsic graph of $\phi$ with positive $3$-dimensional Hausdorff measure, then $E$ is not removable.
\end{cor}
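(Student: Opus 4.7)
The plan is to combine Theorems \ref{mainIntro} and \ref{mainRem}, applied to the restriction measure $\mu := \calH^3|_E$. Write $\Gamma$ for the intrinsic graph of $\phi$ and $\nu := \calH^3|_\Gamma$. The compact support of $\phi \in C^{1,\alpha}(\W)$ ensures that $\Gamma$ is a bounded subset of $\He$ and that $\nu$ is $3$-Ahlfors-David regular (this already holds for intrinsic Lipschitz graphs, \cite{MR2287539}); in particular $\nu(B(p,r)) \leq Cr^{3}$ for all $p \in \He$ and $r > 0$. The measure $\mu = \1_E \, \nu$ is then non-trivial (since $\calH^{3}(E) > 0$), inherits the upper $3$-growth $\mu(B(p,r)) \leq Cr^{3}$, and is supported in $\Gamma$, which has finite $3$-dimensional Hausdorff measure.

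Theorem \ref{mainIntro} yields that $\calR_\He$ is $L^{2}(\nu)$-bounded. To transfer the bound to $L^{2}(\mu)$, we use the standard restriction trick: for every $\varepsilon > 0$ and every $f \in L^{2}(\mu)$, the truncated operators satisfy $\calR_{\He,\varepsilon}^{\mu} f(p) = \calR_{\He,\varepsilon}^{\nu}(\1_E f)(p)$ pointwise on $\He$, because $d\mu = \1_E \, d\nu$. Restricting to $p \in E$ gives
\begin{displaymath}
\|\calR_{\He,\varepsilon}^{\mu} f\|_{L^{2}(\mu)} \leq \|\calR_{\He,\varepsilon}^{\nu}(\1_E f)\|_{L^{2}(\nu)} \leq C \|\1_E f\|_{L^{2}(\nu)} = C \|f\|_{L^{2}(\mu)},
\end{displaymath}
uniformly in $\varepsilon$. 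Hence $\mu$ meets all hypotheses of Theorem \ref{mainRem}, so $\spt \mu$ is not removable for Lipschitz harmonic functions.

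It remains to lift non-removability from $\spt \mu$ to $E$. The argument of Section \ref{sec:rem}, modelled on \cite{MR1372240}, produces a globally defined Lipschitz function $f \colon \He \to \R$ that is $\Delta_{\He}$-harmonic on $\He \setminus \spt \mu$ but not on $\He$. Since $\spt \mu \subset E$, the inclusion $\He \setminus E \subset \He \setminus \spt \mu$ forces $f$ to be harmonic on $\He \setminus E$ as well, so the same $f$ witnesses the non-removability of $E$ (take the ambient open set $D := \He$).

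The only genuinely non-automatic step is the last one, where we need the witness delivered by Theorem \ref{mainRem} to be defined on all of $\He$ so that non-removability is monotone under inclusion of closed sets. Everything else is routine: $3$-Ahlfors-David regularity of a compactly supported intrinsic $C^{1,\alpha}$-graph is classical, the passage from $L^{2}(\nu)$- to $L^{2}(\mu)$-boundedness is the standard observation that a convolution type CZO bounded with respect to $\nu$ remains bounded with respect to any submeasure of $3$-growth, and the growth and support hypotheses on $\mu$ are immediate from AD-regularity of $\nu$.
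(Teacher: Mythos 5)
Your argument follows the paper's route --- $L^{2}(\calH^{3}|_{\Gamma})$-boundedness from Theorem~\ref{mainIntro}, restriction to $\mu = \calH^{3}|_{E}$, then Theorem~\ref{mainRem} --- and is correct; you even plug a small gap the paper glosses over by noting that, since $\spt\mu$ may be a proper subset of $E$, one must use that the witness function produced in the proof of Theorem~\ref{mainRem} is defined on all of $\He$, so it also certifies non-removability of the larger closed set $E$. One minor slip: the graph $\Gamma$ of a compactly supported $\phi$ is \emph{not} bounded (it coincides with $\W$ outside $\spt\phi$) and so has only \emph{locally} finite $\calH^{3}$-measure, but that is exactly what Theorem~\ref{mainRem} asks for, so nothing breaks.
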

\begin{cor}\label{cor2} Let $\alpha > 0$. Assume that $\Omega \subset \R^{2} \cong \W$ is open, and $\phi$ is Euclidean $\calC^{1,\alpha}$ on $\Omega$. If $E$ is a closed subset of the intrinsic graph of $\phi$ over $\W$ with positive $3$-dimensional Hausdorff measure, then $E$ is not removable.
\end{cor}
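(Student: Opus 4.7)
The plan is to reduce Corollary \ref{cor2} to Corollary \ref{cor1} by cutting off $\phi$ with a compactly supported Euclidean bump in $\W \cong \R^{2}$, and then invoking the hereditary property of removability. The two ingredients are Remark \ref{e:euclideanHolder}, which says that compactly supported Euclidean $\calC^{1,\alpha}$ functions on $\W$ fit the intrinsic class of Definition \ref{d:C1,alpha,W}, and the standard fact that closed subsets of removable sets are removable.

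For the localization, let $\pi_{\W} \colon \He \to \W$ denote the continuous projection along the horizontal subgroup $\V$ complementary to $\W$; since $\Gamma_{\phi} = \{w \cdot \phi(w) : w \in \Omega\}$ one has $\pi_{\W}(E) \subset \Omega$. Pick $p_{0} \in E$ with $\calH^{3}(E \cap B(p_{0}, r)) > 0$ for every $r > 0$ (such a point exists since $\calH^{3}(E) > 0$), and set $w_{0} := \pi_{\W}(p_{0}) \in \Omega$. Choose $\rho > 0$ so small that the closed Euclidean disk of radius $2\rho$ centred at $w_{0}$ lies in $\Omega$; by continuity of $\pi_{\W}$ there is then $r_{0} > 0$ with $\pi_{\W}(B(p_{0}, r_{0}))$ contained in the open Euclidean $\rho$-disk about $w_{0}$. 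Put $K := E \cap \overline{B(p_{0}, r_{0}/2)}$, a compact subset of $E$ with $\calH^{3}(K) > 0$ and $\pi_{\W}(K)$ contained in the closed Euclidean $\rho$-disk. Let $\chi \colon \W \to [0, 1]$ be a smooth Euclidean cutoff equal to $1$ on the closed $\rho$-disk and supported in the open $2\rho$-disk around $w_{0}$, and define $\tilde{\phi} := \chi \phi$ (extended by $0$ outside $\Omega$). Then $\tilde \phi$ is compactly supported and Euclidean $\calC^{1,\alpha}$ on $\W$, and agrees with $\phi$ on $\pi_{\W}(K)$. Consequently every $p = w \cdot \phi(w) \in K$ also equals $w \cdot \tilde \phi(w)$, so $K \subset \Gamma_{\tilde \phi}$. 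By Remark \ref{e:euclideanHolder}, $\tilde \phi$ belongs to the class handled by Corollary \ref{cor1}, which therefore yields non-removability of $K$.

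It remains to propagate non-removability from $K$ to $E \supset K$. This is not entirely immediate from the open-domain formulation of removability stated in the introduction, because an open set $D$ witnessing non-removability of $K$ need not contain all of $E$. However, the standard equivalent whole-space characterization -- $E$ is removable iff every bounded Lipschitz function on $\He$ which is $\Delta_{\He}$-harmonic on $\He \setminus E$ is $\Delta_{\He}$-harmonic on all of $\He$ -- makes the implication transparent, since $\He \setminus E \subset \He \setminus K$ implies that any witness to the non-removability of $K$ is also a witness for $E$. Recording this equivalence cleanly (which is standard in the theory, cf.\ \cite{CM}) is the only step I expect to require any real care; all other parts of the argument are routine.
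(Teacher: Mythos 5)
Your argument is correct and follows the same strategy as the paper: cut $\phi$ down to a compactly supported Euclidean $\calC^{1,\alpha}$ function that agrees with $\phi$ near a compact positive-measure piece of $E$, pass to the intrinsic class via Remark~\ref{e:euclideanHolder}, and invoke Corollary~\ref{cor1}. The localization you build (around a density point $p_0$, with nested balls and a bump supported in a small disk) is more elaborate than the paper's one-liner ``replace $E$ by a compact subset of positive measure'', but it accomplishes the same thing and is fine.

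On the step you flag as the delicate one -- propagating non-removability from the compact piece $K$ to $E \supset K$ -- you are right that Definition~\ref{d:rem} does not make this monotonicity obvious on its face (and the paper glosses over the same point when it reduces to a compact subset). However, you do not need the full two-sided ``whole-space characterization'' of removability, and invoking it as an equivalence is heavier than required: the nontrivial direction of that equivalence is exactly what would need a Whitney-type extension argument. What actually saves the day is that the non-removability of $K$ obtained from Corollary~\ref{cor1} is, by inspection of the proof of Theorem~\ref{mainRem2}, witnessed \emph{globally}: one produces a Lipschitz $f \colon \He \to \R$ (of the form $f = \Phi * \nu$ with $\spt \nu \subset K$) which is $\Delta_{\He}$-harmonic on $\He \setminus K$ but not on $\He$. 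Since $\He \setminus E \subset \He \setminus K$, the same $f$ is harmonic on $\He \setminus E$, and taking $D = \He$ in Definition~\ref{d:rem} shows $E$ is not removable. This only uses the trivial implication ``global witness $\Rightarrow$ not removable'', so no appeal to an equivalence theorem is needed. With this small simplification your proof is complete and matches the paper's.
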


The structure of the paper is the following. In Section \ref{s:Defs}, we introduce all the relevant concepts, from singular integrals to (intrinsic) $C^{1,\alpha}$ functions, and prove some simple lemmas. In Section \ref{s:mainProof}, we prove Theorem \ref{mainIntro}. In Section \ref{s:regularity}, we study how well the (intrinsic) graphs of $C^{1,\alpha}$ functions are approximated by vertical planes; this analysis is required in the proof of Theorem \ref{mainIntro}. Finally, in Section \ref{sec:rem}, we study the connection with the removability problem, and prove Theorem \ref{mainRem} and Corollaries \ref{cor1} and \ref{cor2}.

\medskip

\textbf{Acknowledgements.} We thank Lingxiao Zhang for pointing
out a mistake in the proof of an earlier version of Proposition \ref{p:improvedRegularity}.

\section{Definitions and preliminaries}\label{s:Defs}

\subsection{The Heisenberg group and general notation}\label{s:heisenbergGroup}
The \emph{first Heisenberg group} $\He$ is $\mathbb{R}^3$ endowed with the group law
\begin{equation}\label{eq:group_law} z_{1} \cdot z_{2} = (x_{1} + x_{2},y_{1} + y_{2}, t_{1} + t_{2} + \tfrac{1}{2}[x_{1}y_{2} - x_{2}y_{1}]), \end{equation}
for $z_{i} = (x_{i},y_{i},t_{i}) \in \He$. The neutral element in these coordinates is given by $\mathbf{0}=(0,0,0)$ and the inverse of $p=(x,y,t)$ is denoted by $p^{-1}$ and given by $(-x,-y,-t)$. The \emph{Kor\'{a}nyi distance} is defined as
\begin{equation}\label{d:theMetric}
d(z_1,z_2):=\|(z_2)^{-1}\cdot z_1\|,\quad z_1,z_2\in \He,
\end{equation}
where
\begin{equation}\label{d:koranyiNorm}
\|(x,y,t)\|:= \sqrt[4]{(x^2+y^2)^2+16 t^2},\;\quad\text{for }  (x,y,t)\in \He^1.
\end{equation}
A frame for the left invariant vector fields is given by
\begin{displaymath}
X= \partial_x - \tfrac{y}{2}\partial_t,\quad Y= \partial_y+\tfrac{x}{2}\partial_t,\quad\text{and}\quad T=\partial_t.
\end{displaymath}
The \emph{horizontal gradient} of a function $u: \Omega \to \mathbb{R}$ on an open set $\Omega \subseteq \mathbb{H}$ is
\begin{displaymath}
\nabla_{\mathbb{H}} u = (Xu)X + (Yu)Y.
\end{displaymath}
and the \emph{sub-Laplacian} of $u$ is
\begin{equation}\label{eq:subLapl} \Delta_{\He} u = X^{2}u + Y^{2}u. \end{equation}
We consider the horizontal gradient as a mapping with values in $\mathbb{R}^2$ and write $\nabla_{\mathbb{H}} u =(Xu,Yu)$.
For a thorough introduction to the Heisenberg group, we refer the reader to
Chapter 2 of the monograph
\cite{MR2312336}.

\subsubsection{Notation} We usually denote points of $\He$ by $z,p$ or $q$; in coordinates, we often write $z = (x,y,t)$ with $x,y,t \in \R$. Points on vertical subgroups $\W \subset \He$ (see Section \ref{s:intrinsicLipschitz}) are typically denoted by $w$.

Unless otherwise specified, all metric concepts in the paper, such as the diameter and distance of sets, are defined using the metric $d$ given in \eqref{d:theMetric}. The notation $|\cdot |$ refers to Euclidean norm, and $\|\cdot\|$ refers to the quantity defined in \eqref{d:koranyiNorm}. A closed ball in $(\He,d)$ of radius $r > 0$ and centre $z \in \He$ is denoted by $B(z,r)$.

For $A,B > 0$, we use the notation $A \lesssim_{h} B$ to signify that there exists a constant $C \geq 1$, depending only on the parameter "$h$", such that $A \leq CB$. If no "$h$" is specified, the constant $C$ is absolute. We abbreviate the two-sided inequality $A \lesssim_{h} B \lesssim_{h} A$ by $A \sim_{h} B$.

The notation $\calH^{s}$ stands for the $s$-dimensional Hausdorff measure (with respect to the metric $d$), and Lebesgue measure on $\R^{2}$ is denoted by $\calL^{2}$; this notation is also used to denote Lebesgue measure on the subgroups $\W$ under the identification $\W \cong \R^{2}$.

\subsection{Kernels and singular integral operators in $\He$}\label{ss:KernelsAndSIO} An aim of the paper is to study the $L^{2}$ boundedness of singular integral operators (SIOs) on fairly smooth $3$-Ahlfors-David regular surfaces in $\He$ (see Section \ref{s:C1alphaFunctions} for a more precise description of our surfaces). But what are these SIOs -- and what are their kernels? In this paper, a \emph{kernel} is any continuous function $K \colon \He \setminus \{\mathbf{0}\} \to \R^{d}$. Motivated by similar considerations in Euclidean spaces, it seems reasonable to impose the following growth and H\"older continuity estimates:
\begin{equation}\label{form0} |K(z)| \lesssim \frac{1}{\|z\|^{3}} \quad \text{and} \quad |K(z_{1}) - K(z_{2})| \lesssim \frac{\|z_{2}^{-1} \cdot z_{1}\|^{\beta}}{\|z_{1}\|^{3 + \beta}}, \end{equation}
for some $\beta \in (0,1]$, and for all $z \in \He \setminus \{\mathbf{0}\}$ and $z_{1},z_{2}\in \He \setminus \{\mathbf{0}\}$ with $d(z_{1},z_{2}) \leq \|z_{1}\|/2$. We call such kernels \emph{$3$-dimensional Calder\'on-Zygmund (CZ) kernels in $\He$}. The conditions above, and the lemma below, imply that our terminology is consistent with standard terminology, see for instance p. 293 in Stein's book \cite{stein1993harmonic}.
\begin{lemma}\label{l:standardHolder} Assume that a kernel $K \colon \He \setminus \{\mathbf{0}\} \to \R^{d}$ satisfies the second (H\"older continuity) estimate in \eqref{form0} for some $\beta > 0$. Then,
\begin{equation}\label{form42} |K(q^{-1} \cdot p_{1}) - K(q^{-1} \cdot p_{2})| + |K(p_{1}^{-1} \cdot q) - K(p_{2}^{-1} \cdot q)| \lesssim \frac{\|p_{2}^{-1} \cdot p_{1}\|^{\beta/2}}{\|q^{-1}\cdot p_{1}\|^{3 + \beta/2}} \end{equation}
for $q\in \He$, $p_{1},p_{2} \in \He \setminus\{q\}$ with $d(p_{1},p_{2}) \leq d(p_{1},q)/2$.
\end{lemma}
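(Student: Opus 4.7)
The plan is to apply the H\"older hypothesis \eqref{form0} to each of the two differences in \eqref{form42}, using suitable substitutions. The first difference is routine; the second requires an auxiliary conjugation estimate for the Kor\'anyi norm, which is precisely what forces the degradation of the H\"older exponent from $\beta$ to $\beta/2$.

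\emph{First difference.} I would set $z_i := q^{-1} \cdot p_i$, so that $z_2^{-1} z_1 = p_2^{-1} p_1$ and $\|z_1\| = \|q^{-1} p_1\|$. The assumption $d(p_1,p_2) \leq d(p_1,q)/2$ is exactly $\|z_2^{-1} z_1\| \leq \|z_1\|/2$, so \eqref{form0} applies and yields
\[
|K(q^{-1} p_1) - K(q^{-1} p_2)| \lesssim \frac{\|p_2^{-1} p_1\|^{\beta}}{\|q^{-1} p_1\|^{3+\beta}} \leq \frac{\|p_2^{-1} p_1\|^{\beta/2}}{\|q^{-1} p_1\|^{3+\beta/2}},
\]
the final step using $\|p_2^{-1} p_1\| \leq \|q^{-1} p_1\|$.

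\emph{Second difference.} Setting $u_i := p_i^{-1} \cdot q$, a direct computation gives $u_2 u_1^{-1} = p_2^{-1} p_1$, whence $\|u_2 u_1^{-1}\| = \|u_1 u_2^{-1}\| = \|p_2^{-1} p_1\|$ via $\|g^{-1}\| = \|g\|$. The difficulty is that the quantity relevant for \eqref{form0} is $\|u_2^{-1} u_1\|$, which in the non-abelian group $\He$ does \emph{not} equal $\|u_2 u_1^{-1}\|$. The identity
\[
u_2^{-1} u_1 = u_1^{-1} \cdot (u_1 u_2^{-1}) \cdot u_1
\]
exhibits $u_2^{-1} u_1$ as a conjugate of $u_1 u_2^{-1}$, so the key input is the quantitative conjugation estimate
\[
\|w^{-1} g w\| \lesssim \|g\| + \sqrt{\|g\|\cdot \|w\|}, \qquad g,w \in \He.
\]
This follows by a one-line calculation from \eqref{eq:group_law}: for $w = (a,b,c)$ and $g = (x,y,t)$ one has $w^{-1} g w = (x, y, t + bx - ay)$, and the bound comes from $|x|, |y| \leq \|g\|$ together with $|a|, |b| \leq \|w\|$. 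Applied with $g = u_1 u_2^{-1}$ and conjugator $u_1$, and using $\|p_2^{-1} p_1\| \leq \|q^{-1} p_1\|$ to absorb the non-square-root summand, this yields
\[
\|u_2^{-1} u_1\| \lesssim \sqrt{\|p_2^{-1} p_1\| \cdot \|q^{-1} p_1\|}.
\]

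To conclude I would split into two cases. If $\|u_2^{-1} u_1\| \leq \|u_1\|/2$, then \eqref{form0} applies directly to $u_1, u_2$, and substituting the previous bound turns $\|u_2^{-1} u_1\|^{\beta}$ into $(\|p_2^{-1} p_1\| \|q^{-1} p_1\|)^{\beta/2}$; one factor of $\|q^{-1} p_1\|^{\beta/2}$ cancels against the denominator to produce exactly the right-hand side of \eqref{form42}, so the square root in the conjugation estimate is precisely the source of the $\beta/2$. Otherwise $\|u_2^{-1} u_1\| > \|u_1\|/2$, which combined with the bound $\|u_2^{-1} u_1\| \lesssim \sqrt{\|p_2^{-1} p_1\|\|u_1\|}$ forces $\|q^{-1} p_1\| \lesssim \|p_2^{-1} p_1\|$. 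Since $\|u_2\| \geq \|u_1\|/2$ by the triangle inequality and $d(p_1,p_2) \leq d(p_1,q)/2$, the growth estimate in \eqref{form0} gives $|K(u_1) - K(u_2)| \lesssim \|q^{-1} p_1\|^{-3}$, and trading $\|q^{-1} p_1\|^{-\beta/2}$ for $\|p_2^{-1} p_1\|^{\beta/2}$ via the case hypothesis delivers the target bound. The main obstacle is the conjugation estimate; the loss of a square root therein is the essential reason the final exponent is $\beta/2$ rather than $\beta$.
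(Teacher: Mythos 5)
Your argument runs along essentially the same lines as the paper's. Writing $z_i := q^{-1}\cdot p_i$, so that $u_i = z_i^{-1}$, the paper also reduces the second difference to comparing $\|z_2\cdot z_1^{-1}\|$ (your $\|u_2^{-1}u_1\|$) with $d(z_1,z_2)$ and $\|z_1\|$, and arrives at precisely the bound $\lesssim d(z_1,z_2) + \sqrt{d(z_1,z_2)\,\|z_1\|}$. The paper derives this by a direct coordinate computation; your identity $u_2^{-1}u_1 = u_1^{-1}(u_1u_2^{-1})u_1$ together with the conjugation estimate $\|w^{-1}gw\| \lesssim \|g\| + \sqrt{\|g\|\,\|w\|}$ repackages the same computation in a slightly more structural form. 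It is a tidier presentation, not a different method, and the source of the loss $\beta\to\beta/2$ is identified identically.

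The genuine gap is in your Case~2, where you invoke ``the growth estimate in \eqref{form0}''. The lemma hypothesises \emph{only} the H\"older estimate, not the size bound $|K(z)|\lesssim\|z\|^{-3}$, so you are not entitled to conclude $|K(u_1)-K(u_2)|\lesssim\|q^{-1}p_1\|^{-3}$ this way; a constant kernel, for instance, satisfies the H\"older estimate but not the growth bound. In this regime one has $\|u_1\|\sim\|u_2\|$ and $d(u_1,u_2)\lesssim\|u_1\|$ but possibly $d(u_1,u_2)>\|u_1\|/2$, so the H\"older estimate cannot be applied directly either. The correct fix, using only what is assumed, is chaining: pick boundedly many intermediate points $\zeta_0=u_1,\ldots,\zeta_m=u_2$ with $\|\zeta_i\|\sim\|u_1\|$ and $d(\zeta_i,\zeta_{i+1})\leq\|\zeta_i\|/2$, apply the H\"older estimate to each consecutive pair, and sum, giving $|K(u_1)-K(u_2)|\lesssim\|u_1\|^{-3}$, which suffices since in Case~2 the target right-hand side is $\gtrsim\|u_1\|^{-3}$. (The paper's own reduction to $d(z_1,z_2)\leq\|z_1\|/C$ is terse on this point too, but has to be read with the same chaining device, which the paper spells out explicitly a little later when it applies the lemma in the proof of the main theorem.) With that replacement your proof closes correctly.
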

\begin{proof} Write $z_{1} := q^{-1} \cdot p_{1}$ and $z_{2} := q^{-1} \cdot p_{2}$. Then $d(z_{1},z_{2}) \leq \|z_{1}\|/2$ by left-invariance of $d$, so the first summand in \eqref{form42} has the correct bound by \eqref{form0}, even with $\beta/2$ replaced by $\beta$. Hence, to find a bound for the second summand, we only need to prove that
\begin{displaymath} |K(z_{1}^{-1}) - K(z_{2}^{-1})| \lesssim \frac{\|z_{2}^{-1} \cdot z_{1}\|^{\beta/2}}{\|z_{1}\|^{3 + \beta/2}}. \end{displaymath}
We may moreover assume that $d(z_{1},z_{2}) \leq \|z_{1}\|/C$ for a suitable large constant $C \geq 1$. We would like to apply \eqref{form0} as follows,
\begin{equation}\label{form43} |K(z_{1}^{-1}) - K(z_{2}^{-1})| \lesssim \frac{\|z_{2} \cdot z_{1}^{-1}\|^{\beta}}{\|z_{1}\|^{3 + \beta}}, \end{equation}
but we first need to make sure that $d(z_{1}^{-1},z_{2}^{-1}) \leq \|z_{1}\|/2$. Write $z_{1} = (x_{1},y_{1},t_{1})$ and $z_{2} = (x_{2},y_{2},t_{2})$, and observe that
\begin{align} d(z_{1}^{-1},z_{2}^{-1}) = \|z_{2} \cdot z_{1}^{-1}\| & = \|(x_{2} - x_{1},y_{2} - y_{1},t_{2} - t_{1} - \tfrac{1}{2}[x_{2}y_{1} - y_{2}x_{1}]\| \notag\\
& \lesssim \|(x_{2} - x_{1},y_{2} - y_{1},t_{2} - t_{1} + \tfrac{1}{2}[x_{2}y_{1} - y_{2}x_{1}]\| \notag\\
&\qquad + \sqrt{|x_{2}y_{1} - x_{1}y_{2}|} \notag\\
& = d(z_{1},z_{2}) + \sqrt{|(x_{2} - x_{1})y_{1} - (y_{2} - y_{1})x_{1}|} \notag\\
&\label{form44} \lesssim d(z_{1},z_{2}) + \sqrt{d(z_{1},z_{2})}\sqrt{\|z_{1}\|}. \end{align}
It follows from \eqref{form44} that $d(z_{1}^{-1},z_{2}^{-1}) \leq \|z_{1}\|/2$, if the constant $C$ was chosen large enough. Hence, the estimate \eqref{form43} is legitimate, and we may further use \eqref{form44} obtain
\begin{displaymath} |K(z_{1}^{-1}) - K(z_{2}^{-1})| \lesssim \frac{\|z_{2}^{-1} \cdot z_{1}\|^{\beta}}{\|z_{1}\|^{3 + \beta}} + \frac{\|z_{2}^{-1} \cdot z_{1}\|^{\beta/2}\|z_{1}\|^{\beta/2}}{\|z_{1}\|^{3 + \beta}} \lesssim \frac{\|z_{2}^{-1} \cdot z_{1}\|^{\beta/2}}{\|z_{1}\|^{3 + \beta/2}}, \end{displaymath}
as claimed. \end{proof}

We now recall some basic notions about SIOs. Fix a $3$-dimensional Calder\'on-Zygmund kernel $K \colon \He \setminus \{\mathbf{0}\} \to \R^{d}$, and a complex Radon measure $\nu$. For $\epsilon > 0$, we define
\begin{displaymath} T_{\epsilon}\nu(p) := \int_{\|q^{-1} \cdot p\| > \epsilon} K(q^{-1} \cdot p) \, d\nu (q), \qquad p \in \He, \end{displaymath}
whenever the integral on the right hand side is absolutely convergent; this is, for instance, the case if $\nu$ has finite total variation. Next, fix a positive Radon measure $\mu$ on $\He$ satisfying the growth condition
\begin{equation}\label{muGrowth} \mu(B(p,r)) \leq Cr^{3}, \qquad p \in \He, \: r > 0, \end{equation}
where $C \geq 1$ is a constant. Given a complex function $f \in L^{2}(\mu)$ and $\epsilon > 0$, we define
\begin{displaymath} T_{\mu,\epsilon}f(p) := T_{\epsilon}(f \, d\mu)(p), \qquad p \in \He. \end{displaymath}
It easily follows from the growth conditions on $K$ and $\mu$, and Cauchy-Schwarz inequality, that the expression on the right makes sense for all $\epsilon > 0$.
\begin{definition}\label{d:SIOBoundedness} Given a $3$-dimensional CZ kernel $K$, and a measure $\mu$ satisfying \eqref{muGrowth}, we say that \emph{the SIO $T$ associated to $K$ is bounded on $L^{2}(\mu)$}, if the operators
\begin{displaymath} f \mapsto T_{\mu,\epsilon}f \end{displaymath}
are bounded on $L^{2}(\mu)$ with constants independent of $\epsilon > 0$.
\end{definition}

For the rest of the paper, we are mainly concerned with measures $\mu$ satisfying the $2$-sided inequality $c r^{3} \leq \mu(B(p,r)) \leq C r^{3}$ for all $p \in \spt \mu$ and $0 < r \leq \diam(\spt \mu)$, and for some fixed constants $0 < c < C < \infty$. Such measures are called \emph{$3$-Ahlfors-David regular}, or \emph{$3$-ADR} in short.

\begin{rem}
\label{r:adjoint} Given a $3$-dimensional $CZ$ kernel $K$,  the kernel $K^{\ast}$, defined by $K^{\ast}(p) := K(p^{-1})$, is the kernel of the formal adjoint $T_{\mu,\epsilon}^{\ast}$ of $T_{\mu,\epsilon}$ since
\begin{align*} \int (T_{\mu,\epsilon}f) g \, d\mu & = \int \left( \int_{\|q^{-1} \cdot p\| > \epsilon} K(q^{-1} \cdot p)f(q) \, d\mu (q) \right) g(p) \, d\mu (p)\\
& = \int \left( \int_{\|p^{-1} \cdot q\| > \epsilon} K^{\ast}(p^{-1} \cdot q) g(p) \, d\mu (p) \right) f(q) \, d\mu (q) = \int (T^{\ast}_{\mu,\epsilon}g)f \, d\mu. \end{align*}
It is easy to check that $K^{\ast}$  satisfies the growth condition in \eqref{form0}. Moreover, $K^{\ast}$ satisfies the H\"older continuity requirement in \eqref{form0} with exponent $\beta/2$; this is a corollary of Lemma \ref{l:standardHolder}. 
\end{rem}

\subsubsection{Two examples}\label{s:exampleKernels} In this short section, we give two examples of concrete $3$-dimensional CZ kernels.

\begin{ex}[The $3$-dimensional $\He$-Riesz kernel] Consider the kernel
\begin{equation}\label{rieszKernel} \mathcal{K}(z) = \nabla_{\He} \|z\|^{-2}, \qquad z \in \He \setminus \{0\}. \end{equation}
Note that $\|\cdot\|^{-2}$ agrees (up to a multiplicative constant) with the fundamental solution of the sub-Laplacian $\Delta_{\He}$, as proved by Folland \cite{MR0315267}, see also \cite[Example 5.4.7]{BLU}.
We call $\mathcal{K}$ the \emph{$3$-dimensional $\He$-Riesz kernel}; it gives rise to a SIO $\calR$, which we call the \emph{$3$-dimensional $\He$-Riesz transform}. Studying the $L^{2}$-boundedness of $\calR$ on subsets of $\He$ is connected with the removability of these sets for Lipschitz harmonic functions on $\He$, see Theorem \ref{mainRem2} for the precise statement.

The neat formula \eqref{rieszKernel} can be expanded to the following rather unwieldy expression:
\begin{equation}\label{KExplicit} \mathcal{K}(x,y,t) = \left(\frac{-2x|(x,y)|^{2} + 8yt}{\|(x,y,t)\|^{6}}, \frac{-2y|(x,y)|^{2} - 8xt}{\|(x,y,t)\|^{6}} \right) =: (\mathcal{K}^{1}(x,y,t),\mathcal{K}^{2}(x,y,t)). \end{equation}
From the formula above, one sees that $\mathcal{K}$ is not antisymmetric in the usual sense $\mathcal{K}(z^{-1}) = -\mathcal{K}(z)$; for instance, $\mathcal{K}^{1}(0,1,1) = \mathcal{K}^{1}(0,-1,-1)$. However, both components of $\mathcal{K}$ are \emph{horizontally antisymmetric}, as in the definition below. \end{ex}

\begin{definition}[Horizontal antisymmetry] A kernel $K \colon \He \setminus \{\mathbf{0}\} \to \R$ is called \emph{horizontally antisymmetric}, if
\begin{displaymath} K(x,y,t) = -K(-x,-y,t), \qquad (x,y,t) \in \He \setminus \{\mathbf{0}\}. \end{displaymath}
\end{definition}
It is clear from the formula \eqref{KExplicit} that $\mathcal{K}$ is horizontally antisymmetric.

\begin{ex}[The $3$-dimensional quasi $\He$-Riesz kernel]\label{s:qHRieszKernel}

Consider
\begin{equation}
\label{quasiriesz}
\Omega (x,y,t):= \left( \frac{x}{\|(x,y,t)\|^4}, \frac{y}{\|(x,y,t)\|^4}, \frac{t}{\|(x,y,t)\|^5}\right), \qquad (x,y,t) \neq (0,0,0).
\end{equation}
It is easy to see that ${\Omega}$ is a $3$-dimensional CZ kernel which is \emph{antisymmetric} in the sense that $\Omega(p)=-\Omega(p^{-1})$ for all $p\in \He \setminus \{0\}$. We will call $\Omega$ the {\em $3$-dimensional quasi $\He$-Riesz kernel}; it defines the  {\em $3$-dimensional quasi $\He$-Riesz transform} $\mathcal{Q}$.  The kernel $\Omega$, which resembles in form the Euclidean Riesz kernels, was introduced in \cite{Chousionis2011}. It was proved there that if $\mu$ is a $3$-ADR measure and $\mathcal{Q}$ is bounded in $L^2(\mu)$ then $\spt \mu$ can be approximated at $\mu$ almost every point and at arbitrary small scales by homogeneous subgroups. It is unknown if the $\He$-Riesz transform has the same property. \end{ex}

\subsubsection{Cancellation conditions}\label{s:cancel} Fix a $3$-dimensional CZ kernel $K \colon \He \setminus \{0\} \to \R$. Without additional assumptions, the SIO $T$ associated with $K$ is generally not bounded on $L^{2}(\mu)$, even when $\mu$ is nice, such as the $3$-dimensional Hausdorff measure on a vertical plane $\W$ (see Section \ref{s:intrinsicLipschitz} for a definition of these planes), which is a constant multiple of Lebesgue measure on $\W$. So, for positive results, one needs to impose \emph{cancellation conditions}. The \emph{horizontal antisymmetry} or \emph{antisymmetry} would be such conditions, but neither of them holds both for the $\He$-Riesz kernel and the quasi $\He$-Riesz kernel simultaneously.  Here is a more general cancellation condition, which encompasses antisymmetric and horizontally antisymmetric kernels:
\begin{df}[AB]\label{abc}
A kernel $K: \He \setminus \{\mathbf{0}\} \ra \R$ satisfies the {\em annular boundedness} condition (AB for short) if the following holds. For every every $\|\cdot\|$-radial $\calC^{\infty}$ function $\psi : \He \ra \R$ satisfying $\chi_{B(0,1/2)} \leq \psi \leq \chi_{B(0,2)}$, there exists a constant $A_{\psi} \geq 1$ such that
\begin{equation}
\label{vabann} \left|\int_{\W} [\psi^{R}(w) - \psi^{r}(w)]K(w) \, d\calL^{2}(w)\right| \leq A_{\psi} \end{equation}
for all $0 < r < R < \infty$, and for all vertical planes $\W$. Above,
\begin{displaymath} \psi^{r}(z) := (\psi \circ \delta_{1/r})(z),
\end{displaymath}
where $\delta_{r}$ is the ($\|\cdot\|$-homogeneous) dilatation $\delta_{r}(x,y,t) = (rx,ry,r^{2}t)$ for $(x,y,t) \in \He$.
\end{df}
It turns out that the AB condition for $3$-dimensional CZ kernels $K$ is equivalent to the following condition:

\begin{definition}[UBVP]
\label{univert} Given a kernel $K \colon \He \setminus \{\mathbf{0}\} \to \R$ with $\|K(z)\| \lesssim \|z\|^{-3}$, we say that it is \emph{uniformly $L^2$ bounded on vertical planes} (UBVP in short), if  the SIO  associated to $K$  is bounded on $L^{2}(\calH^{3}|_{\W})$ for every vertical plane $\W$ (in the sense of Definition \ref{d:SIOBoundedness}), with constants independent of $\W$. \end{definition}

The measure $\calH^{3}|_{\W}$ is $3$-ADR, so it makes sense to discuss boundedness of $T$ on $L^{2}(\calH^{3}|_{\W})$. The following lemma is an analog of \cite[Proposition 2, p. 291]{stein1993harmonic}.

\begin{lemma} Assume that a kernel $K \colon \He \setminus \{\mathbf{0}\} \to \R$ with $\|K(z)\| \lesssim \|z\|^{-3}$ satisfies the UBVP condition. Then $K$ also satisfies the AB condition. \end{lemma}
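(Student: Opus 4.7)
The strategy I would pursue exploits the fact that on a vertical plane $\W$, the group law $q^{-1}\cdot p = p-q$ collapses to Euclidean addition, so the truncated SIO on $\W$ is an ordinary convolution on $\W \cong \R^{2}$, and Plancherel is available. The plan is first to prove the ``sharp'' version of AB, namely
\begin{equation}\label{eq:sharpAB} \left|\int_{\W} \chi_{\{r < \|z\| \leq R\}}(z)\, K(z) \, d\calL^{2}(z)\right| \lesssim 1 \end{equation}
uniformly in $0 < r < R$ and in vertical planes $\W$, and then to deduce the smooth AB condition by a direct size estimate on the smoothing error.

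To establish \eqref{eq:sharpAB}, I would set $K_{r,R} := K \cdot \chi_{\{r < \|\cdot\| \leq R\}}$ and observe that, on $\W \cong \R^{2}$, the operator $T_{\calH^{3}|_{\W}, r} - T_{\calH^{3}|_{\W}, R}$ is exactly (a constant multiple of) Euclidean convolution with $K_{r,R}$. Since $K_{r,R}$ is bounded with compact support avoiding $\mathbf{0}$, it belongs to $L^{1}(\W)\cap L^{\infty}(\W)$, so Plancherel identifies the $L^{2}(\calL^{2})$-operator norm of this convolution with $\|\widehat{K_{r,R}}\|_{\infty}$. Because $\calH^{3}|_{\W}$ is a constant multiple of $\calL^{2}$ (with a constant independent of $\W$), the UBVP hypothesis combined with the triangle inequality gives $\|\widehat{K_{r,R}}\|_{\infty} \lesssim 1$ uniformly in $r,R,\W$. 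Evaluating the Fourier transform at $\xi = 0$ then yields \eqref{eq:sharpAB}.

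To pass from the sharp to the smooth cutoff, I would decompose
\begin{equation*} \psi^{R} - \psi^{r} = \chi_{\{r < \|\cdot\| \leq R\}} + (\psi^{R} - \chi_{B(\mathbf{0},R)}) - (\psi^{r} - \chi_{B(\mathbf{0},r)}). \end{equation*}
The first summand is bounded by \eqref{eq:sharpAB}. For $\rho \in \{r,R\}$, the remainder $\psi^{\rho} - \chi_{B(\mathbf{0},\rho)}$ has absolute value at most $1$ and is supported in $B(\mathbf{0},2\rho) \setminus B(\mathbf{0},\rho/2)$, an annulus of $\calL^{2}$-measure $\sim \rho^{3}$ on $\W$. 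Combined with $|K(z)| \lesssim \|z\|^{-3}$, this gives
\begin{equation*} \int_{\W} |K|\cdot |\psi^{\rho} - \chi_{B(\mathbf{0},\rho)}| \, d\calL^{2} \lesssim \int_{\rho/2}^{2\rho} s^{-3}\cdot s^{2}\, ds \lesssim 1, \end{equation*}
so each remainder contributes $O_{\psi}(1)$, and the required bound $|I(r,R)|\leq A_{\psi}$ follows by the triangle inequality.

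The main step to execute carefully is the Plancherel reduction: one must verify that the difference of two sharply truncated operators is genuinely an $\R^{2}$-convolution (which relies on $q^{-1}\cdot p = p-q$ on vertical planes) and track the proportionality constants relating $\calH^{3}|_{\W}$ to $\calL^{2}|_{\W}$, noting that they are independent of $\W$ by the isometry of any two vertical planes. Everything else is a short routine computation.
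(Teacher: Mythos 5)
Your proof is correct, and it takes a route that differs from the paper's in a pleasant way. Both arguments begin identically: on a vertical plane $\W$, the Heisenberg convolution collapses to Euclidean convolution on $\W\cong\R^{2}$, $\calH^{3}|_{\W}$ is a constant multiple of $\calL^{2}$, and the UBVP hypothesis forces the Fourier transform of the truncated kernel to be uniformly bounded in sup-norm (the paper cites Grafakos 2.5.10 for exactly this converse of the multiplier theorem). From there the arguments diverge. The paper keeps the truncation $K B_{\epsilon}$ (where $B_{\epsilon}$ is the indicator of $\W\setminus B(\mathbf{0},\epsilon)$), writes $\int (\psi^{R}-\psi^{r})K\,d\calL^{2}=\int(\psi^{R}-\psi^{r})KB_{\epsilon}\,d\calL^{2}$ for small $\epsilon$, applies Plancherel to pair $\widehat{KB_{\epsilon}}$ against $\widehat{\psi^{R}}-\widehat{\psi^{r}}$, and finishes with the scale-invariant $L^{1}$ bound $\|\widehat{\psi^{\rho}}\|_{L^{1}}=\|\widehat{\psi}\|_{L^{1}}\lesssim 1$. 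You instead work with the two-sided truncation $K_{r,R}=K\chi_{\{r<\|\cdot\|\le R\}}$, observe that the associated operator is exactly $T_{\mu,r}-T_{\mu,R}$ so its operator norm is under control, read off the zero Fourier mode $\widehat{K_{r,R}}(0)=\int K_{r,R}$ to obtain the sharper annular-cancellation bound \eqref{eq:sharpAB}, and then pass to the smooth $\psi$-cutoffs by a purely size-based estimate using only $|K(z)|\lesssim\|z\|^{-3}$ and $\calL^{2}(B(\mathbf{0},\rho)\cap\W)\sim\rho^{3}$. Your route thus isolates a slightly stronger intermediate statement (AB with sharp indicator cutoffs, not appearing explicitly in the paper) and dispenses with the second Fourier-analytic step, replacing it with an elementary annulus estimate; the paper's route keeps everything on the Fourier side and relegates the scaling bookkeeping to the bump functions. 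Both are complete proofs of the same constants' worth of annular boundedness.
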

\begin{proof} Fix a vertical plane $\W \subset \He$. The group operation "$\cdot$" restricted to $\W$ coincides with usual (Euclidean) addition in the plane $\W$: if $v,w \in \W$, then $v^{-1} \cdot w = w - v$. Also, $\calH^{3}|_{\W} = c \cdot \calL^{2}$ for some positive constant $c$. Hence, for $w \in \W$,
\begin{displaymath} T_{\calH^{3}|_{\W},\epsilon}f(w) = c\int_{\W} K(w - v)B_{\epsilon}(w - v) f(v)\, d\mathcal{L}^2(v) = c(KB_{\epsilon}) \ast f(w),\quad f \in \calC^{\infty}_{0}(\W), \end{displaymath}
where $B_{\epsilon}$ is the indicator function of $\W \setminus B(0,\epsilon)$, the notation "$\ast$" means Euclidean convolution, and $\calC^{\infty}_{0}(\W)$ stands for smooth and compactly supported functions on $\W$. Since $T_{\calH^{3}|_{\W},\epsilon}$ is $L^{2}$ bounded on $\W \cong \R^{2}$, it follows that the Fourier transform of $KB_{\epsilon}$ is a bounded function on $\W$, independently of $\epsilon$:
\begin{displaymath} |\widehat{KB_{\epsilon}}(\xi)| \leq A, \qquad \xi \in \W, \: \epsilon > 0. \end{displaymath}
For the proof see e.g. \cite[2.5.10]{Grafakos}. Now, fix a function $\psi \colon \He \to \R$ as in Definition \ref{abc}. Fix also $0 < r < R < \infty$, and a vertical plane $\W$. Note that $\psi^{R} - \psi^{r}$ vanishes in the ball $B(0,r/2)$. Hence, if $0 < \epsilon < r/2$, we have $\psi^{R}(w) - \psi^{r}(w) = [\psi^{R}(w) - \psi^{r}(w)]B_{\epsilon}(w)$ for $w \in \W$, and hence
\begin{align*} \left|\int [\psi^{R}(w) - \psi^{r}(w)]K(w) \, d\calL^{2}(w)\right| & = \left|\int [\psi^{R}(w) - \psi^{r}(w)]K(w)B_{\epsilon}(w) \, d\calL^{2}(w)\right|\\
& \lesssim A\int  \left|\widehat{\psi^{R}}(\xi) - \widehat{\psi^{r}}(\xi)\right| \, d\calL^{2}(\xi),  \end{align*}
using Plancherel before passing to the second line. Moreover,
\begin{displaymath} \int |\widehat{\psi^{R}}(\xi)| \, d\calL^{2}(\xi) = \int |\widehat{\psi}(\delta_{R}(\xi))|R^{3} \, d\calL^{2}(\xi) = \int |\widehat{\psi}(\xi)| \, d\calL^{2}(\xi) \lesssim 1, \end{displaymath}
and the same holds with "$r$" in place of "$R$". This completes the proof. \end{proof}

Now, recall the main result, Theorem \ref{mainIntro}. With the terminology above, it states that if a $3$-dimensional CZ kernel satisfies the UBVP, then the associated SIO is bounded on certain $L^{2}$ spaces, which we will define momentarily (see Section \ref{s:C1alphaFunctions}). The strategy of proof is to infer, from the lemma above, that the kernel satisfies the AB condition, and proceed from there. In particular, it remains to prove the following version Theorem \ref{mainIntro}:
\begin{thm}\label{mainABC} Let $\alpha > 0$, and assume that $\phi \in C^{1,\alpha}(\W)$ has compact support. Assume that a $3$-dimensional CZ kernel $K \colon \He \setminus \{\mathbf{0}\} \to \R$ satisfies the AB condition. Then, the associated SIO is bounded on $L^{2}(\mu)$ for any $3$-Ahlfors-David regular measure $\mu$ supported on the intrinsic graph of $\phi$. \end{thm}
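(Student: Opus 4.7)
My plan is to prove $L^2(\mu)$ boundedness on the space of homogeneous type $(\Gamma,\mu,d)$ via a $T(1)$-type theorem, using the plane-approximation results of Section~\ref{s:regularity} as the bridge between the AB hypothesis on vertical planes and the intrinsic graph $\Gamma$. By a $T(1)$ theorem on homogeneous spaces, to conclude it suffices to verify, with constants independent of $\epsilon>0$, (i) a weak boundedness property for the truncated operators $T_{\mu,\epsilon}$, and (ii) $T_{\mu,\epsilon}(1), T^{\ast}_{\mu,\epsilon}(1) \in \mathrm{BMO}(\mu)$. By Remark~\ref{r:adjoint} the adjoint kernel $K^{\ast}$ is a $3$-dimensional CZ kernel, and since $w \mapsto w^{-1}$ preserves vertical planes together with $\calL^2$, the adjoint also satisfies the AB condition; so it is enough to handle $T$ itself.

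\textbf{Dyadic setup and WBP.} Fix a Christ-type dyadic system $\{Q\}$ for $(\Gamma, \mu, d)$ with sides $\ell(Q)$. For each $Q$ pick a point $p_Q \in Q$ and a vertical plane $\W_Q$ best approximating $\Gamma$ near $Q$; Section~\ref{s:regularity} should yield
\begin{displaymath}
\sup_{p \in \Gamma \cap CQ} \dist(p, \W_Q) \leq c_0\, \ell(Q)^{1+\alpha}.
\end{displaymath}
Parametrizing $\Gamma$ locally over $\W_Q$ by the intrinsic graph map gives a quantitative almost-isometry between $\mu|_{CQ}$ and $\calH^3|_{\W_Q}$ with error of order $\ell(Q)^\alpha$. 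For the weak boundedness property I would compare $\langle T_{\mu,\epsilon}\chi_B, \chi_B\rangle_\mu$ on a ball $B=B(p,r)\cap\Gamma$ with the analogous pairing on $\W_Q$ for $\ell(Q)\sim r$; the planar pairing is $O(r^3)\sim\mu(B)$ by the AB--UBVP equivalence, and the transference error is summable in $r^\alpha$ via the H\"older estimate \eqref{form0}.

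\textbf{$T(1) \in \mathrm{BMO}(\mu)$.} This is the core step. For $p \in B(p_0,r)\cap\Gamma$ and $\epsilon \sim r$, I would decompose the tail integral $T_{\mu,\epsilon}\chi_{\Gamma\setminus cB}(p)$ into Heisenberg annuli $A_k$ at scales $2^k r$. On each $A_k$, replace integration against $\mu$ by integration against $\calH^3|_{\W_{Q_k}}$ for an approximating plane, picking up a transference error $\lesssim 2^{-k\alpha}$ via \eqref{form0} and the preceding step. To control the resulting main terms, telescope them using smooth radial cutoffs $\psi^{2^{k+1}r}-\psi^{2^k r}$, so that the AB inequality \eqref{vabann} bounds each scale by the same constant $A_\psi$; the telescoped sum then contributes only a bounded function of $p$. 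Subtracting the values at two points $p,p'\in B$ and estimating the difference via \eqref{form42} plus another $C^{1,\alpha}$-controlled telescope yields the $\mathrm{BMO}$ oscillation bound.

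\textbf{Main obstacle.} The technical heart of the argument is making the telescoping of the previous step compatible with the AB cancellation. The approximating plane $\W_{Q_k}$ tilts with $k$ by a $C^{1,\alpha}$-controlled amount, while \eqref{vabann} is stated on a \emph{single} plane; one cannot apply it directly to an annular sum whose pieces live on different planes. Two natural options are either to fix a single ``master'' plane $\W_{Q_0}$ and absorb the tilt of $\W_{Q_k}$ into the error at each scale, or to pair consecutive planes $\W_{Q_k}, \W_{Q_{k+1}}$ and bound the mismatch separately. Either way, one must show that the resulting Carleson-type sums of tilt-errors converge; this is precisely where the exponent $\alpha>0$ enters in an essential way, and it suggests why the raw Lipschitz case of Conjecture~\ref{mainC} appears to require genuinely different ideas.
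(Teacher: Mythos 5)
Your high-level strategy — a $T1$ theorem on the homogeneous space $(\Gamma,d,\mu)$, combined with plane-approximation from Section~\ref{s:regularity} — matches the paper's, as does your identification of the right adjoint reduction. The paper, however, verifies the testing-condition form of the $T1$ theorem (Section~\ref{s:T1}: $\|T_{\mu,\epsilon}\chi_R\|_{L^2(\mu|_R)}^2 \lesssim \mu(R)$ over Christ cubes $R$) rather than WBP plus BMO; this is a minor difference in packaging. The substantive divergence, and the reason your sketch cannot be completed as written, is in how the tangent-plane approximation interacts with the AB cancellation.

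You correctly locate the central obstacle: \eqref{vabann} is a statement about a \emph{single} vertical plane, whereas an annulus-by-annulus decomposition tempts one to use a plane $\W_{Q_k}$ that tilts with the scale $k$. Your proposed fixes (``master plane'' or ``pair consecutive planes'') are in the right spirit, but you leave them unresolved, and the first is in fact what the paper carries out in a specific way you do not reach: one fixes the \emph{point} $p$ (not a cube), left-translates so that $p=\mathbf{0}$ (permitted because $C^{1,\alpha}(\W)$ is left-invariant, Lemma~\ref{l:translation}), and then approximates $\Gamma$ for \emph{all} small scales simultaneously by the single tangent plane $\W_0 = \Gamma(\phi_0)$ with $\phi_0(y,t)=\nabla^\phi\phi(0,0)\,y$. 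The AB condition is applied exactly once, to $\W_0$ and the full truncated kernel $K_I=\sum_{j\in E_1(k)}K_{(j)}$, yielding the single constant $A$. The scale-by-scale errors between integration over $\Gamma$ and over $\W_0$ are then controlled by Proposition~\ref{p:affineApproximation} ($|\phi(y,t)-\nabla^\phi\phi(0,0)y|\lesssim\|(y,t)\|^{1+\alpha}$) together with the H\"older bound of Lemma~\ref{l:standardHolder}, producing a geometrically summable series $\sum_{j\ge 1}(2^{-\alpha\beta j/2}+2^{-\alpha j})\lesssim 1$; this is precisely where $\alpha>0$ is indispensable.

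Two further concrete gaps in the proposal. First, the sentence ``the AB inequality bounds each scale by the same constant $A_\psi$; the telescoped sum then contributes only a bounded function of $p$'' misuses \eqref{vabann}: the bound $A_\psi$ is uniform in the pair $(r,R)$ but applies to the single telescoped difference $\psi^R-\psi^r$; if you apply it separately to each annulus $\psi^{2^{k+1}r}-\psi^{2^k r}$ and then sum over $k$, you get $\sum_k A_\psi=\infty$. The AB condition must be used exactly once on the whole range of scales, with the per-scale work pushed entirely into the $C^{1,\alpha}$-controlled error terms. Second, your sketch never uses the compact support of $\phi$, which is an essential hypothesis of Theorem~\ref{mainABC}. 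Tangent-plane approximation only helps at small scales; at scales $1\le 2^{-j}\le \ell(R)$ the error $\|(y,t)\|^{1+\alpha}$ is no longer small, and the paper's estimate for the piece $S_{II}$ instead compares $\Gamma$ with the ambient plane $\W$ directly, exploiting that $\Phi_\Gamma=\Phi_\W$ and $\nabla^\phi\phi=0$ outside $\spt\phi$. Without accounting for these scales, and hence without the compact support, the testing bound $\lesssim\mu(R)$ fails for large cubes $R$.
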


As simple corollaries, the $\He$-Riesz transform $\mathcal{R}$ and the quasi $\He$-Riesz transform $\mathcal{Q}$, recall Section \ref{s:exampleKernels}, are $L^2$ bounded on the intrinsic graphs mentioned in Theorem \ref{mainABC}. Since the associated kernels $\calK$ and $\Omega$ are $3$-dimensional CZ kernels, it suffices to verify that they satisfy the AB condition. But this is a consequence of either horizontal antisymmetry (in the case of $\calK$) or antisymmetry (in the case of $\Omega$). In fact, the key cancellation condition \eqref{vabann} even holds in the stronger form
\begin{displaymath} \int [\psi^{R}(w) - \psi^{r}(w)]K(w) \, d\calL^{2}(w) = 0 \end{displaymath}
for all functions $\psi$ as in Definition \ref{abc}, for all $0 < r < R < \infty$, and all vertical planes $\W$.

\subsection{Intrinsic graphs and the boundedness of SIOs}\label{s:C1alphaFunctions} For which $3$-ADR measures $\mu$ are the SIOs associated to $3$-dimensional $CZ$ kernels satisfying the UBVP condition  bounded on $L^{2}(\mu)$? The following seems like a natural conjecture:
\begin{conjecture}\label{mainC} Let $\W \subset \He$ be a vertical subgroup with complementary subgroup $\V$, and let $\phi \colon \W \to \V$ be an intrinsic Lipschitz function (see Definition \ref{d:intrinsicGraph}). If $T$ is a convolution type SIO with a $3$-dimensional CZ kernel which satisfies the UBVP condition, then it is bounded on $L^{2}(\mu)$ for all $3$-ADR measures $\mu$ supported on the intrinsic graph $\Gamma(\phi)$. In particular, this is true for $\mu = \calH^{3}|_{\Gamma(\phi)}$ (since $\calH^{3}|_{\Gamma(\phi)}$ is $3$-ADR by Theorem 3.9 in \cite{MR3511465}).
\end{conjecture}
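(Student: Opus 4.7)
My plan for Conjecture \ref{mainC} is to combine a metric-space $T(1)$-theorem with a corona-type geometric decomposition of $\Gamma(\phi)$. Since any $3$-ADR measure $\mu$ is doubling on its support, a David-Journ\'e-Semmes-style $T(1)$-theorem (in the homogeneous-space formulation developed by Hyt\"onen and others) should reduce $L^{2}(\mu)$-boundedness of $T$ to the weak boundedness property together with the conditions $T\mathbf{1},\, T^{*}\mathbf{1} \in \mathrm{BMO}(\mu)$. Formulating and proving such a $T(1)$-theorem for $3$-dimensional CZ kernels in $\He$ is a reasonable first step, and one must be careful to tolerate the half-order loss in the H\"older exponent of the adjoint kernel pointed out in Remark \ref{r:adjoint}.

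The second step is to verify these three conditions on $\Gamma(\phi)$ using the UBVP assumption. For each ball $B = B(p,r)$ centred on $\Gamma(\phi)$, let $\W_{B}$ denote a vertical plane minimising an appropriate $L^{2}$-$\beta$-number of $\Gamma(\phi) \cap B$. The UBVP condition --- or equivalently the AB condition from Definition \ref{abc}, by the lemma of Section \ref{s:cancel} --- yields uniform $L^{2}(\calH^{3}|_{\W_{B}})$-boundedness and the annular cancellation of $K$ on $\W_{B}$. The weak boundedness and BMO bounds on $\Gamma(\phi)$ would then be obtained by transporting this cancellation from $\W_{B}$ to $\Gamma(\phi) \cap B$ via a suitable projection or parametrisation, incurring a geometric error whose size is controlled by the flatness of $\Gamma(\phi) \cap B$ relative to $\W_{B}$. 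For the BMO estimate of $T\mathbf{1}$ one would subtract on each ball $B$ the constant $T\mathbf{1}_{\W_{B}^{c}}(p_{B})$ (defined via the approximating plane rather than $\Gamma(\phi)$), so that the remaining error is supported near $\Gamma(\phi) \cap cB$ and is amenable to a Carleson-type summation across scales.

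The main obstacle, and indeed the reason the conjecture remains open, is precisely the quantitative summability of these flatness errors. In the $C^{1,\alpha}$ setting of Theorem \ref{mainIntro}, the analysis of Section \ref{s:regularity} produces a \emph{polynomial} decay of the flatness with the scale, so the resulting errors are summable almost by inspection. For general intrinsic Lipschitz graphs no such decay is available: the flatness is uniformly bounded but does not tend to zero with scale. This is the Heisenberg analog of the classical fact that Lipschitz graphs in $\R^{d}$ are only uniformly rectifiable rather than $C^{1,\alpha}$, and the Euclidean resolutions via Calder\'on commutators or David-Semmes corona decompositions do not transfer easily to $\He$ because the group law is non-abelian and vertical planes interact non-additively with intrinsic graphs. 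I would expect a complete proof to proceed via a genuine Heisenberg corona decomposition of $\Gamma(\phi)$ into stopping-time pieces, each lying close (in an $L^{2}$-$\beta$ sense) to an intrinsic $C^{1,\alpha}$-graph to which Theorem \ref{mainIntro} applies directly, with Carleson control over the transition scales. Constructing such a decomposition --- equivalently, developing a satisfactory theory of uniform rectifiability for intrinsic Lipschitz graphs in $\He$ --- is the crucial missing ingredient.
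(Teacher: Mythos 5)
The statement you are addressing is labelled a \emph{Conjecture} in the paper, and the paper does not prove it: the authors only establish the special case of compactly supported intrinsic $C^{1,\alpha}$ graphs (Theorem \ref{mainIntro}, via Theorem \ref{mainABC}). Your submission is accordingly not a proof but a research programme, and to your credit you say so explicitly. The gap you identify is real and is exactly the obstruction: in the paper's proof of Proposition \ref{p:T1}, the testing condition on a Christ cube is verified by comparing $\Gamma(\phi)$ at each scale $2^{-j}$ with its vertical tangent plane, and the error per scale is $\lesssim 2^{-\alpha\beta j/2} + 2^{-\alpha j}$, summable as a geometric series precisely because Proposition \ref{p:affineApproximation} gives the affine approximation $|\phi^{(p^{-1})}(y,t) - \nabla^{\phi}\phi(w)y| \lesssim \|(y,t)\|^{1+\alpha}$ with $\alpha > 0$. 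For a general intrinsic Lipschitz $\phi$ this improves only to $O(\|(y,t)\|)$, the per-scale errors are merely bounded, and the sum over scales diverges. Without a Carleson-type (square-summable in a packing sense) control of the deviation from planes --- i.e.\ a corona decomposition or a quantitative uniform-rectifiability theory for intrinsic Lipschitz graphs --- neither your $T\mathbf{1}\in\mathrm{BMO}$ route nor the paper's testing-condition route closes.

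Two smaller remarks on your outline. First, the paper does not use the $T\mathbf{1},T^{*}\mathbf{1}\in\mathrm{BMO}$ formulation: it uses the testing conditions \eqref{T1} on Christ cubes for the homogeneous space $(\Gamma,d,\mu)$, reduces from a general $3$-ADR $\mu$ on $\Gamma$ to the spherical measure $\calS^{3}_{d_{1}}|_{\Gamma}$ by an absolute-continuity lemma, and then runs a Littlewood--Paley decomposition $S = \sum_{j} T_{(j)}$ combined with the thin-boundary property (C4) to absorb logarithmic losses near $\partial R$. These are equivalent in spirit to what you propose, but if you intend to prove a new metric-space $T(1)$ theorem you should be aware the needed version already exists and is cited in the paper. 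Second, your plan to subtract $T\mathbf{1}_{\W_{B}^{c}}(p_{B})$ computed on an approximating plane is sensible, but note that the cancellation you can import from UBVP/AB lives on \emph{exact} vertical planes; the transference error is again controlled only by the flatness, so this step does not circumvent the summability problem --- it merely relocates it. In short: the proposal is a plausible roadmap for attacking the conjecture, but it contains no proof of the conjecture, and the missing ingredient you name (the corona decomposition) is genuinely missing.
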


Recall that the main theorem of the paper, Theorem \ref{mainIntro}, states that the conjecture holds for $\phi \colon \W \to \V$, which are compactly supported and intrinsically $C^{1,\alpha}(\W)$-smooth for some $\alpha \in (0,1]$, see Definition \ref{d:C1,alpha,W}.

\subsubsection{Intrinsic Lipschitz graphs}\label{s:intrinsicLipschitz}
In our terminology, the \emph{vertical subgroups} in $\He$ are all the nontrivial homogeneous normal subgroups of $\He$, except for the center of the group. Recall that \emph{homogeneous subgroups} are subgroups of $\He$ which are preserved under dilations of the Heisenberg group, see \cite{MR3587666}.
With the choice of coordinates as in \eqref{eq:group_law}, the vertical subgroups coincide therefore with the $2$-dimensional subspaces of $\mathbb{R}^3$ that contain the $t$-axis. To every vertical subgroup $\mathbb{W}$ we associate a \emph{complementary horizontal subgroup} $\mathbb{V}$. In our coordinates this is simply the $1$-dimensional subspace in $\mathbb{R}^3$ which is perpendicular to $\mathbb{W}$. Every point $p \in \mathbb{H}$ can be written as $p=p_{\mathbb{W}} \cdot p_{\mathbb{V}}$ with a uniquely determined vertical component $p_{\mathbb{W}}\in \mathbb{W}$ and horizontal component $p_{\mathbb{V}}\in \mathbb{V}$. This gives rise to the \emph{Heisenberg projections}
\begin{displaymath}
\pi_{\mathbb{W}}:\mathbb{H} \to \mathbb{W},\quad \pi_{\mathbb{W}}(p)=p_{\mathbb{W}}
\end{displaymath}
and
\begin{displaymath}
\pi_{\mathbb{V}}:\mathbb{H} \to \mathbb{V},\quad \pi_{\mathbb{V}}(p)=p_{\mathbb{V}}.
\end{displaymath}

\begin{definition}\label{d:intrinsicGraph}
An \emph{intrinsic graph} is a set of the form
\begin{displaymath} \Gamma(\phi) = \{w \cdot \phi(w) : w \in \W\}, \end{displaymath}
where $\W \subset \He$ is a {vertical subgroup} with complementary horizontal subgroup $\V$, and $\phi \colon \W \to \V$ is any function. We often use the notation $\Phi$ for the \emph{graph map} $\Phi(w) = w \cdot \phi(w)$. To define \emph{intrinsic Lipschitz graphs}, fix a parameter $\gamma > 0$, and consider the set (cone)
\begin{displaymath} C_{\gamma} = \{z \in \He: \|\pi_{\W}(z)\| \leq \gamma \|\pi_{\V}(z)\|\}. \end{displaymath}
We say that $\phi$ is an \emph{intrinsic $L$-Lipschitz function}, and $\Gamma(\phi)$ an intrinsic $L$-Lipschitz graph, if
\begin{displaymath}
(z\cdot C_{\gamma})\cap \Gamma(\phi)=\{z\},\quad\text{for }z\in \Gamma(\phi)\text{ and }0<\gamma<\tfrac{1}{L}.
\end{displaymath}
The function $\phi$ is said to be \emph{intrinsic Lipschitz} if it is intrinsic $L$-Lipschitz for some constant $L\geq 0$.
\end{definition}

\begin{remark}\label{r:parametrisation} Every vertical subgroup $\mathbb{W}$ can be parametrised as
\begin{displaymath}
\mathbb{W} = \{(- w_{1} \sin \theta,w_{1} \cos \theta, w_{2}):\; (w_1,w_2)\in \mathbb{R}^2\}
\end{displaymath}
with an angle $\theta \in [0,\pi)$ uniquely determined by $\mathbb{W}$. The complementary horizontal subgroup is then given by
\begin{displaymath}
\mathbb{V} = \{ (v \cos \theta, v \sin \theta,0):\; v \in \mathbb{R}\}.
\end{displaymath}
We often denote points on $\W$ in coordinates by "$(w_{1},w_{2})$", and points on $\V$ by real numbers "$v$". Then, expressions such as $(w_{1},w_{2}) \cdot (w_{1}',w_{2}')$ and $(w_{1},w_{2}) \cdot v$ should be interpreted as elements in $\He$, namely the products of the corresponding elements on $\W$ and $\V$.  \end{remark}

Intrinsic Lipschitz graphs were introduced by Franchi, Serapioni and Serra Cassano in \cite{MR2287539}, motivated by
the study of locally finite perimeter sets and rectifiability in the Heisenberg group \cite{MR1871966}. While intrinsic Lipschitz functions continue to be studied as a class of mappings which are interesting in their own right, they have also recently found a prominent application in \cite{2017arXiv170100620N}. Various properties of intrinsic Lipschitz functions are discussed in detail in
\cite{MR3587666}. For instance, it is known that an intrinsic Lipschitz function has a well-defined \emph{intrinsic gradient} $\nabla^{\phi}\phi \in L^{\infty}(\calH^{3}|_{\W})$, which we will use to perform integration on intrinsic Lipschitz graphs.

\subsubsection{Intrinsic differentiability}

To define the intrinsic gradient, we recall that the notion of intrinsic graph is left invariant. Indeed, given a function $\phi:\mathbb{W}\to \mathbb{V}$ with intrinsic graph $\Gamma(\phi)$, for every $p \in \He$, the set $p \cdot \Gamma(\phi)$ is again the intrinsic graph of a function $\mathbb{W} \to \mathbb{V}$, which we denote by $\phi^{p}$, so that $p \cdot \Gamma(\phi) = \Gamma(\phi^{p})$. For instance if $\mathbb{W}$ is the $(y,t)$-plane, $\mathbb{V}$ the $x$-axis, and $p_{0} = (x_{0},y_{0},t_{0})$, then we can compute explicitly
\begin{equation}\label{eq:translated_function} \phi^{p_{0}}(y,t) = \phi(y - y_{0},t - t_{0} + \tfrac{1}{2}x_{0}y_{0} - yx_{0}) + x_{0}. \end{equation}

We also recall that in our context an \emph{intrinsic linear map} is a function $G:\mathbb{W}\to \mathbb{V}$ whose intrinsic graph is a vertical subgroup.
\begin{definition}
A function $\psi:\mathbb{W} \to \mathbb{V}$ with $\psi(\textbf{0})=\textbf{0}$ is \emph{intrinsically differentiable at $\mathbf{0}$} if
there exists an intrinsic linear map $G:\mathbb{W} \to \mathbb{V}$ such that
\begin{equation}\label{eq:diff_0}
\|(Gw)^{-1} \cdot \psi(w)\| = o(\|w\|),\quad\text{as }w\to \mathbf{0}.
\end{equation}
The map $G$ is called the \emph{intrinsic differential} of $\psi$ at $\textbf{0}$ and denoted by $G= d\psi_{\mathbf{0}}$.

More generally, a function $\phi:\mathbb{W}  \to \mathbb{V}$ is \emph{intrinsically differentiable at a point $w_0\in \mathbb{W}$} if $\psi:=\phi^{(p_0^{-1})}$ is intrinsic differentiable at $\mathbf{0}$ for $p_0:= w_0 \cdot \phi(w_0)$. The  \emph{intrinsic differential of $\phi$ at $w_0$} is given by
\begin{displaymath}
d\phi_{w_0}:= d \psi_{\mathbf{0}}.
\end{displaymath}
\end{definition}
Recall that $\mathbb{V}$ can be identified with $\mathbb{R}$ through our choice of coordinates, see Remark \ref{r:parametrisation}. Under this identification the restriction of the Kor\'{a}nyi distance to $\mathbb{V}$ agrees with the Euclidean distance $| \cdot |$ so that \eqref{eq:diff_0} reads
\begin{equation*}
|\psi(w)-Gw| = o(\|w\|),\quad\text{as }w\to \mathbf{0}.
\end{equation*}
With the parametrisation from Remark \ref{r:parametrisation}, every intrinsic linear map $G: \mathbb{W} \to \mathbb{V}$ has the form $G(w_1,w_2)= c w_1$ for a constant $c\in \mathbb{R}$.

\begin{definition}\label{d:IntrGrad}
Assume that $\phi:\mathbb{W}\to \mathbb{V}$ is intrinsically differentiable at a point $w_0 \in\mathbb{W}$. Then its \emph{intrinsic gradient at $w_0$} is the unique number $\nabla^{\phi}\phi(w_0)$ such that
\begin{displaymath}
d\phi_{w_0} (w_1,w_2) = \nabla^{\phi}\phi(w_0) w_1,\quad\text{for all }(w_1,w_2)\in \mathbb{W}.
\end{displaymath}
\end{definition}

The intrinsic gradient $\nabla^{\phi}\phi(w_0)$ is simply a number determined by the "angle" between $\W$ and the vertical plane $d\phi_{w_{0}}(\W)$.

\subsubsection{$C^{1,\alpha}$-intrinsic Lipschitz functions and graphs}\label{s:C1alphaGraphs} The goal of the paper is to prove that certain SIOs are bounded in $L^2$ on intrinsic graphs $\Gamma(\phi)$, where $\phi$ is a compactly supported function satisfying a (Heisenberg analogue of) $C^{1,\alpha}$-regularity. The most obvious definition of $C^{1,\alpha}$ would be to require the intrinsic gradient $\nabla^{\phi} \phi$ to be locally $\alpha$-H\"older function in the metric space $(\W,d)$, but this condition is not left-invariant: the parametrisation of the left-translated graph $p^{-1} \cdot \Gamma(\phi)$, for $p \in \Gamma(\phi)$, would not necessarily be locally $\alpha$-H\"older continuous with the same exponent $\alpha$, see Example \ref{ex1}. So, instead, we define an "intrinsic" notion of $C^{1,\alpha}$, which is (a) left-invariant in the sense above, and (b) is well-suited for the application we have in mind, and (c) is often easy to verify, see Remark \ref{e:euclideanHolder} below.

\begin{definition}\label{d:C1,alpha,W}
We say that a function $\phi:\mathbb{W} \to \mathbb{V}$ is an \emph{intrinsic $C^{1}(\W)$ function} if $\nabla^{\phi}\phi$ exists at every point $w \in \W$, and is continuous. We further define the subclasses $C^{1,\alpha}(\W)$, $\alpha \in (0,1]$, as follows: $\phi \in C^{1,\alpha}(\W)$, if $\phi \in C^{1}(\W)$, and there exists a constant $H \geq 1$ such that
\begin{align}\label{holder} |\nabla^{\phi^{(p_{0}^{-1})}}\phi^{(p_{0}^{-1})}(w) - \nabla^{\phi^{(p_{0}^{-1})}}\phi^{(p_{0}^{-1})}(\mathbf{0})| \leq H\|w\|^{\alpha}, \end{align}
for all $p_{0} \in \Gamma(\phi)$, and all $w \in \W$. For notational convenience, we also define $C^{1,0}(\W) := C^{1}(\W)$. Intrinsic graphs of $C^{1}$ (or $C^{1,\alpha}$) functions will be called \emph{intrinsic $C^{1}$ (or $C^{1,\alpha}$) graphs}.
\end{definition}

Several remarks are now in order.
\begin{remark}\label{r:ab}
(a) It is well-known, see for instance Proposition 4.4 in \cite{MR3168633} or Lemma 4.6 in \cite{CFO}, that if $\phi \in C^{1}(\W)$ is intrinsic Lipschitz, then $\nabla^{\phi}\phi \in L^{\infty}(\W)$.

(b) Conversely, if $\phi \in C^{1}(\W)$ with $\nabla^{\phi}\phi \in L^{\infty}(\W)$, then $\phi$ is intrinsic Lipschitz. This is well-known and follows from existing results, but it was difficult to find a reference to this particular statement; hence we include the argument in Lemma \ref{l:C1ImpliesLipschitz} below.

\end{remark}

\begin{remark}\label{r:compactSupport} Note that if $\phi \in C^{1}(\W)$ has compact support, then $\nabla^{\phi}\phi \in L^{\infty}(\W)$, and hence $\phi$ is intrinsic Lipschitz by (b) above.

\end{remark}


\begin{remark}  If $\W$ is the $(y,t)$-plane, the condition \eqref{holder} for $p_0=(y_0,t_0)\cdot \phi(y_0,t_0)$ can be written in coordinates as follows:
\begin{equation}\label{form25} |\nabla^{\phi}\phi\big(y + y_{0},t + t_{0} + \phi(y_{0},t_{0})y\big) - \nabla^{\phi}\phi(y_{0},t_{0})| \leq H\|(y,t)\|^{\alpha}. \end{equation}
To see this, apply the representation \eqref{eq:translated_function} with $p_{0}$ replaced by $p_{0}^{-1}$.
\end{remark}

\begin{remark}\label{r:Hregularity}  It is known by Theorem 4.95 in \cite{MR3587666}  that the intrinsic graph of an intrinsic $C^{1}(\W)$ function is an \emph{$H$-regular surface}; in particular, $\phi$ satisfies an area formula, see Section \ref{s:areaFormula} for more details. \end{remark}

The definitions of (intrinsic) $C^{1}(\W)$ and $C^{1,\alpha}(\W)$ are quite different from their standard Euclidean counterparts, which we denote by $\calC^{1}(\R^{2})$ and $\calC^{1,\alpha}(\R^{2})$ (a function belongs to $\calC^{1,\alpha}(\R^{2})$ if its partial derivatives exist and are $\alpha$-H\"older continuous with respect to the Euclidean metric).
However, at least for compactly supported functions, sufficient regularity in the Euclidean sense also implies regularity in the intrinsic Heisenberg sense, as the following remark shows.

\begin{remark}\label{e:euclideanHolder} Assume that $\W$ is the $(y,t)$-plane, and identify $\W$ with $\R^{2}$. Then, any compactly supported $\calC^{1,\alpha}(\R^{2})$-function is in the class $C^{1,\alpha}(\W)$. Indeed, if $\phi \in \calC^{1}(\R^{2})$ then $\nabla^{\phi}\phi$ has the following expression:
\begin{equation}\label{burgers} \nabla^{\phi}\phi = \phi_{y} + \phi \phi_{t}, \end{equation}
see \cite[(4.4)]{CFO}. Since $\phi,\phi_{t}$ are bounded, $\phi$ is $\calC^1(\R^2)$ and $\phi_{y},\phi_{t}$ are Euclidean $\alpha$-H\"older, we infer that $\nabla^{\phi}\phi$ is Euclidean $\alpha$-H\"older continuous. Since also $\phi_{y}$ is bounded, we obtain
\begin{align*} |\nabla^{\phi}\phi\big(y + y_{0},t + t_{0} + \phi(y_{0},t_{0})y\big) - \nabla^{\phi}\phi(y_{0},t_{0})| & \lesssim \min\{1,|(y,t + \phi(y_{0},t_{0})y)|^{\alpha}\}\\
& \lesssim \min\{1,|(y,t)|^{\alpha}\} \lesssim \|(y,t)\|^{\alpha}, \end{align*}
which by \eqref{form25} verifies that $\phi \in C^{1,\alpha}(\W)$.
\end{remark}

\begin{lemma}\label{l:C1ImpliesLipschitz} Assume that $\phi \in C^{1}(\W)$ with $\nabla^{\phi}\phi \in L^{\infty}(\W)$. Then, $\phi$ is intrinsic Lipschitz. \end{lemma}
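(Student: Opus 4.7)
The plan is to combine left-invariance of the setup with the method of characteristics for the Burgers-type identity satisfied by $\nabla^\phi\phi$ in coordinates on $\W$. First, I would reduce to a pointwise estimate at the origin: both the $C^1(\W)$-class and the $L^\infty$-norm of the intrinsic gradient are preserved by the operation $\phi \mapsto \phi^{(p_0^{-1})}$ for $p_0 \in \Gamma(\phi)$, and the cone condition from Definition \ref{d:intrinsicGraph}, after translating the base point to $\mathbf{0}$, becomes the pointwise inequality $|\phi^{(p_0^{-1})}(w)| \le L\|w\|$ for all $w \in \W$. Thus it suffices to show that for every $\psi \in C^1(\W)$ with $\psi(\mathbf{0})=0$ and $\|\nabla^\psi\psi\|_\infty \le M$, there exists $L=L(M)$ such that $|\psi(w)| \le L\|w\|$ for all $w \in \W$. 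Choosing coordinates so that $\W$ is the $(y,t)$-plane, \eqref{burgers} reads $\nabla^\psi\psi = \psi_y + \psi\psi_t$.

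Next I would apply the method of characteristics to the $C^1$ vector field $X_\psi := \partial_y + \psi\partial_t$. Along any of its integral curves $\gamma(s) = (y_0+s,\tau(s))$ with $\dot\tau = \psi(\gamma)$, the chain rule yields $\tfrac{d}{ds}(\psi\circ\gamma) = \nabla^\psi\psi\circ\gamma$, so $\psi\circ\gamma$ is $M$-Lipschitz in $s$. Applied to the integral curve $\gamma_0$ through the origin this gives $|\psi(\gamma_0(s))| \le M|s|$ and $|\tau_0(s)| \le Ms^2/2$, whence $\|\gamma_0(s)\| \sim_M |s|$ and the desired bound holds trivially along $\gamma_0$. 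For a general point $(y_0,t_0)$, I extend the integral curve of $X_\psi$ through $(y_0,t_0)$ backward in $y$ to the $t$-axis, reaching some $(0,a)$ with $|a| \le |t_0| + |\psi(y_0,t_0)||y_0| + My_0^2/2$, and the $M$-Lipschitz property along the curve yields
\begin{equation*}
|\psi(y_0,t_0)| \le |\psi(0,a)| + M|y_0|.
\end{equation*}

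The main obstacle is the companion bound $|\psi(0,a)| \le C(M)\sqrt{|a|}$ on the $t$-axis, which matches the anisotropic scaling $\|(0,a)\| = 2\sqrt{|a|}$ of the Kor\'anyi norm. My plan here is a contradiction/bootstrap argument carried out simultaneously on the $y$- and $t$-axes: if $|\psi(0,a_0)|$ were to exceed a sufficiently large multiple of $\sqrt{|a_0|}$, then along the characteristic through $(0,a_0)$ the $M$-Lipschitz bound would force $|\psi| \gtrsim |\psi(0,a_0)|$ on a $y$-interval of length $\sim |\psi(0,a_0)|/M$, with $|\dot\tau|$ staying of the order $|\psi(0,a_0)|$ throughout this window; tracing the curve backward in $y$ must then cross the $y$-axis at some $(-s_1,0)$ with $|\psi(-s_1,0)|$ still of order $|\psi(0,a_0)|$, contradicting a parallel estimate on the $y$-axis established by running the analogous scheme in the opposite direction (the $y$-axis is easier since $\|(y,0)\| = |y|$ agrees with the Euclidean norm). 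Plugging the $t$-axis bound back into the displayed inequality above and solving the resulting quadratic for $|\psi(y_0,t_0)|$ (using $\|(y_0,t_0)\| \ge \max\{|y_0|, 2\sqrt{|t_0|}\}$) would then yield the global estimate with $L=L(M)$. I expect the simultaneous bootstrap between the two coordinate axes to be the technical heart of the argument, since no a priori pointwise bound on $\psi_t$ is available from the hypothesis.
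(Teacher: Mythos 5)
Your opening moves are sound and match the paper's strategy: reduce to a pointwise bound at the origin via left-invariance, observe that the cone condition becomes $|\psi(w)| \le L\|w\|$, note that the easy direction is along the integral curves of $\partial_y + \psi\,\partial_t$ (where $\psi$ is $M$-Lipschitz in the parameter by Lemma \ref{l:integral_along_curves}), and correctly identify the vertical H\"older estimate $|\psi(0,a)| \lesssim \sqrt{|a|}$ as the technical heart. A small caveat: you invoke the Euclidean identity $\nabla^\psi\psi = \psi_y + \psi\psi_t$ from \eqref{burgers}, but this formula requires Euclidean $\mathcal{C}^1$ smoothness which is \emph{not} part of the hypothesis $C^1(\W)$. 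The correct statement is Lemma \ref{l:integral_along_curves}, which gives the derivative of $\psi$ along characteristics directly from intrinsic differentiability (via Theorem 4.95 of \cite{MR3587666}) without appealing to partial derivatives of $\psi$.

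The genuine gap is in the ``simultaneous bootstrap'' for the $t$-axis bound. You claim the $y$-axis estimate $|\psi(y,0)| \lesssim |y|$ is ``easier since $\|(y,0)\| = |y|$ agrees with the Euclidean norm,'' and you propose to use it to rule out large values of $|\psi(0,a_0)|$. But the intrinsic gradient only controls $\psi$ \emph{along characteristics}, and the $y$-axis $\{(y,0)\}$ is not a characteristic (unless $\psi$ vanishes on it). The characteristic $\gamma_0$ through the origin satisfies $|\tau_0(s)| \le M s^2/2$, so to transfer the bound $|\psi(\gamma_0(s))| \le M|s|$ to the point $(s,0)$ you must compare values of $\psi$ at two points on the same vertical fiber at distance $|\tau_0(s)| \sim s^2$ -- which is precisely the $\sqrt{|t|}$ estimate you are trying to prove. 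In other words, the $y$-axis estimate is \emph{not} a priori easier; it is a consequence of the $t$-axis estimate, not an independent input to it. As stated, the bootstrap is circular. The paper avoids this entirely: in the proof of Proposition \ref{p:improvedRegularity}, one assumes $\phi(y_0,t_1) - \phi(y_0,t_2) = A(t_2-t_1)^{(1+\alpha)/2}$ with $A$ large, launches the two characteristics $\gamma_1, \gamma_2$ from $(y_0,t_1)$ and $(y_0,t_2)$, shows via the differential inequality $\partial_s(\tau_1 - \tau_2) = \phi(\gamma_1) - \phi(\gamma_2)$ that the curves must \emph{cross} at some $s^\ast$ within a controlled window, and then observes that the gap $\phi(\gamma_1) - \phi(\gamma_2)$ cannot have closed by $s^\ast$ -- giving a contradiction at the crossing point without ever appealing to an estimate on a second axis. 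Your argument would become correct if, instead of trying to transfer the contradiction to the $y$-axis, you compared the characteristic from $(0,a_0)$ with the characteristic through the origin and showed they intersect; that is exactly the paper's mechanism specialized to $y_0 = 0$, $t_1 = 0$, $t_2 = a_0$.
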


\begin{proof} For simplicity, we assume that $\W$ is the $(y,t)$-plane. Write $L := \|\nabla^{\phi}\phi\|_{L^{\infty}(\W)}$. By Proposition 4.56(iii) in \cite{MR3587666}, it suffices to verify that
\begin{displaymath} |\phi^{(p^{-1})}(y,t)| = |\phi^{(p^{-1})}(y,t) - \phi^{(p^{-1})}(0,0)| \lesssim_{L} \|(y,t)\|, \qquad (y,t) \in \W, \: p \in \Gamma(\phi). \end{displaymath}
Write $p = w \cdot \phi(w)$. Then, we estimate as follows:
\begin{align*} |\phi^{(p^{-1})}(y,t)| & \leq |\phi^{(p^{-1})}(y,t) - \nabla^{\phi}\phi(w)y| + |\nabla^{\phi}\phi(w)y|\\
& \lesssim_{L} \|(y,t)\| + L\|(y,t)\|, \end{align*}
as claimed. The estimate leading to the last line follows from Proposition \ref{p:affineApproximation} (with $\alpha = 0$) below. \end{proof}

\begin{proposition}\label{p:affineApproximation} Fix $\alpha \in [0,1]$, and assume that $\W$ is the $(y,t)$-plane. Assume that $\phi \in C^{1,\alpha}(\W)$ with $L := \|\nabla^{\phi}\phi\|_{L^{\infty}(\W)} < \infty$. Then, for $p = w \cdot \phi(w) \in \Gamma(\phi)$,
\begin{displaymath} |\phi^{(p^{-1})}(y,t) - \nabla^{\phi}\phi(w)y| \lesssim \max \{ \|(y,t)\|^{1+\alpha} , \|(y,t)\|^{1 + \frac{\alpha}{2} } \},
 \qquad (y,t) \in \W, \end{displaymath}
where the implicit constants only depend on $L$, and, if $\alpha >
0$, also on the H\"older continuity constant "$H$" in the
definition of $C^{1,\alpha}(\W)$.
\end{proposition}
We postpone the proof to Section \ref{s:regularity}.

\begin{remark} If $\alpha > 0$, Proposition \ref{p:affineApproximation} above shows that functions in $C^{1,\alpha}(\W)$ with $\nabla^{\phi}\phi \in L^{\infty}(\W)$ are \emph{uniformly intrinsically differentiable}, see \cite[Definition 3.16]{zbMATH05598806}. \end{remark}

The next lemma verifies that being an intrinsic $C^{1,\alpha}$-graph is a left-invariant concept.
\begin{lemma}\label{l:translation} Let $\alpha > 0$. Let $\phi \in C^{1,\alpha}(\W)$ with constant "$H$" as in Definition \ref{d:C1,alpha,W}, and write $\Gamma := \Gamma(\phi)$. Fix $q \in \He$, and consider the function $\phi^{(q)}$, which parametrises the graph $q \cdot \Gamma$. Then, $\phi^{(q)} \in C^{1,\alpha}(\W)$ with the same constant "$H$".
\end{lemma}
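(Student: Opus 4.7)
The plan is to reduce the translation-invariance to a simple composition rule for the parametrisations of left-translated intrinsic graphs, after which both the $C^{1}$ condition and the H\"older estimate \eqref{holder} transfer immediately with identical constants.

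First, I would verify the following composition rule: for every $q,p \in \He$,
\begin{equation*}
(\phi^{(q)})^{(p)} = \phi^{(p \cdot q)}.
\end{equation*}
Indeed, both sides parametrise the set $p \cdot q \cdot \Gamma(\phi)$; on the one hand $\Gamma(\phi^{(q)}) = q \cdot \Gamma(\phi)$, so $p \cdot \Gamma(\phi^{(q)}) = \Gamma((\phi^{(q)})^{(p)})$; on the other hand $p \cdot q \cdot \Gamma(\phi) = \Gamma(\phi^{(p\cdot q)})$. Since an intrinsic graph determines its parametrising function $\W \to \V$ uniquely, the two expressions coincide.

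Next, fix $p_0' \in \Gamma(\phi^{(q)}) = q \cdot \Gamma(\phi)$, and write $p_0' = q \cdot p_0$ with $p_0 \in \Gamma(\phi)$. Then $(p_0')^{-1} \cdot q = p_0^{-1}$, and so the composition rule gives
\begin{equation*}
(\phi^{(q)})^{((p_0')^{-1})} = \phi^{((p_0')^{-1} \cdot q)} = \phi^{(p_0^{-1})}.
\end{equation*}
From this identity every statement to be verified for $\phi^{(q)}$ reduces to a corresponding statement for $\phi$. In particular, $\phi^{(q)} \in C^{1}(\W)$: by the definition of the intrinsic differential, $d(\phi^{(q)})_{w_0'} = d((\phi^{(q)})^{((p_0')^{-1})})_{\mathbf{0}} = d(\phi^{(p_0^{-1})})_{\mathbf{0}} = d\phi_{w_0}$, where $w_0 := \pi_{\W}(q^{-1} \cdot p_0')$, so $\nabla^{\phi^{(q)}}\phi^{(q)}(w_0') = \nabla^{\phi}\phi(w_0)$; the map $w_0' \mapsto w_0$ is a homeomorphism of $\W$, so continuity of $\nabla^{\phi}\phi$ passes to $\nabla^{\phi^{(q)}}\phi^{(q)}$.

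Finally, to check the H\"older estimate \eqref{holder} for $\phi^{(q)}$ at an arbitrary $p_0' \in \Gamma(\phi^{(q)})$, write $p_0' = q \cdot p_0$ with $p_0 \in \Gamma(\phi)$ and use the identity $(\phi^{(q)})^{((p_0')^{-1})} = \phi^{(p_0^{-1})}$ to compute, for $w \in \W$,
\begin{align*}
& |\nabla^{(\phi^{(q)})^{((p_0')^{-1})}}(\phi^{(q)})^{((p_0')^{-1})}(w) - \nabla^{(\phi^{(q)})^{((p_0')^{-1})}}(\phi^{(q)})^{((p_0')^{-1})}(\mathbf{0})| \\
& \qquad = |\nabla^{\phi^{(p_0^{-1})}}\phi^{(p_0^{-1})}(w) - \nabla^{\phi^{(p_0^{-1})}}\phi^{(p_0^{-1})}(\mathbf{0})| \leq H\|w\|^{\alpha},
\end{align*}
where the last inequality is \eqref{holder} applied to $\phi$. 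Since $p_0 \in \Gamma(\phi)$ is arbitrary, so is $p_0' \in \Gamma(\phi^{(q)})$, and we conclude $\phi^{(q)} \in C^{1,\alpha}(\W)$ with the same constant $H$.

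The only substantive point is the composition rule, and the only potential subtlety is making sure that $q \cdot \Gamma(\phi)$ is still an intrinsic graph over $\W$ so that $\phi^{(q)}$ is well-defined as a map $\W \to \V$; this is a standard property of intrinsic graphs, already used in the definition preceding \eqref{eq:translated_function}. Everything else is bookkeeping.
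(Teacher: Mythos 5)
Your proof is correct and follows essentially the same route as the paper's: both rest on the composition identity $(\phi^{(q)})^{(p)} = \phi^{(p\cdot q)}$ for translated parametrisations, and both reduce the H\"older estimate for $\phi^{(q)}$ at $p_0' = q\cdot p_0$ to the estimate for $\phi$ at $p_0 \in \Gamma(\phi)$. You spell out the composition rule a bit more explicitly than the paper does, and you also add a short argument for why continuity of $\nabla^{\phi^{(q)}}\phi^{(q)}$ is preserved (via the homeomorphism $w_0'\mapsto w_0$), whereas the paper dispatches the $C^1$ part in one sentence by appealing to left-invariance of intrinsic differentiability; this is a welcome bit of extra rigor, not a deviation in approach.
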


\begin{proof} Let $\psi$ be the map $\phi^{(q)}$ which parametrises the translated graph $q \cdot \Gamma$. Since $\phi$ is by assumption everywhere intrinsically differentiable, and since intrinsic differentiability is a left-invariant notion, $\psi$ is intrinsically differentiable everywhere. It remains to verify for every $p_0 \in \Gamma(\psi)$ and for all $w\in \W$ that
\begin{equation}\label{eq:goal_psi} |\nabla^{\psi^{(p_{0}^{-1})}}\psi^{(p_{0}^{-1})}(w) - \nabla^{\psi^{(p_{0}^{-1})}}\psi^{(p_{0}^{-1})}(\mathbf{0})| \leq H\|w\|^{\alpha}. \end{equation}
By definition
$
\psi^{(p_0^{-1})} = (\phi^{(q)})^{(p_0^{-1})}
$
parametrises the graph $p_0^{-1} \cdot q \cdot \Gamma$ and hence $$\psi^{(p_0^{-1})}= \phi^{(p_0^{-1}\cdot q)}= \phi^{([q^{-1}\cdot p_0]^{-1})}.$$ Thus, denoting $p:= q^{-1}\cdot p_0$, the expression we wish to estimate, reads as follows:
\begin{displaymath}
 |\nabla^{\psi^{(p_{0}^{-1})}}\psi^{(p_{0}^{-1})}(w) - \nabla^{\psi^{(p_{0}^{-1})}}\psi^{(p_{0}^{-1})}(\mathbf{0})| =
 |\nabla^{\phi^{(p^{-1})}}\phi^{(p^{-1})}(w) - \nabla^{\phi^{(p^{-1})}}\phi^{(p^{-1})}(\mathbf{0})|.
\end{displaymath}
Since $p =  q^{-1}\cdot p_0$ is a point in $q^{-1} \cdot \Gamma(\psi) = q^{-1} \cdot q \cdot \Gamma = \Gamma$, the estimate \eqref{eq:goal_psi} then follows from the assumption $\phi \in C^{1,\alpha}(\mathbb{W})$, more precisely \eqref{holder} applied with $p$ instead of $p_0$.
\end{proof}

\subsubsection{Area formula}\label{s:areaFormula} We will need an area formula for functions $\phi \in C^{1}(\W)$. Since the intrinsic graphs of such functions are $H$-regular submanifolds by Remark \ref{r:Hregularity}, such a formula is available, due to Ambrosio, Serra Cassano and Vittone \cite{zbMATH05015569}. Specialised to our situation, the formula reads as follows: there exists a homogeneous left-invariant metric $d_{1}$ on $\He$ such that
\begin{equation}\label{form24} \calS^{3}_{d_{1}}(\Phi(\Omega)) = \int_{\Omega} \sqrt{1 + (\nabla^{\phi}\phi)^{2}} \, d\calL^{2} \end{equation}
for all $\phi \in C^{1}(\W)$ with graph map $\Phi$, and all open sets $\Omega \subset \W$. Here $\calS^{3}_{d_{1}}$ is the $3$-dimensional spherical measure defined via the distance $d_{1}$.
This is essentially the area formula we were after, but we will still record the following generalisation:

\begin{proposition}\label{p:area_formula}
There exists a left-invariant homogeneous distance $d_{1}$ on $\mathbb{H}$ such that the associated spherical Hausdorff measure $\mathcal{S}^3 = \calS^{3}_{d_{1}}$ satisfies
\begin{equation}\label{eq:area_form}
\int_{\Gamma} h \;d\mathcal{S}^3= \int_{\mathbb{W}} (h \circ \Phi) \sqrt{1+(\nabla^{\phi}\phi)^2}\, d\calL^{2}
\end{equation}
for every $\phi \in C^{1}(\W)$ with graph $\Gamma$, and for every $h\in L^1(\mathcal{S}^3|_{\Gamma})$. \end{proposition}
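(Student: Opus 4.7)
The plan is to promote the area formula \eqref{form24}, which is stated only for indicator functions of open sets $\Omega \subset \W$, to the general integral formula \eqref{eq:area_form} by a standard measure-theoretic density argument. Let $d_{1}$ be the homogeneous left-invariant distance on $\He$ supplied by \cite{zbMATH05015569}, and write $\calS^{3} := \calS^{3}_{d_{1}}$. Observe that the graph map $\Phi(w) = w \cdot \phi(w)$ is a continuous bijection from $\W$ onto $\Gamma = \Gamma(\phi)$, with continuous inverse $\pi_{\W}|_{\Gamma}$, so $\Phi$ is a homeomorphism between $(\W,d)$ and $(\Gamma,d)$.

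First I would introduce two Borel measures on $\W$, namely
\begin{displaymath}
\mu_{1}(E) := \calS^{3}(\Phi(E)) \quad \text{and} \quad \mu_{2}(E) := \int_{E} \sqrt{1 + (\nabla^{\phi}\phi)^{2}} \, d\calL^{2}.
\end{displaymath}
Both are locally finite Borel (hence Radon) measures on $\W \cong \R^{2}$: for $\mu_{2}$ this is immediate since $\nabla^{\phi}\phi$ is continuous (so the density is locally bounded), and for $\mu_{1}$ it follows by applying \eqref{form24} to any bounded open neighbourhood of a given compact set. By \eqref{form24}, $\mu_{1}(\Omega) = \mu_{2}(\Omega)$ for every open set $\Omega \subset \W$.

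Next I would invoke the standard uniqueness principle for Radon measures on $\R^{2}$: two locally finite Borel measures which agree on all open sets agree on the whole Borel $\sigma$-algebra (since every Borel set can be sandwiched between open supersets and compact subsets of equal measure via outer and inner regularity). Thus $\mu_{1} = \mu_{2}$. Equivalently, since $\Phi$ is a Borel isomorphism from $\W$ onto $\Gamma$, this identity can be reformulated as the pushforward relation
\begin{displaymath}
\Phi_{\#}\mu_{2} = \calS^{3}|_{\Gamma}
\end{displaymath}
on $\Gamma$.

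Finally, given $h \in L^{1}(\calS^{3}|_{\Gamma})$, the abstract change-of-variables formula for pushforward measures yields
\begin{displaymath}
\int_{\Gamma} h \, d\calS^{3} = \int_{\Gamma} h \, d(\Phi_{\#}\mu_{2}) = \int_{\W} (h \circ \Phi) \, d\mu_{2} = \int_{\W} (h \circ \Phi)\sqrt{1 + (\nabla^{\phi}\phi)^{2}} \, d\calL^{2},
\end{displaymath}
which is \eqref{eq:area_form}. One could establish the last chain of equalities first for non-negative simple functions and extend by monotone convergence, then split a general $h$ into positive and negative parts, but this is routine. The only genuine ingredient beyond bookkeeping is the area formula \eqref{form24} from \cite{zbMATH05015569}; the main thing to verify carefully is simply that $\mu_{1}$ is locally finite so that Radon-measure uniqueness is applicable, and this is immediate from \eqref{form24} itself.
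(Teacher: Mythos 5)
Your argument is correct and is exactly the ``standard approximation'' that the paper's one-line proof gestures at: you promote \eqref{form24} from open sets to all Borel sets by uniqueness of Radon measures agreeing on open sets, recast it as the pushforward identity $\Phi_{\#}\mu_{2} = \calS^{3}|_{\Gamma}$, and then apply the abstract change-of-variables for pushforwards. This matches the paper's approach; you have simply filled in the bookkeeping the authors leave implicit.
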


\begin{proof} This is standard: for any bounded open set $\Omega \subset \W$, formula \eqref{form24} implies the claim for the function $h = \chi_{\Phi(\Omega)}$. The formula for arbitrary $L^1$-functions $h$ follows by approximation. \end{proof}

\section{Proof of the main theorem}\label{s:mainProof} In this section, we prove the main result, Theorem \ref{mainIntro}, or rather its variant, Theorem \ref{mainABC}; recall that this is sufficient by the discussion preceding the statement of Theorem \ref{mainABC}. We fix the following data:
\begin{itemize}
\item a $3$-dimensional CZ kernel $K$ satisfying the AB condition,
\item a vertical subgroup $\W$ with complementary horizontal subgroup $\V$ (we will assume without loss of generality that $\W$ is the $(y,t)$-plane and $\V$ is the $x$-axis),
\item a function $\phi \in C^{1,\alpha}(\W)$, $\alpha > 0$, with compact support (recall Definition \ref{d:C1,alpha,W}), and
\item a $3$-ADR measure $\mu$ on $\Gamma := \Gamma(\phi)$.
\end{itemize}
The task is to show that the singular integral operator $T$ associated to $K$ is bounded on $L^{2}(\mu)$:
\begin{equation}\label{form3} \|T_{\mu,\epsilon}f\|_{L^{2}(\mu)} \leq A\|f\|_{L^{2}(\mu)}, \qquad f \in L^{2}(\mu), \: \epsilon > 0, \end{equation}
where $A \geq 1$ is a constant depending on the data above, but not on $\epsilon$. The first reduction is the following easy lemma:
\begin{lemma} Assume that \eqref{form3} holds for some $3$-ADR measure $\mu$ on $\Gamma$. Then \eqref{form3} holds (with possibly a different constant) for any $3$-ADR measure $\tilde{\mu}$ with $\spt \tilde{\mu} \subset \spt \mu$.
\end{lemma}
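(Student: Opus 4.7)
The plan is to exhibit $\tilde{\mu}$ as a bounded, absolutely continuous modification of $\mu$, and then transfer the $L^{2}(\mu)$-bound by changing variables. Since the kernel $K$ does not depend on the measure, the operator $T_{\tilde\mu,\epsilon}$ should essentially coincide with $T_{\mu,\epsilon}$ applied to $f$ multiplied by the density of $\tilde\mu$ against $\mu$, and this observation drives the whole argument.

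The first step is to show $\tilde\mu \leq C'\mu$ as measures, with $C'$ depending only on the ADR constants of $\mu$ and $\tilde\mu$. For this, given a Borel set $A$ and an open $U \supset A \cap \spt\tilde\mu$, I would apply a $5B$-covering of $A \cap \spt\tilde\mu$ by balls $B(p_{i}, r_{i})$ with centres $p_{i} \in \spt\tilde\mu \subset \spt\mu$ and $B(p_{i}, r_{i}) \subset U$; the upper ADR bound for $\tilde\mu$, applied at centres in $\spt\tilde\mu$, together with the lower ADR bound for $\mu$, applied at the same centres, yields
\begin{displaymath} \tilde\mu(A) = \tilde\mu(A \cap \spt\tilde\mu) \leq \sum_{i} \tilde\mu(B(p_{i}, 5 r_{i})) \lesssim \sum_{i} r_{i}^{3} \lesssim \sum_{i}\mu(B(p_{i}, r_{i})) \leq \mu(U), \end{displaymath}
and outer regularity completes the estimate. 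The Heisenberg group is doubling, so the $5B$-covering theorem is at our disposal. From $\tilde\mu \leq C'\mu$ and Radon--Nikodym I would then obtain $d\tilde\mu = g\,d\mu$ with a Borel density $g$ satisfying $0 \leq g \leq C'$, which we may arrange to vanish on $\spt\mu \setminus \spt\tilde\mu$.

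With this comparison in hand, the rest is formal. Given $f \in L^{2}(\tilde\mu)$, extend $f$ by zero to $\spt\mu$ and set $F := f g$; a one-line computation gives $\|F\|_{L^{2}(\mu)}^{2} = \int |f|^{2} g \, d\tilde\mu \leq C'\|f\|_{L^{2}(\tilde\mu)}^{2}$, and the identity $T_{\mu,\epsilon}F(p) = T_{\tilde\mu,\epsilon}f(p)$ holds pointwise on $\He$ because the two defining integrals become identical once $g\,d\mu$ is rewritten as $d\tilde\mu$. Bounding
\begin{displaymath} \|T_{\tilde\mu,\epsilon}f\|_{L^{2}(\tilde\mu)}^{2} = \int |T_{\mu,\epsilon}F|^{2} g \, d\mu \leq C'\|T_{\mu,\epsilon}F\|_{L^{2}(\mu)}^{2} \leq C'A^{2}\|F\|_{L^{2}(\mu)}^{2} \leq (C')^{2}A^{2}\|f\|_{L^{2}(\tilde\mu)}^{2}, \end{displaymath}
and invoking \eqref{form3} for $\mu$ in the middle step yields \eqref{form3} for $\tilde\mu$ with constant $C'A$. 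The only mildly non-trivial ingredient is the $5B$-covering step in the second paragraph, needed solely to compare $\tilde\mu$ with $\mu$; everything else is bookkeeping, and nothing deeper about the Heisenberg geometry or the kernel $K$ enters the argument.
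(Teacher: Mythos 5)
Your proof is correct, and it takes a mildly different route to the same key fact. Where the paper writes both $\mu$ and $\tilde\mu$ as densities against $\calH^{3}|_{\Gamma}$ (using that $\calH^{3}|_{\Gamma}$ is itself $3$-ADR) and then invokes Lebesgue differentiation in the doubling space $(\Gamma,\calH^{3})$ to show that $\varphi_{\mu} \sim \chi_{\spt\mu}$ and $\varphi_{\tilde\mu} \sim \chi_{\spt\tilde\mu}$, so that $d\tilde\mu = g\,d\mu$ with $g \sim \chi_{\spt\tilde\mu}$, you instead compare $\tilde\mu$ and $\mu$ directly: a $5B$-covering with centres in $\spt\tilde\mu \subset \spt\mu$, combined with the upper ADR bound for $\tilde\mu$ and the lower ADR bound for $\mu$ at those centres, gives $\tilde\mu \leq C'\mu$, and Radon--Nikodym then produces a bounded density. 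Your version is somewhat more self-contained: it avoids Lebesgue differentiation and does not require knowing that $\calH^{3}|_{\Gamma}$ is ADR -- only the ADR properties of $\mu$ and $\tilde\mu$ enter. The paper's route yields the extra information that $g$ is also bounded below on $\spt\tilde\mu$, but, as you noticed, that lower bound is never used; both arguments close with the same pointwise identity $T_{\tilde\mu,\epsilon}f = T_{\mu,\epsilon}(fg)$ and the resulting chain of $L^{2}$-inequalities, which only require $\|g\|_{L^\infty(\mu)} < \infty$. One small point to keep in mind in your covering step: the lower ADR bound for $\mu$ and upper ADR bound for $\tilde\mu$ are only guaranteed for radii up to $\diam(\spt\mu)$ and $\diam(\spt\tilde\mu)$ respectively, so one should either take the initial covering radii small, or observe that the inequalities extend trivially to large radii; this is routine and does not affect the argument.
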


\begin{proof} Since $\mu$ is $3$-ADR, we clearly have $\mu = \varphi_{\mu} \,d \calH^{3}|_{\Gamma}$ for some $\varphi_{\mu} \in L^{\infty}(\calH^{3}|_{\Gamma})$.
Recall that $\calH^{3}|_{\Gamma}$ is $3$-ADR by \cite[Theorem 3.9]{MR3511465} since $\Gamma$ is the intrinsic graph of an intrinsic Lipschitz function according to Remark \ref{r:compactSupport}.
Therefore, $(\Gamma,\calH^{3})$ is a doubling metric measure space, where Lebesgue's differentiation theorem holds.
Thus
we also have that
\begin{displaymath} 1 \lesssim \limsup_{r \to 0} \frac{\mu(B(p,r))}{\calH^{3}(\Gamma \cap B(p,r))} = \limsup_{r \to 0} \frac{1}{\calH^{3}(\Gamma \cap B(p,r))} \int_{B(p,r) \cap \Gamma} \varphi_{\mu} \, d\calH^{3} = \varphi_{\mu}(p) \end{displaymath}
for $\calH^{3}$ almost every $p \in \spt \mu$, and in particular for $\mu$ almost every $p \in \spt \mu$. So, in the metric measure space $(\Gamma,\calH^{3})$, we have $\varphi_{\mu} \sim \chi_{\spt \mu}$. The same holds for $\tilde{\mu}$, by the same argument. Since $\spt \tilde{\mu} \subset \spt \mu$, it follows that we may write $\tilde{\mu} = g \, d\mu$ for some $g \in L^{\infty}(\mu)$ with $g \sim \chi_{\spt \tilde{\mu}}$. With this notation, we have
\begin{align*} T_{\tilde{\mu},\epsilon}f(p) & = \int_{\|q^{-1} \cdot p\| > \epsilon} f(q) K(q^{-1} \cdot p) \, d\tilde{\mu} (q)\\
& = \int_{\|q^{-1} \cdot p\| > \epsilon} f(q)g(q) K(q^{-1} \cdot p) \, d\mu (q) = T_{\mu,\epsilon}(fg)(p), \end{align*}
Finally,
\begin{displaymath} \|T_{\tilde{\mu},\epsilon}f(p)\|_{L^{2}(\tilde{\mu})} \lesssim \|T_{\mu,\epsilon}(fg)\|_{L^{2}(\mu)} \leq A\|fg\|_{L^{2}(\mu)} = A\|f\|_{L^{2}(\tilde{\mu})},  \end{displaymath}
as claimed. \end{proof}
The point of the lemma is that if we manage to prove \eqref{form3} for any single $3$-ADR measure $\mu$ with $\spt \mu = \Gamma$, then the same will follow for all $3$-ADR measures supported on $\Gamma$. In particular, it suffices to prove \eqref{form3} for the measure
\begin{equation}\label{muDef} \mu := \calS^{3}_{d_{1}}|_{\Gamma}, \end{equation}
which satisfies the area formula, Proposition \ref{p:area_formula}.

For this measure $\mu$, we prove \eqref{form3} by verifying the conditions of a suitable $T1$ theorem. To state these conditions, we use a system of dyadic cubes on $\Gamma$.
\subsection{Christ cubes and the $T1$ theorem}\label{s:T1} The following construction is due to Christ \cite{MR1096400}. For $j \in \Z$, there exists a family $\Delta_{j}$ of disjoint subsets of $\Gamma$ with the following properties:
\begin{itemize}
\item[(C0)] $\Gamma \subset \bigcup_{Q \in \Delta_{j}} \overline{Q}$,
\item[(C1)] If $j \leq k$, $Q \in \Delta_{j}$ and $Q' \in \Delta_{k}$, then either $Q \cap Q' = \emptyset$, or $Q \subset Q'$.
\item[(C2)] If $Q \in \Delta_{j}$, then $\diam Q \leq 2^{j} =: \ell(Q)$.
\item[(C3)] Every cube $Q \in \Delta_{j}$ contains a ball $B(z_{Q},c2^{j}) \cap \Gamma$ for some $z_{Q} \in Q$, and some constant $c > 0$.
\item[(C4)] Every cube $Q \in \Delta_{j}$ has thin boundary: there is a constant $D \geq 1$ such that $\mu(\partial_{\rho} Q) \leq D\rho^{\tfrac{1}{D}}\mu(Q)$, where
\begin{displaymath} \partial_{\rho} Q := \{q \in Q : \dist(q,\Gamma \setminus Q) \leq \rho \cdot \ell(Q)\}, \qquad \rho > 0. \end{displaymath}
\end{itemize}
The sets in $\Delta := \cup \Delta_{j}$ are called \emph{Christ cubes} (sometimes also \emph{David cubes} in the literature), or
just dyadic cubes, of $\Gamma$. It follows from (C2), (C3), and the $3$-regularity of $\mu$ that $\mu(Q) \sim \ell(Q)^{3}$ for $Q \in \Delta_{j}$.

To prove the $L^{2}$ boundedness of a CZ operator $T$ on $L^{2}(\mu)$, it suffices to verify the following conditions for a fixed system $\Delta$ of dyadic cubes on $\Gamma$:
\begin{equation}\label{T1} \|T_{\mu,\epsilon}\chi_{R}\|_{L^{2}(\mu|_{R})}^{2} \leq A\mu(R) \quad \text{and} \quad \|T_{\mu,\epsilon}^{\ast}\chi_{R}\|_{L^{2}(\mu|_{R})}^{2} \leq A\mu(R) \end{equation}
for all $R \in \Delta$, where $T_{\mu,\epsilon}^{\ast}$ is the formal adjoint of $T_{\mu,\epsilon}$, and $A \geq 1$ is a constant independent of $\epsilon$ and $R$. These conditions suffice for \eqref{form3} by the $T1$ theorem of David and Journ\'e, applied in the homogeneous metric measure space $(\Gamma,d,\mu)$, see \cite[Theorem 3.21]{tolsabook}. The statement in Tolsa's book is only formulated in Euclidean spaces, but the proof works the same way in homogeneous metric measure spaces; the details can be found in the honors thesis of Surath Fernando \cite{Fer}.

\subsection{Verifying the $T1$ testing condition}
\label{sec:vert1}
In this section, we use the $T1$ theorem to prove Theorem \ref{mainABC} (hence Theorem \ref{mainIntro}). The notation $\alpha,K,\phi,\Gamma,\W$ refers to the data fixed at the head of Section \ref{s:mainProof}, and $\mu$ is the measure in \eqref{muDef}. We start by remarking that it is sufficient to verify the first testing  condition, namely
\begin{equation}\label{T1_a}
 \|T_{\mu,\epsilon}\chi_{R}\|_{L^{2}(\mu|_{R})}^{2} \leq A\mu(R),\quad\text{for all } R\in \Delta,
 \end{equation}
This follows from the simple observation that $T^{\ast}$ has the same form as $T$, so our proof below for $T$ would equally well work for $T^{\ast}$. Let us be a bit more precise. Remark \ref{r:adjoint} shows that the kernel $K^{\ast}$ associated with the formal adjoint $T^{\ast}$ is also a CZ kernel. Moreover, since (i) the functions $\psi$ appearing in Definition \ref{abc} are radial, (ii) $\|w\|=\|w^{-1}\|$, and (iii) the measure $\mathcal{L}^2$ is invariant under the transformation $w\mapsto w^{-1}$ on $\W$, it follows that $K^{\ast}$ satisfies the AB condition with the same constant as $K$.

The first step in the proof of \eqref{T1_a} is a Littlewood-Paley decomposition of the operator $T$; the details in the Heisenberg group appeared in \cite{CL}, and we copy them nearly verbatim. Fix a smooth even function $\psi \colon \R \to \R$ with $\chi_{B(0,1/2)} \leq \psi \leq \chi_{B(0,2)}$, and then define the ($\He$-)radial functions $\psi_{j} \colon \He \to \R$ by
\begin{displaymath} \psi_{j}(p) := \psi(2^{j}\|p\|), \qquad j \in \Z. \end{displaymath}
Next, write $\eta_{j} := \psi_{j} - \psi_{j + 1}$ and $K_{(j)} := \eta_{j}K$, so that
\begin{equation}\label{supportKj} \spt K_{(j)} \subset B(\mathbf{0},2^{1 - j}) \setminus B(\mathbf{0},2^{-2 - j}). \end{equation}
We now consider the operators
\begin{displaymath} T_{(j)}f(p) = \int K_{(j)}(q^{-1} \cdot p)f(q) \, d\mu(q) \quad \text{and} \quad  S_{N} := \sum_{j \leq N} T_{(j)}, \end{displaymath}
\begin{remark}\label{r:K_j_Hol}
It is easy to check that the kernel $K_{(j)}$ satisfies the same growth and H\"older continuity estimates, namely \eqref{form0}, as $K$. In particular, by \eqref{supportKj} $K_{(j)} \in L^{\infty}(\He)$ with $\|K_{(j)}\|_{L^{\infty}(\He)} \lesssim 2^{3j}$.
\end{remark}
The next lemma demonstrates that $T_{\mu,\epsilon}$ and $S_{N}$ are very close to each other, for $\epsilon \in [2^{-N},2^{-N + 1})$:
\begin{lemma}
\label{l:smoothcomp} Fix $N \in \Z$ and $\epsilon \in [2^{-N},2^{-N + 1})$. Then
\begin{displaymath} |S_{N}f(p) - T_{\mu,\epsilon}f(p)| \lesssim M_{\mu}f(p), \qquad f \in L_{loc}^{1}(\mu), \end{displaymath}
where $M_{\mu}$ is the centred Hardy-Littlewood maximal function associated with $\mu$. \end{lemma}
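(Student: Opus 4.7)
The natural opening move is to telescope the sum defining $S_N$. Since $\eta_j = \psi_j - \psi_{j+1}$, and $\psi_j(z) = \psi(2^j \|z\|) \to \psi(0) = 1$ as $j \to -\infty$ for every fixed $z \in \He \setminus \{\mathbf{0}\}$, pointwise summation gives $\sum_{j \leq N} \eta_j(z) = 1 - \psi_{N+1}(z)$. The annular support \eqref{supportKj} of $K_{(j)}$ ensures that for fixed $p \neq q$ only finitely many terms of $\sum_{j \leq N} K_{(j)}(q^{-1} \cdot p)$ are nonzero, so the plan is to interchange sum and integral and rewrite
\begin{equation*}
 S_N f(p) = \int (1 - \psi_{N+1}(q^{-1} \cdot p)) K(q^{-1} \cdot p) f(q) \, d\mu(q).
\end{equation*}
(Since $\phi$ has compact support, $\Gamma$ is bounded, so $\mu$ has bounded support and convergence issues at infinity are vacuous.)

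Subtracting $T_{\mu,\epsilon} f(p)$, the difference takes the form $\int \Delta_N(q^{-1} \cdot p) K(q^{-1} \cdot p) f(q) \, d\mu(q)$, where $\Delta_N(z) := (1 - \psi_{N+1}(z)) - \chi_{\{\|z\| > \epsilon\}}$. The key observation will be that $\Delta_N$ is supported in a single dyadic annulus at scale $2^{-N}$. Indeed, the hypothesis $\epsilon \in [2^{-N}, 2^{-N+1})$ combined with $\psi_{N+1} \equiv 1$ on $\{\|z\| \leq 2^{-N-2}\}$ and $\psi_{N+1} \equiv 0$ on $\{\|z\| \geq 2^{-N+1}\}$ forces $\Delta_N \equiv 0$ outside $A := \{z \in \He : 2^{-N-2} \leq \|z\| \leq 2^{-N+1}\}$: inside the inner ball both terms of $\Delta_N$ vanish, while outside the outer ball both equal $1$.

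On $A$ one has $|\Delta_N| \leq 1$ and $|K(z)| \lesssim \|z\|^{-3} \lesssim 2^{3N}$, whence
\begin{equation*}
 |S_N f(p) - T_{\mu,\epsilon} f(p)| \lesssim 2^{3N} \int_{B(p, 2^{-N+1})} |f(q)| \, d\mu(q).
\end{equation*}
The $3$-ADR property of $\mu = \calS^3_{d_1}|_\Gamma$ (recall $\Gamma$ is an intrinsic Lipschitz graph by Remark \ref{r:compactSupport}, so $\calH^3|_\Gamma$, and hence $\mu$, is $3$-ADR by \cite[Theorem 3.9]{MR3511465}) yields $\mu(B(p, 2^{-N+1})) \lesssim 2^{-3N}$, so the right-hand side is comparable to $\mu(B(p, 2^{-N+1}))^{-1} \int_{B(p, 2^{-N+1})} |f| \, d\mu$, which is controlled by $M_\mu f(p)$. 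No conceptual obstacle is anticipated: the argument is bookkeeping around a Littlewood--Paley decomposition engineered so that the smooth cutoff $\psi_{N+1}$ is compatible with the sharp truncation at level $\epsilon \sim 2^{-N}$.
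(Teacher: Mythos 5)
Your proof is correct and follows essentially the same route as the paper: telescope $\sum_{j\le N}\eta_j = 1-\psi_{N+1}$, observe that the difference $(1-\psi_{N+1}) - \chi_{\|\cdot\|>\epsilon}$ is supported in the annulus $2^{-N-2}\le\|z\|\le 2^{-N+1}$, then use the growth bounds on $K$ and $\mu$ to compare with $M_\mu f(p)$. One tiny slip: in the last step you say the right-hand side is ``comparable to'' $\mu(B(p,2^{-N+1}))^{-1}\int_{B(p,2^{-N+1})}|f|\,d\mu$, but only $\lesssim$ is available (and needed) from the upper growth $\mu(B(p,r))\lesssim r^3$; the two-sided ADR estimate is not required for this inequality.
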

\begin{proof} We first observe that
\begin{align*}\sum_{j \leq N} K_{(j)}(p) & = K(p) \sum_{j \leq N} \eta_{j}(p) = K(p) \cdot (1 - \psi_{N + 1}(p)), \qquad p \in \He \setminus \{\mathbf{0}\}. \end{align*}
Hence, for $\epsilon \in [2^{-N},2^{-N + 1})$,
\begin{align*} |S_{N}f(p) - T_{\mu,\epsilon}f(p)| & = \left|\int K(q^{-1} \cdot p)[(1 - \psi_{N + 1}) - \chi_{\He \setminus B(0,\epsilon)}](q^{-1} \cdot p)f(q) \, d\mu (q) \right|\\
& \leq \int_{B(p,2^{-N + 1}) \setminus B(p,2^{-N - 2})} |K(q^{-1} \cdot p)||f(q)| \, d\mu (q)\\
& \lesssim \frac{1}{\mu(B(p,2^{-N + 1}))} \int_{B(p,2^{-N + 1})} |f(q)| \, d\mu (q) \lesssim M_{\mu}f(p). \end{align*}
using the growth condition \eqref{form0}. \end{proof}
By the lemma above, and the $L^{2}$-boundedness of $M_{\mu}$, we have
\begin{displaymath} \|T_{\mu,\epsilon}\chi_{R}\|_{L^{2}(\mu|_{R})} \lesssim \|M_{\mu}\chi_{R}\|_{L^{2}(\mu)} + \|S_{N}\chi_{R}\|_{L^{2}(\mu|_{R})} \lesssim \mu(R)^{1/2} + \|S_{N}\chi_{R}\|_{L^{2}(\mu|_{R})}. \end{displaymath}
So, it remains to prove the following proposition:
\begin{proposition}\label{p:T1} For any $R \in \Delta$ and $N \in \Z$, we have $\|S_{N}\chi_{R}\|_{L^{2}(\mu|_{R})}^{2} \lesssim \mu(R)$. \end{proposition}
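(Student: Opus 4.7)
The plan is a $T1$-testing argument in which the graph $\Gamma$ is compared, scale-by-scale, to a tangent vertical subgroup at a fixed base point. By the remark opening Section \ref{sec:vert1}, only the first inequality in \eqref{T1} needs verification. Using left-invariance (Lemma \ref{l:translation}) I may translate $R$ so its Christ centre $z_R$ sits at $\mathbf{0}$, replacing $\phi$ by $\tilde\phi := \phi^{(z_R^{-1})} \in C^{1,\alpha}(\W)$ with $\tilde\phi(\mathbf 0)=\mathbf 0$ and the same constants. By the first lemma of Section \ref{s:mainProof}, I may take $\mu = \calS^3_{d_1}|_{\Gamma(\tilde\phi)}$, so that the area formula (Proposition \ref{p:area_formula}) is at our disposal.

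Set $c := \nabla^{\tilde\phi}\tilde\phi(\mathbf 0)$, $\tilde\Phi^{\ast}(w) := w \cdot (cw_1)$, and let $\W^{\ast} := \tilde\Phi^{\ast}(\W)$ be the resulting vertical subgroup, tangent to $\Gamma(\tilde\phi)$ at $\mathbf 0$. Define the tangent-plane projection
\begin{displaymath}
\Psi := \tilde\Phi^{\ast} \circ \tilde\Phi^{-1}\colon \Gamma(\tilde\phi) \to \W^{\ast},
\end{displaymath}
which by Proposition \ref{p:affineApproximation} satisfies $d(q,\Psi(q)) = |\tilde\phi(w) - cw_1| \lesssim \|w\|^{1+\alpha}$ for $q = \tilde\Phi(w)$. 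Via the area formulas for $\Gamma(\tilde\phi)$ (Jacobian $J = \sqrt{1+(\nabla^{\tilde\phi}\tilde\phi)^2}$) and for the linear graph $\W^{\ast}$ (Jacobian $J^{\ast} = \sqrt{1+c^2}$), I can rewrite both $S_N\chi_R(p)$ and $\tilde{S}_N^{\ast}\chi_{\Psi(R)}(\Psi(p))$ as integrals over the same set $\tilde\Phi^{-1}(R) \subset \W$, differing only in the argument of $K^N := K(1-\psi_{N+1})$ (through $\tilde\Phi$ vs.\ $\tilde\Phi^{\ast}$) and in the Jacobian factor ($J$ vs.\ $J^{\ast}$). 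Here $\tilde S_N^{\ast}$ denotes the analogous smoothly truncated SIO on $(\W^{\ast},\calH^3|_{\W^{\ast}})$.

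The main term $\tilde{S}_N^{\ast}\chi_{\Psi(R)}$ is controlled directly by UBVP: uniform $L^2(\calH^3|_{\W^{\ast}})$-boundedness of $\tilde S_N^{\ast}$ (inherited from the AB condition together with the maximal-function comparison of Lemma \ref{l:smoothcomp}) yields
\begin{displaymath}
\|\tilde S_N^{\ast} \chi_{\Psi(R)}\|_{L^2(\calH^3|_{\W^{\ast}})}^2 \lesssim \calH^3|_{\W^{\ast}}(\Psi(R)) \sim \mu(R),
\end{displaymath}
where the comparability of measures uses $J,J^{\ast} \sim 1$ (which in turn follows from $\nabla^{\tilde\phi}\tilde\phi \in L^{\infty}$, ensured by Remark \ref{r:compactSupport}). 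Transporting back through $\Psi$, whose Radon--Nikodym derivative between $\Psi_{\ast}\mu$ and $\calH^3|_{\W^{\ast}}$ is $J/J^{\ast} \sim 1$, this implies $\|p \mapsto \tilde S_N^{\ast}\chi_{\Psi(R)}(\Psi(p))\|_{L^2(\mu|_R)}^2 \lesssim \mu(R)$.

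It remains to bound the pointwise error $E_N(p) := S_N\chi_R(p) - \tilde S_N^{\ast}\chi_{\Psi(R)}(\Psi(p))$ uniformly in $p \in R$ and $N$. Splitting $E_N$ scale-by-scale via $K^N = \sum_{j \leq N} K_{(j)}$, and separating the kernel difference from the Jacobian difference, the $j$-th kernel contribution is estimated using Lemma \ref{l:standardHolder} and Remark \ref{r:K_j_Hol}: combining $d(\tilde\Phi(w),\tilde\Phi^{\ast}(w)) \lesssim \|w\|^{1+\alpha}$ with the H\"older continuity of $K_{(j)}$ produces an integrand bounded by $2^{3j}\cdot 2^{-j\alpha\beta/2}$ on the annulus $\{\|w\|\sim 2^{-j}\}$; integration over this annulus (of $\calL^2$-measure $\sim 2^{-3j}$) yields $\lesssim 2^{-j\alpha\beta/2}$, which is geometrically summable over $j \leq N$. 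The Jacobian-difference term is handled in the same way using $|J(w)-J^{\ast}| \lesssim \|w\|^{\alpha}$ from $\tilde\phi \in C^{1,\alpha}(\W)$. The main technical hurdle, as I see it, will be the careful matching of integration domains under $\Psi$ together with uniform control of its Jacobian on the scale of $R$; once these bookkeeping steps are in place, the geometric estimates and the UBVP bound combine by the triangle inequality to give $\|S_N\chi_R\|_{L^2(\mu|_R)}^2 \lesssim \mu(R)$.
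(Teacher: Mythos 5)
There is a genuine gap: you approximate $\Gamma$ by a \emph{single} tangent vertical plane $\W^{\ast}$ at the centre $z_R$ (normalised to $\mathbf{0}$) and then claim to bound the error $E_N(p)$ uniformly over all $p\in R$ by a scale-by-scale estimate ``on the annulus $\{\|w\|\sim 2^{-j}\}$''. But the kernel $K_{(j)}(\tilde\Phi(w)^{-1}\cdot p)$ for a general $p\in R$ is supported where $\|\tilde\Phi(w)^{-1}\cdot p\|\sim 2^{-j}$, i.e.\ on an annulus around $w_p=\pi_{\W}(p)$, not around $\mathbf{0}$. On that annulus $\|w\|\sim\|w_p\|$, which is of order $\ell(R)$ rather than $2^{-j}$, so Proposition~\ref{p:affineApproximation} only gives $d(\tilde\Phi(w),\tilde\Phi^{\ast}(w))\lesssim\ell(R)^{1+\alpha}$ --- a fixed quantity that does not decay as $j\to\infty$. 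Consequently the H\"older estimate for $K_{(j)}$ does not even apply once $2^{-j}\ll\ell(R)^{1+\alpha}$ (the perturbation is larger than the distance scale), and the claimed integrand bound $2^{3j}\cdot2^{-j\alpha\beta/2}$ fails; the same problem hits the Jacobian-difference term, where $|J(w)-J^{\ast}|\lesssim\|w\|^{\alpha}\sim\ell(R)^{\alpha}$ contributes $\sim\ell(R)^{\alpha}\log(\ell(R)2^N)$, which is unbounded. The paper avoids this by re-translating so that \emph{each} $p$ sits at the origin before choosing the tangent plane (via Lemma~\ref{l:translation} and \eqref{eq:S_I_small_translate}); then the relevant annulus really is $\{\|(y,t)\|\sim 2^{-j}\}$ and the error and Jacobian bounds $2^{-j(1+\alpha)}$, $2^{-j\alpha}$ are geometrically summable.

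Two further issues are glossed over. First, you do not treat the scales $2^{-j}\gtrsim\dist(p,\Gamma\setminus R)$, where $\chi_R$ cannot be dropped from the integral; the paper handles these with the decomposition $R=\bigcup_k\partial_{\rho(k)}R$, the thin-boundary property (C4), and the crude bound $|T_{(j)}\chi_R|\lesssim1$ ($S_{III}$). Second, the tangent-plane comparison only controls scales $2^{-j}\leq 1$; for the remaining range $1<2^{-j}\leq\ell(R)$ the paper invokes the compact support of $\phi$ to show $\Gamma$ coincides with $\W$ away from a fixed ball ($S_{II}$). Both pieces are needed, and neither is present in your outline. Your use of the area formula and the AB/UBVP cancellation on the tangent plane matches the paper's strategy in spirit, but the per-$p$ translation and the boundary/large-scale decompositions are essential, not ``bookkeeping''.
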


\begin{proof} We fix $R \in \Delta$ and $N \in \N$ for the rest of the proof. We start with the estimate
\begin{equation}\label{eq:start} \|S_N\chi_{R}\|_{L^{2}(\mu|_{R})} \leq \Big\| \sum_{2^{-N} \leq 2^{-j} \leq 4\ell(R)} T_{(j)}\chi_{R} \Big\|_{L^{2}(\mu|_{R})} + \sum_{2^{-j} > 4\ell(R)} \|T_{(j)}\chi_{R}\|_{L^{2}(\mu|_{R})} \end{equation}
We quickly deal with the terms in the second sum. Recall from \eqref{supportKj} that the support of
\begin{displaymath} q \mapsto K_{(j)}(q^{-1} \cdot p), \qquad p \in R, \end{displaymath}
is contained in $\He \setminus B(p,2^{-2 - j}) \subset \He \setminus R$, assuming $2^{-j} > 4\ell(R)$. Hence $T_{(j)}\chi_{R}(p) = 0$ for all $p \in R$ and $2^{-j} > 4\ell(R)$. So,
\begin{equation}\label{eq:start2}\sum_{2^{-j} > 4\ell(R)} \|T_{(j)}\chi_{R}\|_{L^{2}(\mu|_{R})} = 0. \end{equation}
Thus, it remains to study the term
\begin{displaymath} \|S \chi_{R} \|_{L^{2}(\mu|_{R})}, \quad \text{where} \quad S := \sum_{2^{-N} \leq 2^{-j} \leq 4\ell(R)} T_{(j)}. \end{displaymath}
We stress that the operator $S$ depends both on $N$ and $R$, but to avoid heavy notation, we refrain from explicitly marking this dependence. All the implicit multiplicative constants which appear in the following estimates will be uniform in $N$ and $R$.

The strategy for bounding $\|S \chi_{R}\|_{L^{2}(\mu|_{R})}$ is straightforward: we fix a point $p \in R$, and attempt to find an estimate for $S \chi_{R}(p)$. The quality of this estimate will depend on the choice of $p$ as follows: the closer  $p$ is to the ``boundary'' of $R$, in the sense that $\dist(p,\Gamma \setminus R)$ is small, the worse the estimate.
Motivated by this discussion, we define
\begin{displaymath} \partial_{\rho} R = \{q \in R : (\rho/2) \cdot \ell(R) < \dist(q,\Gamma \setminus R) \leq \rho \cdot \ell(R)\}, \qquad \rho > 0, \end{displaymath}
and recall that $\mu(\partial_{\rho} R) \lesssim \rho^{\tfrac{1}{D}}\mu(R)$ by (C4) from Section \ref{s:T1} (the notation we use here is a little different from Section \ref{s:T1}, in that we impose a two-sided inequality in the definition of $\partial_{\rho} R$). Also note that $\partial_{\rho} R = \emptyset$ for $\rho > 2$. Write $\rho(k) := 2^{1 - k}/\ell(R)$, and decompose $\|S \chi_{R}\|_{L^{2}(\mu|_{R})}^{2}$ as follows:
\begin{displaymath} \|S \chi_{R}\|_{L^{2}(\mu|_{R})}^{2} = \sum_{2^{-k} \leq \ell(R)} \int_{\partial_{\rho(k)} R} |S \chi_{R}(p)|^{2} \, d\mu(p) =: \sum_{2^{-k} \leq \ell(R)} I_{k}. \end{displaymath}
The task is now to estimate the terms $I_{k}$ separately. Note that whenever $p \in \partial_{\rho(k)} R$, then
\begin{equation}\label{form17} T_{(j)}\chi_{R}(p) = \int_{R} K_{(j)}(q^{-1} \cdot p) \, d\mu(q)  = \int_{\Gamma} K_{(j)}(q^{-1} \cdot p) \, d\mu(q), \quad j > k, \end{equation}
since the support of $q \mapsto K_{(j)}(q^{-1} \cdot p)$ lies in $B(p,2^{1 - j}) \subset B(p,2^{- k})$ and moreover, $\mathrm{dist}(p,\Gamma \setminus R)\geq 2^{-k}$ for $p \in  \partial_{\rho(k)} R$, therefore $ B(p,2^{- k}) \subset R$ for such $p$. We would now like to split the operator $S$  into two parts -- depending on $k$: a sum of terms where the estimate \eqref{form17} is valid (corresponding to indices $j$ with $2^{-j}\leq 2^{-k}$), and a sum with all the remaining terms. However, if $2^{-k} > 1$, we wish to further split the first sum into two parts, which we will discuss separately. Thus let us momentarily fix $k\in \Z$ with $2^{-k}\leq \ell(R)$ and consider the following $k$-dependent splitting:
\begin{align*} S = S_{I} + S_{II} + S_{III}= \sum_{j \in E_1(k)} T_{(j)} +
\sum_{j \in E_2(k)} T_{(j)} +
\sum_{j \in E_3(k)} T_{(j)}, \end{align*}
where
\begin{align*}
E_1(k)&:= \{j\in \Z:\,2^{-N} \leq 2^{-j} < \min\{1,2^{-k}\}\}\\
E_2(k)&:= \{j\in \Z:\,\min\{1,2^{-k}\} \leq 2^{-j} < 2^{-k}\}\\
E_3(k)&:= \{j\in \Z:\, 2^{-k} \leq 2^{-j} \leq 4\ell(R)\}.
\end{align*}
If $2^{-k}\leq 1$, then $E_{2}(k) = \emptyset$ and we simply have $S=S_I+ S_{III}$. Hence,
\begin{align}\label{play7} \sum_{2^{-k} \leq \ell(R)} I_{k} & \lesssim \sum_{2^{-k} \leq \ell(R)} \int_{\partial_{\rho(k)} R} |S_{I}\chi_{R}(p)|^{2} \, d\mu(p)\\
&\label{play5} + \sum_{1 < 2^{-k} \leq \ell(R)} \int_{\partial_{\rho(k)} R} |S_{II}\chi_{R}(p)|^{2} \, d\mu(p)\\
&\label{play6} + \sum_{2^{-k} \leq \ell(R)} \int_{\partial_{\rho(k)} R} |S_{III}\chi_{R}(p)|^{2} \, d\mu(p) \end{align}
On all three lines, we need to get $\lesssim \mu(R)$ on the right hand side; we start with line \eqref{play6}. The pointwise estimate we can obtain for $S_{III}\chi_{R}(p)$, for $p \in \partial_{\rho(k)} R$, is fairly lousy: it is based on the trivial estimate
\begin{equation}\label{form16} |T_{(j)}f(p)| \lesssim \|f\|_{L^{\infty}(\mu)}, \qquad j \in \Z, \: p \in \He, \:  f \in L^{\infty}(\mu). \end{equation}
This follows by observing that the support of
\begin{equation}\label{support} q \mapsto K_{(j)}(q^{-1} \cdot p) \end{equation}
is contained in the annulus $B(p,2^{1 - j}) \setminus B(p,2^{-2 - j})$, so $|K_{(j)}(q^{-1} \cdot p)| \lesssim 2^{3j}$, and finally
\begin{displaymath} |T_{(j)}f(p)| \lesssim \|f\|_{L^{\infty}(\mu)} \int_{B(p,2^{1 - j})} 2^{3j} \, d\mu \lesssim \|f\|_{L^{\infty}(\mu)}. \end{displaymath}
In particular, \eqref{form16} implies that
\begin{displaymath}  |S_{III}\chi_{R}(p)| \lesssim C + \log \frac{\ell(R)}{2^{-k}} \sim C + \log \frac{1}{\rho(k)}, \qquad p \in \partial_{\rho(k)} R. \end{displaymath}
Recalling again that $\mu(\partial_{\rho} R) \lesssim \rho^{\tfrac{1}{D}} \mu(R)$, we obtain
\begin{equation*}
\int_{\partial_{\rho(k)} R} |S_{III}\chi_{R}(p)|^{2} \, d\mu(p)\lesssim
\rho(k)^{\tfrac{1}{D}} \left(C + \log \frac{1}{\rho(k)} \right)^{2} \mu(R). \end{equation*}
The last terms are summable to $\lesssim \mu(R)$ over the range $2^{-k} \leq \ell(R)$ (which is equivalent to $\rho(k) \leq 2$), so we are done with estimating line \eqref{play6}.

In estimating $S_{I}$ and $S_{II}$, the following observation is crucial. Since \eqref{form17} holds for all $2^{-j} < 2^{-k}$, its analogue also holds for $S_{I}$ and $S_{II}$:
\begin{displaymath} S_{I}\chi_{R}(p) = \int_{\Gamma} K_{I}(q^{-1} \cdot p) \, d\mu(q) \quad \text{and} \quad S_{II}\chi_{R}(p) = \int_{\Gamma} K_{II}(q^{-1} \cdot p) \, d\mu(q) \end{displaymath}
for $p \in \partial_{\rho(k)} R$, where
\begin{displaymath} K_{I} := \sum_{j \in E_{1}(k)} K_{(j)} \quad \text{and} \quad K_{II} := \sum_{j \in E_{2}(k)} K_{(j)}. \end{displaymath}

We will now deal with line \eqref{play5}. Fix $k$ with $2^{-k} > 1$. We claim that if $p \in \partial_{\rho(k)} R \setminus B(0,C)$, where $C = C(\phi) \geq 1$ is a suitable constant, then $|S_{II}\chi_{R}(p)|$ is bounded by another constant, which only depends on the annular boundedness condition. This will give the estimate
\begin{equation}\label{play1} \int_{\partial_{\rho(k)} R} |S_{II}\chi_{R}(p)|^{2} \, d\mu(p) \lesssim \int_{B(0,C)} |S_{II}\chi_{R}(p)|^{2} \, d\mu(p) + \mu(\partial_{\rho(k)} R), \end{equation}
which will be good enough.

Now, assume that $\partial_{\rho(k)} R \setminus B(0,C) \neq \emptyset$, and fix $p = w_{0} \cdot \phi(w_{0}) \in \partial_{\rho(k)} R \setminus B(0,C)$. If $C$ was chosen large enough, the compact support of $\phi$ implies that
\begin{displaymath} p = w_{0} \in \W. \end{displaymath}
By \eqref{form17} and the area formula, Proposition \ref{p:area_formula},
\begin{displaymath} S_{II}\chi_{R}(p) = \int_{\Gamma} K_{II}(q^{-1} \cdot p) \, d\mu(q) = \int_{\W} K_{II}(\Phi_{\Gamma}(y,t)^{-1} \cdot w_{0})\sqrt{1 + \nabla^{\phi}\phi(y,t)^{2}} \, dy \, dt. \end{displaymath}
Write $\Phi_{\W}(w) = w$ for the graph map parametrizing $\W$ itself. Then, by the annular boundedness assumption, and the area formula again, it follows that
\begin{equation}
\begin{split}
\label{telesc} \left|\int_{\W} K_{II}(\Phi_{\W}(y,t)^{-1} \cdot w_{0}) \, dy \, dt \right| & = \left|\int_{\W} K_{II}(w^{-1} \cdot w_{0}) \, d\mathcal{S}^3(w)\right|\\
& = \left|\int_{\W} K_{II}(w) \, d\mathcal{S}^3(w)\right| \leq A. \end{split}\end{equation}
To justify the last inequality, we write
\begin{equation*}
\begin{split}
 \left|\int_{\W} K_{II}(w) \, d\mathcal{S}^3(w)\right|=&\left| \int_{\W} \sum_{j \in E_2(k)} K_{(j)}(w)\, d \mathcal{S}^3(w) \right| \\
 &=\left| \int_{\W} \sum_{k<j \leq 0} \eta_j(w) K(w)\, d \mathcal{S}^3(w) \right| \\
 & =\left| \int_{\W}(\psi(2^{-(k+1)} \|w \|)-\psi(\|w\|) )K(w) \, d \mathcal{S}^3(w)\right|.
\end{split}
\end{equation*}
Observe that the function $\psi':\He \ra \R$, defined by $\psi'(w)=\psi (\|w\|)$, satisfies the conditions from Definition \ref{abc} and  $\psi'^r(w)=\psi'(\delta_{r^{-1}}(w))=\psi(\|w\|/r)$ for $w \in \W$ and $r>0$. The careful reader may have noticed that Definition \ref{abc} has been formulated in terms of an integral with respect to the $2$-dimensional Lebesgue measure $\mathcal{L}^2$, rather than $\mathcal{S}^3$. However, restricted to $\W$, Heisenberg group multiplication behaves like addition in $\mathbb{R}^2$, see \eqref{eq:group_law}, and so $\mathcal{L}^2$ yields a uniformly distributed measure on $(\W,d)$. Since also
$\mathcal{S}^3$ restricted to $\W$ is a uniformly distributed measure, the two measures $\mathcal{L}^2$ and $\mathcal{S}^3$ agree up to a multiplicative constant,
see Theorem 3.4 in \cite{zbMATH01249699}. Moreover, this constant does not depend on the choice of $\W$ since rotations around the vertical axis are isometries both for the Euclidean and the Kor\'{a}nyi distance.
In conclusion,  \eqref{telesc} follows by \eqref{vabann}.

Consequently, $S_{II}\chi_{R}(p)$ only differs in absolute value by $\leq A$ from the following expression:
\begin{equation}\label{play2} \int_{\W} \left[K_{II}(\Phi_{\Gamma}(y,t)^{-1} \cdot w_{0})\sqrt{1 + \nabla^{\phi}\phi(y,t)^{2}} - K_{II}(\Phi_{\W}(y,t)^{-1} \cdot w_{0}) \right] \, dy \, dt. \end{equation}
We immediately note that the integrand vanishes identically for $(y,t) \in \W \setminus \spt \phi$, since $\Phi_{\Gamma}(y,t) = \Phi_{\W}(y,t)$ and $\nabla^{\phi}\phi(y,t) = 0$ for such $(y,t)$. What if $(y,t) \in \spt \phi$? Note that if the integrand does not vanish, then, by the definition of $K_{II}$, we have
\begin{displaymath} \Phi_{\Gamma}(y,t)^{-1} \cdot w_{0} \in \bigcup_{1 < 2^{-j} \leq 2^{-k}} \spt K_{(j)} \quad \text{or} \quad \Phi_{\W}(y,t)^{-1} \cdot w_{0} \in \bigcup_{1 < 2^{-j} \leq 2^{-k}} \spt K_{(j)}. \end{displaymath}
Recall from \eqref{supportKj} that $\spt K_{(j)} \subset B(\mathbf{0},2^{1 - j}) \setminus B(\mathbf{0},2^{-2 - j})$. Hence, if $\Phi_{\Gamma}(y,t)^{-1} \cdot w_{0} \in \spt K_{(j)}$ for instance, we have
\begin{equation}\label{form36} p = w_{0} \in B(\Phi_{\Gamma}(y,t),2^{1 - j}) \setminus B(\Phi_{\Gamma}(y,t),2^{-2 - j}) \subset B(\Phi_{\Gamma}(y,t),2^{1 - j}). \end{equation}
Given that $\Phi_{\Gamma}(y,t)$ always lies in the fixed ball $B(0,\diam(\Phi_{\Gamma}(\spt \phi)))$, independent of the particular choice of $(y,t) \in \spt \phi$, and $p \in \He \setminus B(0,C)$, we infer that \eqref{form36} can only occur for $\lesssim 1$ indices $j$ with $2^{-j} > 1$ (the particular indices naturally depend on the location of our fixed point $p = w_{0}$). Finally, noticing that $\|K_{II}\|_{L^{\infty}} \lesssim 1$ (see Remark \ref{r:K_j_Hol} and note that $E_{2}(k)$ only contains negative indices $j$), we see that the expression in \eqref{play2} is bounded by $\lesssim \calL^{2}(\spt \phi) \lesssim 1$. This proves that $|S_{II}\chi_{R}(p)| \lesssim 1$, and establishes \eqref{play1}.

Finally, we observe that for $p \in B(0,C)$, we have the trivial estimate $|S_{II}\chi_{R}(p)| \lesssim 1 + \log \ell(R)$, using \eqref{form16} (note that since $2^{-k} > 1$, also $\ell(R) \geq 2^{-k} > 1$). Combined with \eqref{play1}, and $\mu(B(0,C)) \lesssim 1$, we get
\begin{displaymath} \int_{\partial_{\rho(k)} R} |S_{II}\chi_{R}(p)|^{2} \, d\mu(p) \lesssim (1 + \log \ell(R))^{2} + \mu(\partial_{\rho(k)} R). \end{displaymath}
Hence
\begin{displaymath} \eqref{play5} \lesssim \sum_{1 \leq 2^{-k} \leq \ell(R)} [(1 + \log \ell(R))^{2} + \mu(\partial_{\rho(k)} R)] \lesssim \ell(R)^{3} \sim \mu(R), \end{displaymath}
as desired.

It remains to estimate the ``main term'' on line \eqref{play7}. The plan is simply to give a point-wise estimate for the integrand $|S_{I}\chi_{R}(p)|^{2}$, for $p \in \partial_{\rho(k)} R$. Fix $p \in \partial_{\rho(k)} R$, and recall, by \eqref{form17} and the definition of $S_{I}$, that
\begin{equation}\label{eq:S_I_small_translate} S_{I}\chi_{R}(p) = \int_{\Gamma} K_{I}(q^{-1} \cdot p) \, d\calS^{3}(q) = \int_{(p^{-1}) \cdot \Gamma} K_{I}(q^{-1}) \, d\calS^{3}(q). \end{equation}
Since $(p^{-1}) \cdot \Gamma$ is a graph with the same properties as $\Gamma$, we may assume that $p = \mathbf{0}$. More precisely, in this last part of the proof, we will only rely on the $C^{1,\alpha}$-hypothesis (and not the compact support of $\phi$), which is left-translation invariant by Lemma \ref{l:translation}.

So, we assume without loss of generality that $p = \mathbf{0}$ (and thus $\mathbf{0}\in \Gamma$ and $\phi(\mathbf{0})=\mathbf{0}$).
Hence, applying the area formula, Proposition \ref{p:area_formula}, the task is to estimate
\begin{equation}\label{form21} \left| \int_{\Gamma} K_{I}(q^{-1}) \, d\mu(q) \right| = \left| \int_{\W} K_{I}(\Phi_{\Gamma}(y,t)^{-1}) \sqrt{1 + \nabla^{\phi}\phi(y,t)^{2}} \, dy \, dt \right|. \end{equation}
The plan is to approximate $\Gamma$ around $p = \mathbf{0}$ by a vertical plane $\W_{0}$, namely the vertical tangent plane of $\Gamma$ at $\mathbf{0}$, and use the annular boundedness property. So, let $\W_{0}$ be the intrinsic graph of the function $\phi_{0}(y,t) = \nabla^{\phi}\phi(0,0)y$, note that $\nabla^{\phi_{0}}\phi_{0} \equiv \nabla^{\phi}\phi(0,0)$, and let $\Phi_{\W_{0}}(y,t) = (y,t) \cdot \phi_{0}(y,t)$ be the graph map of $\phi_{0}$.
By the annular boundedness as in \eqref{telesc}
and the area formula again, we obtain
\begin{displaymath} \left|\int_{\W} K_{I}(\Phi_{\W_{0}}(y,t)^{-1})\sqrt{1 + \nabla^{\phi}\phi(0,0)^{2}} \, dy \, dt\right| = \left|\int_{\W_{0}} K_{I}(q^{-1}) \, d\mathcal{S}^3(q) \right| \leq A \end{displaymath}
Then, we estimate as follows:
\begin{align} \left| \int_{\Gamma} K_{I}(q^{-1}) \, d\mu(q) \right| & \leq \left| \int_{\Gamma} K_{I}(q^{-1})\, d \mu(q) - \int_{\W_0} K_{I}(q^{-1}) \, d\mathcal{S}^3(q) \right| + A\notag\\
&\leq \Big| \int_{\W} K_{I}(\Phi_{\Gamma}(y,t)^{-1})\sqrt{1 + \nabla^{\phi}\phi(y,t)^{2}} \, dy \, dt \notag\\
& - \int_{\W} K_{I}(\Phi_{\W_{0}}(y,t)^{-1}) \sqrt{1 + \nabla^{\phi}\phi(0,0)^{2}} \, dy \, dt \Big| + A\notag\\
\leq &\label{form33} \int_{\W}\left|K_{I}(\Phi_{\Gamma}(y,t)^{-1})-K_{I}(\Phi_{\W_{0}}(y,t)^{-1}) \right| \sqrt{1 + \nabla^{\phi}\phi(y,t)^{2}} \, dy \, dt \\
 &\label{form38} +  \Big| \int_{\W} K_{I}(\Phi_{\W_{0}}(y,t)^{-1})\left(\sqrt{1 + \nabla^{\phi}\phi(y,t)^{2}}  -\sqrt{1 + \nabla^{\phi}\phi(0,0)^{2}}\right) \, dy \, dt \Big|\\
 &+ A.\notag
\end{align}

Next, we recall that $K_{I}$ is a finite sum of the kernels $K_{(j)}$. By the triangle inequality,
\begin{align*} & \left|K_{I}(\Phi_{\Gamma}(y,t)^{-1})-K_{I}(\Phi_{\W_{0}}(y,t)^{-1}) \right|
 \leq \sum_{j\in E_1(k)} \left|K_{(j)}(\Phi_{\Gamma}(y,t)^{-1})-K_{(j)}(\Phi_{\W_{0}}(y,t)^{-1}) \right|\end{align*}
and
\begin{displaymath} |K_{I}(\Phi_{\W_{0}}(y,t)^{-1})| \leq \sum_{j\in E_1(k)} |K_{(j)}(\Phi_{\W_{0}}(y,t)^{-1})|. \end{displaymath}
We now split the terms in \eqref{form33} and \eqref{form38} to sums over the indices $j\in E_1(k)$, and estimate the terms
\begin{equation}\label{play3} \int_{\W}\left|K_{(j)}(\Phi_{\Gamma}(y,t)^{-1})-K_{(j)}(\Phi_{\W_{0}}(y,t)^{-1}) \right| \sqrt{1 + \nabla^{\phi}\phi(y,t)^{2}} \, dy \, dt \end{equation}
 and
\begin{equation}\label{play4} \int_{\W} |K_{(j)}(\Phi_{\W_{0}}(y,t)^{-1})| \left| \sqrt{1 + \nabla^{\phi}\phi(y,t)^{2}}  -\sqrt{1 + \nabla^{\phi}\phi(0,0)^{2}}\right| \, dy \, dt\end{equation}
 for fixed $j\in E_1(k)$. We recall from \eqref{supportKj} that $\spt K_{(j)} \subset B(\mathbf{0},2^{1 - j}) \setminus B(\mathbf{0},2^{-2 - j})$, so we can restrict in both \eqref{play3} and \eqref{play4} the domain of integration to
\begin{displaymath} \pi_{\mathbb{W}}(B(\mathbf{0},2^{1-j}) \setminus B(\mathbf{0},2^{-2 - j})). \end{displaymath}
To make use of this, we record the formula for the vertical projection $\pi_{\W}$:
\begin{displaymath} \pi_{\W}(x,y,t) = (0,y,t + \tfrac{1}{2}xy). \end{displaymath}
In particular, if $q = (x,y,t) \in B(\mathbf{0},2^{1 - j})$, then
\begin{equation}\label{form34} \|\pi_{\W}(q)\| \lesssim |y| + |t + \tfrac{1}{2}xy|^{1/2} \lesssim 2^{-j}. \end{equation}
In order to bound the term
in \eqref{play3}, we use the H\"older continuity of $K_{(j)}$, which is provided by Remark \ref{r:K_j_Hol} from the corresponding estimate for $K$. Instead of applying directly the growth condition from \eqref{form0}, it is slightly more convenient to resort to the estimate we obtain from Lemma \ref{l:standardHolder}.
Fix $(y,t) \in \pi_{\W}(B(\mathbf{0},2^{1 - j}) \setminus B(\mathbf{0},2^{-2 - j}))$, so that $\|(y,t)\| \lesssim 2^{-j}$ by \eqref{form34}.
Applying the estimate in Lemma \ref{l:standardHolder} with $w= \mathbf{0}$, $v_1= \Phi_{\Gamma}(y,t)$ and $v_2= \Phi_{\W_{0}}(y,t)$, we find  that
\begin{equation}\label{form45}
\left| K_{(j)}(\Phi_{\Gamma}(y,t)^{-1})-K_{(j)}(\Phi_{\W_{0}}(y,t)^{-1}) \right|
\lesssim \frac{\|\Phi_{\W_{0}}(y,t)^{-1}\cdot \Phi_{\Gamma}(y,t)\|^{\beta/2}}{\|\Phi_{\Gamma}(y,t)\|^{3+\beta/2}}
\end{equation}
This estimate is legitimate, if $d(v_{1},v_{2}) \leq \|v_{1}\|/2$. This may not be the case to begin with, but then we know that $2^{-j} \lesssim \|v_{1}\|/2 \leq d(v_{1},v_{2}) \lesssim 2^{-j}$, and we can pick boundedly many points $v_{1}',\ldots,v_{n}'$ with the following properties: $v_{1}' = v_{1}$, $v_{n}' = v_{2}$, $\|v_{i}'\| \sim 2^{-j}$ and $2^{-j} \sim d(v_{i}',v_{i + 1}') \leq \|v_{i}'\|/2$; in particular $d(v_{i}',v_{i + 1}') \lesssim d(v_{1},v_{2})$. Then, we  obtain \eqref{form45} by the triangle inequality, and boundedly many applications of the H\"older estimates.

We then continue \eqref{form45} by using the definition of graph
maps, and Proposition \ref{p:affineApproximation} in the case
$\|(y,t)\|\lesssim 2^{-j}\leq 1$ (since we are currently dealing
with $j\in E_1(k)$):
\begin{displaymath} \|\Phi_{\W_{0}}(y,t)^{-1}\cdot \Phi_{\Gamma}(y,t)\| = |\nabla^{\phi}\phi(0,0)y-\phi(y,t)| \lesssim_{H,L} \|(y,t)\|^{1 +
\tfrac{\alpha}{2}} \lesssim 2^{-j(1 + \tfrac{\alpha}{2})}.
\end{displaymath} Hence, by \eqref{form45}, and the estimate
above,
\begin{displaymath} \left| K_{(j)}(\Phi_{\Gamma}(y,t)^{-1})-K_{(j)}(\Phi_{\W_{0}}(y,t)^{-1})
\right| \lesssim_{H,L} 2^{3j - \alpha \beta j/4},
\end{displaymath} for $(y,t) \in \pi_{\W}(B(\mathbf{0},2^{1 - j})
\setminus B(\mathbf{0},2^{-2 - j}))$. Now, we can finish the bound
for \eqref{play3} by observing that
\begin{equation}\label{form39} \calL^{2}(\pi_{\W}(B(\mathbf{0},2^{1 - j}))) \lesssim 2^{-3j} \end{equation}
by \eqref{form34}, and hence
\begin{displaymath} \eqref{play3} \lesssim_{H,L} (1 + \|\nabla^{\phi}\phi\|_{L^{\infty}(\W)})
\calL^{2}(\pi_{\W}(B(\mathbf{0},2^{1 - j}))) \cdot 2^{3j - \alpha
\beta j/4} \lesssim_{H,L} 2^{-\alpha \beta j/4}. \end{displaymath}
Next, we desire a similar estimate for \eqref{play4}, but this is
easier, using the fact, see Remark \ref{r:K_j_Hol}, that
$\|K_{(j)}\|_{L^{\infty}(\He)} \lesssim 2^{3j}$. Hence, by the
definition of $\phi \in C^{1,\alpha}(\W)$, and the the estimates
\eqref{form34}, \eqref{form39},
\begin{align*} \eqref{play4} \lesssim & 2^{3j}\int_{\pi_{\W}(B(\mathbf{0},2^{1 - j}))}
 |\nabla^{\phi}\phi(y,t) - \nabla^{\phi}\phi(0,0)| \, dy \, dt\\
& \lesssim_{H} 2^{3j} \int_{\pi_{\W}(B(\mathbf{0},2^{1 - j}))} \|(y,t)\|^{\alpha} \, dy \, dt \lesssim 2^{-\alpha j}. \end{align*}

Next we insert the bounds for \eqref{play3}  and  \eqref{play4} back into  \eqref{form33} and \eqref{form38} (recalling also the reduction made at \eqref{eq:S_I_small_translate}):
\begin{displaymath}
\left|S_I \chi_R(p)\right| \lesssim A + \sum_{j\in E_1(k)}
\left(2^{-\alpha \beta j/4}+2^{-\alpha j}\right) \lesssim 1, \quad
p \in \partial_{\rho(k)} R.
\end{displaymath}
Here the sum is uniformly bounded since by definition $E_1(k)$ is the set of indices $j$ with $2^{-N} \leq 2^{-j} \leq \min\{1,2^{-k}\}$.
Finally,
\begin{align*}
\int_{\partial \rho_{(k)} R} \left|S_I \chi_R(p)\right|^2 \, d\mu(p) & \lesssim \mu\left(\partial_{\rho(k)} R\right)
 \lesssim \left(\rho(k)\right)^{\frac{1}{D}} \mu(R),
\end{align*}
which shows that $\eqref{play7} \lesssim \mu(R)$. This completes the proof of Proposition \ref{p:T1}. \end{proof}

By the $T1$ theorem, see \eqref{T1}, we have now established that $T$ is bounded on $L^{2}(\mu)$, as stated in Theorem \ref{mainIntro}.

\section{H\"older regularity and linear approximation}\label{s:regularity}

In this section, we prove Proposition \ref{p:affineApproximation}, which we restate below for convenience:

\begin{proposition}\label{p:affineApproximation2} Fix $\alpha \in [0,1]$, and assume that $\W$ is the $(y,t)$-plane. Assume that $\phi \in C^{1,\alpha}(\W)$ with $L := \|\nabla^{\phi}\phi\|_{L^{\infty}(\W)} < \infty$. Then, for $p = w \cdot \phi(w) \in \Gamma(\phi)$,
\begin{displaymath} |\phi^{(p^{-1})}(y,t) - \nabla^{\phi}\phi(w)y| \lesssim\max \{\|(y,t)\|^{1+\alpha}, \|(y,t)\|^{1 + \tfrac{\alpha}{2}}\}, \qquad (y,t) \in \W, \end{displaymath}
where the implicit constant depends on $L$, and, if $\alpha > 0$, also on the H\"older continuity constant ``$H$'' in the definition of $C^{1,\alpha}(\W)$.
\end{proposition}

The proof depends on a non-trivial H\"older estimate which functions $\phi$ as in Proposition \ref{p:affineApproximation2} satisfy along vertical lines in $\W$, see Proposition \ref{p:improvedRegularity}. Momentarily taking this estimate for granted, we explain now how to deduce Proposition \ref{p:affineApproximation2}.

\begin{proof} By definition of $\nabla^{\phi}\phi$, we have
\begin{displaymath} |\phi^{(p^{-1})}(y,t) - \nabla^{\phi}\phi(w)y| = |\phi^{(p^{-1})}(y,t) - \nabla^{\phi^{(p^{-1})}}\phi^{(p^{-1})}(0,0)y|. \end{displaymath}
So, we simply want to prove that $|\phi(y,t) - \nabla^{\phi}\phi(0,0)y| \lesssim \|(y,t)\|^{1 + \alpha}$ under the assumption that $p = \mathbf{0}$ and $\phi(0,0) = 0$ (the constants $L$ and $H$ are not changed under left translations).

Consider the following ODE:
\begin{equation}\label{ODEa} \begin{cases} \tau'(s) = \phi(s,\tau(s)), \\ \tau(0) = 0. \end{cases} \end{equation}
Since $\phi$ is intrinsically differentiable, $\phi$ is continuous, see Proposition 4.74 in \cite{MR3587666}. So, by Peano's theorem, the equation \eqref{ODEa} has a (possibly non-unique) solution, which exists on some maximal interval $J$ containing $0$. Moreover, $(s,\tau(s))$ leaves any compact set of the plane $\W$, as $s$ tends to either endpoint of $J$; see for instance
\cite[Corollary 2.16]{MR2961944} or \cite[Theorem 2.1]{MR587488}.
We presently want to argue that $J = \R$. Define $\gamma(s) := (s,\tau(s))$ for $s \in J$.

By Lemma \ref{l:integral_along_curves} below, we have an integral representation of $\phi$ along $\gamma$:
\begin{equation}\label{form26} \phi(s,\tau(s)) = \int_{0}^s \nabla^{\phi}\phi(r,\tau(r))\;dr, \qquad s \in J. \end{equation}
Hence, for $y \in J$,
\begin{equation}\label{form27} \tau(y) = \int_{0}^{y} \tau'(s) \, ds \stackrel{\eqref{ODEa}}{=} \int_{0}^{y} \phi(s,\tau(s)) \, ds \stackrel{\eqref{form26}}{=} \int_{0}^{y} \int_{0}^{s} \nabla^{\phi}\phi(r,\tau(r)) \, dr \, ds.
\end{equation}
Since $|\nabla^{\phi}\phi| \leq L$ by hypothesis, we see that $y \mapsto \tau(y)$ cannot blow up in finite time; hence $J = \R$, and \eqref{form27} even gives us the quantitative bound
\begin{equation}\label{form28} |\tau(y)| \lesssim_{L} y^{2}, \qquad y \in J = \R. \end{equation}
Now, we estimate the difference $|\phi(y,t) - \nabla^{\phi}\phi(0,0)y|$ as follows:
\begin{equation}\label{form29} |\phi(y,t) - \nabla^{\phi}\phi(0,0)y| \leq |\phi(y,t) - \phi(y,\tau(y))| + |\phi(y,\tau(y)) - \nabla^{\phi}\phi(0,0)y|. \end{equation}
We start with the second term in \eqref{form29}. Using again the integration formula \eqref{form26}, then Definition \ref{d:C1,alpha,W}, and finally \eqref{form28},
\begin{align*} |\phi(y,\tau(y)) - \nabla^{\phi}\phi(0,0)y| & = \int_{0}^{y} |\nabla^{\phi}\phi(s,\tau(s)) - \nabla^{\phi}\phi(0,0)| \, ds\\
& \lesssim_{H} \int_{0}^{y} \|(s,\tau(s))\|^{\alpha} \, ds\\
& \lesssim_{L} \int_{0}^{y} |s|^{\alpha} \, ds \lesssim |y|^{\alpha + 1} \lesssim \|(y,t)\|^{1 + \alpha}, \end{align*}
if $y>0$; the case $y<0$ works analogously with ``$\int_y^0$'' instead of ``$\int_0^y$''.
The estimate above remains valid when $\alpha = 0$: then the $H$-dependent bound is simply replaced by the estimate $|\nabla^{\phi}\phi(s,\tau(s)) - \nabla^{\phi}\phi(0,0)| \leq 2L$.

To prove the corresponding bound for the first term in \eqref{form29}, we use Proposition \ref{p:improvedRegularity} below:
\begin{displaymath} |\phi(y,t) - \phi(y,\tau(y))| \lesssim_{(\ast)} |t - \tau(y)|^{\tfrac{1}{2}+\tfrac{\alpha}{4}} \lesssim |t|^{\tfrac{1}{2}+\tfrac{\alpha}{4}} + |\tau(y)|^{\tfrac{1}{2}+\tfrac{\alpha}{4}} \lesssim_{L} \|(y,t)\|^{1 + \tfrac{\alpha}{2}},\end{displaymath}
using \eqref{form28} in the final estimate. The implicit constant
in $\lesssim_{(\ast)}$ depends on $L$ if $\alpha = 0$ and $H$ if
$\alpha > 0$. The proof of the proposition is complete.
\end{proof}

The proof of the next proposition is similar to that of Lemma 3.3 in \cite{BSC}, but the setting is slightly different, so we give all the details.

\begin{proposition}\label{p:improvedRegularity}
Fix $\alpha \in [0,1]$. If $\alpha = 0$, assume that $\phi \in C^{1}(\W)$
and $\|\nabla^{\phi}\phi\|_{L^{\infty}(\W)} =: H < \infty$. If $\alpha > 0$, assume instead that $\phi \in C^{1,\alpha}(\W)$ with constant $H$, and additionally $\|\nabla^{\phi}\phi\|_{L^{\infty}(\mathbb{W})} < \infty$. Then the restriction of $\phi$ to any vertical line on the plane $\W$ satisfies
\begin{displaymath} |\phi(y_{0},t_{1}) - \phi(y_{0},t_{2})| \leq 4H^{1/2} |t_{1} - t_{2}|^{\tfrac{1}{2}+\tfrac{\alpha}{4}},
 \qquad y_{0} \in \R, \: t_{1},t_{2} \in \R. \end{displaymath}
\end{proposition}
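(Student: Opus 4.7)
The plan is to reduce the statement by left-translation to a normalized setting, then run a contradiction argument built on the characteristic ODE $\tau'(s) = \phi(s,\tau(s))$ and the integral formula for $\phi$ along its characteristics (Lemma \ref{l:integral_along_curves}, already invoked in the proof of Proposition \ref{p:affineApproximation2}). First I would use the invariance of $C^{1,\alpha}(\W)$ under arbitrary left-translations, with the constant $H$ preserved (Lemma \ref{l:translation}), and compose translations—one along $\W$ shifting $y_0\to 0$, one along $\W$ shifting $t_1\to 0$, and one along $\V$ normalizing $\phi(0,0)=0$—to reduce the claim to $|\phi(0,h)| \leq (CH)^{1/2} h^{(1+\alpha)/2}$ for $h := t_2-t_1>0$ (the case $h<0$ being symmetric). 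Set $M := \phi(0,h)$ and assume WLOG $M>0$; the task becomes $M^{2} \leq C H h^{1+\alpha}$.

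Next I would introduce two characteristic curves $\tau_1,\tau_2\colon \R\to \R$ solving $\tau_i'(s) = \phi(s,\tau_i(s))$ with $\tau_1(0)=0$ and $\tau_2(0)=h$ (existence by Peano's theorem), and set $\gamma_i(s) := (s,\tau_i(s))$, $g(s) := \tau_2(s)-\tau_1(s)$, $u(s) := \phi(\gamma_2(s))-\phi(\gamma_1(s))$. Then $g(0)=h$, $u(0)=M$, $g'=u$, and the integral representation gives
\[
u'(s) = \nabla^{\phi}\phi(\gamma_2(s)) - \nabla^{\phi}\phi(\gamma_1(s)).
\]
Since $\gamma_1(s),\gamma_2(s)$ lie on the same vertical line at Kor\'anyi separation $\|(0,g(s))\| = 2\,g(s)^{1/2}$, applying the $C^{1,\alpha}(\W)$ hypothesis at $p_0 := \gamma_1(s)\cdot \phi(\gamma_1(s)) \in \Gamma(\phi)$ with $w=(0,g(s))$ and the chain-rule identity $\nabla^{\phi^{(p_0^{-1})}}\phi^{(p_0^{-1})}(w) = \nabla^{\phi}\phi(\gamma_2(s))$ yields $|u'(s)|\leq 2H$ if $\alpha=0$ (by the triangle inequality and $\|\nabla^{\phi}\phi\|_{L^\infty}\leq H$), and $|u'(s)| \leq 2^{\alpha} H\, g(s)^{\alpha/2}$ if $\alpha>0$.

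For the case $\alpha=0$, $|u'|\leq 2H$ and $u(0)=M$ force $u\geq M/2$ on $|s|\leq M/(4H)$; hence for $s<0$ in that range, $g(s) \leq h-(M/2)|s|$, which vanishes at some $s_{\ast}\in[-2h/M,0)$ provided $M^{2}\geq 8Hh$. At the crossing $\gamma_1(s_\ast)=\gamma_2(s_\ast)$, so $u(s_\ast)=0$ and
\[
M \;=\; u(0) - u(s_\ast) \;=\; -\int_{s_\ast}^{0} u'(r)\,dr \;\leq\; 2H|s_\ast| \;\leq\; 4Hh/M,
\]
yielding $M^{2}\leq 4Hh$ and contradicting $M^{2}\geq 8Hh$. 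This closes the $\alpha=0$ case with $C=8$.

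The main obstacle is the case $\alpha>0$. The same crossing scheme applies in principle, but the analysis on $[s_\ast,0]$ is now governed by the second-order ODE inequality $|g''|\leq 2^{\alpha}H\,g^{\alpha/2}$ with degenerate initial data $g(s_\ast)=g'(s_\ast)=0$ and terminal value $g(0)=h$. Replacing $g\leq h$ crudely in the integral only delivers the weak estimate $M^{2}\lesssim H h^{1+\alpha/2}$, and even comparing $g$ with the self-similar solution $g\sim C(s-s_\ast)^{4/(2-\alpha)}$ and using $|s_\ast|\leq 2h/M$ leads in a naive way to the exponent $1/2+\alpha/4$. Extracting the sharp exponent $(1+\alpha)/2$ requires a finer bookkeeping: splitting $[s_\ast,0]$ into a near-crossing region (where $g$ grows from $0$ at a rate controlled by the ODE) and a near-origin region (where $g$ is comparable to $h$), and carefully balancing the two contributions against the lower bound $|s_\ast|\geq h/(2M)$ that follows from the upper bound on $u$. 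This is the technical heart of the argument, and I would model it closely on Lemma 3.3 of \cite{BSC}.
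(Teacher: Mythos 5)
Your treatment of the case $\alpha=0$ is correct and essentially coincides with the paper's: introduce the two characteristics through $(y_0,t_1)$ and $(y_0,t_2)$, show by a short bootstrap that $u=\phi(\gamma_2)-\phi(\gamma_1)$ stays $\geq M/2$ on an interval long enough to contain the first crossing $s_*$ (where $g(s_*)=0$ forces $u(s_*)=0$), and conclude by contradiction.

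For $\alpha>0$ the proposal has a genuine gap which you correctly diagnose but do not fill. Your pointwise estimate $|u'(s)|\leq 2^{\alpha}H\,g(s)^{\alpha/2}$ is the natural consequence of \eqref{form25} applied to a purely vertical increment, since $\|(0,g(s))\|=2\,g(s)^{1/2}$; and, as you observe, feeding this into the crossing scheme -- whether via the crude bound $g\leq h$, the self-similar comparison, or the energy identity $\tfrac12 (g')^2\big|_{s_*}^{0}=\int_{s_*}^{0} g'g''$ -- stalls at $M^{2}\lesssim H h^{1+\alpha/2}$, i.e.\ the exponent $(2+\alpha)/4$, which is strictly smaller than $(1+\alpha)/2$ for every $\alpha>0$. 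The paper's proof organizes the same crossing argument around the maximal interval $\tilde J$ on which $u$ stays within a factor of $2$ of its initial value $A(t_2-t_1)^{(1+\alpha)/2}$, and feeds the linear bound $|g(r)|\leq\tfrac{3A}{2}(t_2-t_1)^{(1+\alpha)/2}|r-y_0|+(t_2-t_1)$ into the H\"older estimate; but at \eqref{form9} it bounds $|u'|$ by $2H|g|^{\alpha}$, not by a multiple of $|g|^{\alpha/2}$, and that stronger exponent is exactly what makes the bootstrap close at $(1+\alpha)/2$. The near-crossing/near-origin split you sketch does not by itself recover the missing half-power of $\alpha$ -- plugging your $\alpha/2$-estimate into the paper's own $\tilde J$ bootstrap still bottoms out at $(2+\alpha)/4$. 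So the first order of business, before you can finish, is to reconcile your (to my reading correct) $\alpha/2$ derivation with the $\alpha$-power appearing in \eqref{form9}: either there is a finer use of the translated-gradient identity $\nabla^{\phi^{(p_0^{-1})}}\phi^{(p_0^{-1})}(w)=\nabla^\phi\phi(P_{p_0}(w))$ that supplies the extra factor, or the argument only proves the smaller exponent $(2+\alpha)/4$, which, it is worth noting, would still suffice for the way Proposition \ref{p:improvedRegularity} feeds into Proposition \ref{p:affineApproximation} and ultimately into the $T1$ estimates in Section \ref{s:mainProof}.
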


\begin{remark} Recall that the case $\alpha = 0$ of the proposition above is needed to justify Remark \ref{r:ab}(b), which states that the conditions $\phi \in C^{1}(\W)$ and $\nabla^{\phi}\phi \in L^{\infty}(\W)$ imply that $\phi$ is intrinsic Lipschitz. If the conclusion was known \emph{a priori}, then the proof of the proposition (case $\alpha = 0$) could be significantly simplified. Indeed, assuming that $\phi \colon \W \to \V$ is intrinsic $L$-Lipschitz, then
\begin{displaymath} |\phi(y_{0},t_{1}) - \phi(y_{0},t_{2})| \leq L\|\pi_{\W}(\Phi(y_{0},t_{2})^{-1} \cdot \Phi(y_{0},t_{1}))\| = L|t_{1} - t_{2}|^{\tfrac{1}{2}} \end{displaymath}
for $y_{0},t_{1},t_{2} \in \R$.

In the case $\alpha > 0$, we do not know if the qualitative hypothesis $\|\nabla^{\phi}\phi\|_{L^{\infty}(\mathbb{W})} < \infty$ is necessary, but it is for free in our application, and it simplifies the proof noticeably.
\end{remark}

\begin{proof}[Proof of Proposition \ref{p:improvedRegularity}] We make a counter-assumption:
there exists $y_{0} \in \R$ and $t_{1} < t_{2}$ such that
\begin{equation}\label{form7} \phi(y_{0},t_{1}) - \phi(y_{0},t_{2}) = A(t_{2} - t_{1})^{\tfrac{1}{2} + \tfrac{\alpha}{4}} \end{equation}
for some constant $A >  4H^{1/2}$. Other cases (e.g. with $t_{2} <
t_{1}$) can be treated similarly.
 Also, it suffices to prove the proposition in the case where $H\neq 0$.
 Now, the plan is the following: we will define two curves $\gamma_{1},\gamma_{2} \colon \R \to \W$,
 starting from $(y_{0},t_{1})$ and $(y_{0},t_{2})$, respectively. Relying on the counter assumption \eqref{form7}, we will show that
\begin{itemize}
\item[(a)] There exists $s^{\ast} \in \R$ such that $\gamma_{1}(s^{\ast}) = \gamma_{2}(s^{\ast})$, but
\item[(b)] $\phi(\gamma_{1}(s^{\ast})) \neq \phi(\gamma_{2}(s^{\ast}))$.
\end{itemize}
This contradiction will complete the proof.

We now define the curves $\gamma_{j}$. In fact, we define a curve $\gamma_{(y_{0},t)}$ starting from any point $(y_{0},t)$, $t \in \R$, and then set $\gamma_{j} := \gamma_{(y_{0},t_{j})}$. Fix $t \in \R$, and consider the ODE already familiar from the proof of Proposition \ref{p:affineApproximation} (Proposition \ref{p:affineApproximation2}):
\begin{equation}\label{ODE} \begin{cases} \partial_{s} \tau(s,t) = \phi(s,\tau(s,t)), \\ \tau(y_{0},t) = t. \end{cases} \end{equation}
Our qualitative hypothesis $\|\nabla^{\phi}\phi\|_{L^{\infty}(\mathbb{W})} < \infty$ ensures that the solutions $s \mapsto \tau(s,t)$ exist for all $s \in \R$, recall the argument above \eqref{form28}. Define $\gamma_{(y_{0},t)}(s) := (s,\tau(s,t))$ for $s \in \R$. Note that $\gamma_{(y_{0},t)}$ is a $\calC^{1}$ integral curve of the vector field
\begin{displaymath} \nabla^{\phi} := \partial_{y} + \phi \cdot \partial_{t}, \end{displaymath}
since
\begin{displaymath} \partial_{s} \gamma_{(y_{0},t)}(s) = (1,\partial_{s}\tau(s,t)) \stackrel{\eqref{ODE}}{=} (1,\phi(s,\tau(s,t))) = (1,\phi(\gamma_{(y_{0},t)}(s))) = \nabla^{\phi}_{\gamma_{(y_{0},t)}(s)}. \end{displaymath}
Let
\begin{equation}\label{eq:TildeJ}
J_{0} := [y_0,y_0+B(t_2-t_1)^{\tfrac{1}{2}-\tfrac{\alpha}{4}}]\quad\text{with
}B=\tfrac{A}{8H},
\end{equation}
and
\begin{align*} s^{\ast} := \sup\{s \in J_{0} & : \tau(\sigma,t_{1}) - \tau(\sigma,t_{2}) < 0 \text{ for } \sigma \in [y_{0},s] \\
& \text{ and } \partial_{\sigma} \tau(\sigma,t_{1}) - \partial_{\sigma}\tau(\sigma,t_{2}) > 0 \text{ for } \sigma \in [y_{0},s]\}. \end{align*}
Note that $J^{\ast} := [y_{0},s^{\ast})$ is a non-degenerate interval, since (recalling \eqref{ODE} and \eqref{form7})
\begin{displaymath} \tau(y_{0},t_{1}) - \tau(y_{0},t_{2}) = t_{1} - t_{2} < 0 \quad \text{and} \quad \partial_{s}\tau(y_{0},t_{1}) - \partial_{s}\tau(y_{0},t_{2}) = A(t_{2} - t_{1})^{\tfrac{1}{2} + \tfrac{\alpha}{4}} > 0, \end{displaymath}
and the maps involved are continuous.

The main work goes into establishing the following claim, which bootstraps the qualitative positivity of the derivative $\partial_{s} \tau(s,t_{1}) - \partial_{s}\tau(s,t_{2})$ into a quantitative statement:

\begin{claim}\label{claim1} For all $s \in J^{\ast}$:
\begin{equation}\label{eq:LowerBoundDeriv}
\partial_s\tau(s,t_1)-\partial_s \tau(s,t_2)\geq \tfrac{A}{2}
(t_2-t_1)^{\tfrac{1}{2}+\tfrac{\alpha}{4}}.
\end{equation}
\end{claim}

\begin{proof}[Proof of Claim \ref{claim1}] Since $\tau(y_0,t_1)-\tau(y_0,t_2)=t_1-t_2<0$, and the $s$-derivative of the difference is positive on $J^{\ast}$ by definition, we deduce that
\begin{equation}\label{eq:tauEstimateOnJ}
|\tau(s,t_2)-\tau(s,t_1)|=\tau(s,t_2)-\tau(s,t_1)\leq
\tau(y_0,t_2)-\tau(y_0,t_1)= t_2-t_1,\quad s\in
J^{\ast}. \end{equation}
To proceed, we also record the formula
\begin{equation}\label{form8} \phi(s,\tau(s,t))) = \int_{y_{0}}^{s} \nabla^{\phi}\phi(r,\tau(r,t)) \, dr + \phi(y_{0},t), \qquad s \in \mathbb{R},
\end{equation}
given by Lemma \ref{l:integral_along_curves} below. In particular, for $t \in \{t_{1},t_{2}\}$, this formula is applicable for $s \in J^{\ast}$. We then make the following estimate for $s \in J^{\ast}$, using first \eqref{ODE}, then \eqref{form8}:
\begin{align}\label{form10} |[\partial_{s}\tau(s,t_{1}) - \partial_{s}\tau(s,t_{2})] & - A(t_{2} - t_{1})^{\tfrac{1}{2} + \tfrac{\alpha}{4}}|\\
& = |[\partial_{s}\tau(s,t_{1}) - \partial_{s}\tau(s,t_{2})] - [\partial_{s}\tau(y_{0},t_{1}) - \partial_{s}\tau(y_{0},t_{2})]| \notag \\
& = |[\phi(s,\tau(s,t_{1})) - \phi(s,\tau(s,t_{2}))] - [\phi(y_{0},t_{1}) - \phi(y_{0},t_{2})]| \notag \\
&\label{form47} \leq \int_{y_{0}}^{s}
|\nabla^{\phi}\phi(r,\tau(r,t_{1})) -
\nabla^{\phi}\phi(r,\tau(r,t_{2}))| \, dr. \notag\end{align}
From this point on, the cases $\alpha = 0$ and $\alpha > 0$ require slightly different estimates. If $\alpha = 0$, the expression above is simply bounded by
\begin{equation}\label{form46} \eqref{form10} \leq 2H|s - y_{0}|
\overset{\eqref{eq:TildeJ}}{\leq} 2H B(t_2-t_1)^{\tfrac{1}{2}}.
 \end{equation}
For the case $\alpha > 0$, we first use the $C^{1,\alpha}(\W)$
hypothesis, see formula \eqref{form25}, then
\eqref{eq:tauEstimateOnJ}, and finally the upper bound on the
length of $J^{\ast} \subset J_{0}$, to obtain for all $s\in
J^{\ast}$, $s>y_0$, that
\begin{align}\label{form9} \eqref{form10} \leq 2H\int_{y_{0}}^{s} |\tau(r,t_{1}) - \tau(r,t_{2})|^{\tfrac{\alpha}{2}} \,
dr& \overset{\eqref{eq:tauEstimateOnJ}}{\leq} 2H\int_{y_{0}}^{s}
|t_2-t_1|^{\tfrac{\alpha}{2}} \, dr \notag \\&\leq 2H
|t_2-t_1|^{\tfrac{\alpha}{2}}|s-y_0|\notag
\\&\overset{\eqref{eq:TildeJ}}{\leq} 2H B
|t_2-t_1|^{\tfrac{1}{2}+\tfrac{\alpha}{4}}.
\end{align}
Combining \eqref{form46} and \eqref{form9}, and using $HB = A/8 < A/4$ (recall \eqref{eq:TildeJ}), we
obtain for all $\alpha\in [0,1]$ and $s\in J^{\ast}$ that
\begin{equation}\label{eq:form10}
\eqref{form10} < \tfrac{A}{2}(t_{2} - t_{1})^{\tfrac{1}{2} +
\tfrac{\alpha}{4}}.
\end{equation}
Therefore,
\begin{displaymath}
\partial_s\tau(s,t_1)-\partial_s \tau(s,t_2)\geq \tfrac{A}{2}
(t_2-t_1)^{\tfrac{1}{2}+\tfrac{\alpha}{4}} ,\quad s\in J^{\ast}.
\end{displaymath}
verifying \eqref{eq:LowerBoundDeriv}, and Claim \ref{claim1}. \end{proof}

Now, recall that $\tau(y_{0},t_{1}) - \tau(y_{0},t_{2}) = t_{1} - t_{2}$. Since $s \mapsto \tau(s,t_{1}) - \tau(s,t_{2}) < 0$ for $s \in J^{\ast}$, we may deduce from the derivative lower bound \eqref{eq:LowerBoundDeriv} that
\begin{displaymath} |J^{\ast}| \leq \frac{t_{2} - t_{1}}{\tfrac{A}{2}(t_{2} - t_{1})^{\tfrac{1}{2} + \tfrac{\alpha}{4}}} = \tfrac{2}{A}(t_{2} - t_{1})^{\tfrac{1}{2} - \tfrac{\alpha}{4}} < B(t_{2} - t_{1})^{\tfrac{1}{2} - \tfrac{\alpha}{4}}, \end{displaymath}
using finally that $B = A/(8H)$ and $A > 4H^{1/2}$. In particular $s^{\ast} \in \mathrm{int\,} J_{0}$.

We now claim that $\tau(s^{\ast},t_{1}) = \tau(s^{\ast},t_{2})$. Indeed, by the definition of $J^{\ast}$ and $s^{\ast}$, the continuity of the relevant functions, and since $s^{\ast} \in \mathrm{int\,} J_{0}$, either
\begin{displaymath} \tau(s^{\ast},t_{1}) = \tau(s^{\ast},t_{2}) \quad \text{or} \quad \partial_{s}\tau(s^{\ast},t_{1}) = \partial_{s}\tau(s^{\ast},t_{2}). \end{displaymath}
However, the second possibility is ruled out by the quantitative estimate \eqref{eq:LowerBoundDeriv} and the continuity of $s \mapsto \partial_{s}\tau(s,t_{1}) - \partial_{s}\tau(s,t_{2})$. Thus, $\tau(s^{\ast},t_{1}) = \tau(s^{\ast},t_{2})$, and consequently also
\begin{displaymath} \partial_{s}\tau(s^{\ast},t_{1}) \stackrel{\eqref{ODE}}{=} \phi(s^{\ast},\tau(s^{\ast},t_{1})) = \phi(s^{\ast},\tau(s^{\ast},t_{2})) \stackrel{\eqref{ODE}}{=} \partial_{s}\tau(s^{\ast},t_{2}). \end{displaymath}
But we just argued this is ruled out by \eqref{eq:LowerBoundDeriv}. A contradiction has been reached, and the proof of Proposition \ref{p:improvedRegularity} is complete. \end{proof}

We recall a useful integration formula for intrinsic gradients, which has been used in the proofs of the previous propositions.

\begin{lemma}\label{l:integral_along_curves}
Assume that $\phi \in C^{1}(\W)$. Then, for all $y_0,t\in\mathbb{R}$ and every $\mathcal{C}^1$ function $\tau(\cdot,t):(y_0-\delta,y_0+ \delta)\to \mathbb{R}$ satisfying
\begin{displaymath}
\left\{\begin{array}{ll}\partial_s\tau(s,t)=\phi(s,\tau(s,t)),&s\in (y_0-\delta,y_0+\delta),\\\tau(y_0,t)=t,\end{array} \right.
\end{displaymath}
one has
\begin{displaymath}
\phi(s,\tau(s,t)) = \int_{y_0}^s \nabla^{\phi}\phi(r,\tau(r,t))\;dr + \phi(y_0,t),\quad s\in  (y_0-\delta,y_0+\delta).
\end{displaymath}
\end{lemma}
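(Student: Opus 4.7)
The plan is to show that $F(s) := \phi(s,\tau(s,t))$ is $\calC^1$ on $J := (y_0-\delta,\,y_0+\delta)$ with $F'(s) = \nabla^{\phi}\phi(s,\tau(s,t))$; once this is in hand, the integral formula is immediate from the fundamental theorem of calculus (matching values at $y_0$), since $s \mapsto \nabla^{\phi}\phi(s,\tau(s,t))$ is continuous by the $C^{1}(\W)$-hypothesis. Continuity of $F$ itself follows from the continuity of $\phi$ (an intrinsically differentiable function is continuous, e.g.\ by Proposition 4.74 in \cite{MR3587666}) together with that of $\tau$. Fix $s_0 \in J$, set $w_0 := (s_0, \tau(s_0,t))$, $p_0 := w_0 \cdot \phi(w_0)$, and $\psi := \phi^{(p_0^{-1})}$. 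The explicit translation formula \eqref{eq:translated_function} produces
\[
\psi(a,b) = \phi\bigl(s_0+a,\; \tau(s_0,t) + b + a\,\phi(w_0)\bigr) - \phi(w_0).
\]
Choosing $b = b^{(h)} := \tau(s_0+h,t) - \tau(s_0,t) - h\,\phi(w_0)$, the ODE $\partial_s\tau = \phi(s,\tau)$ rewrites this as $b^{(h)} = \int_{s_0}^{s_0+h}[F(r)-F(s_0)]\,dr$, and the choice of $b$ makes the shift collapse, yielding the key identity $F(s_0+h) - F(s_0) = \psi(h,b^{(h)})$.

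Since $\phi$ is intrinsically differentiable at $w_0$ with intrinsic gradient $\nabla^{\phi}\phi(w_0)$, for each $\epsilon > 0$ there exists $\eta > 0$ (depending on $s_0$) such that $|\psi(a,b) - \nabla^{\phi}\phi(w_0)\,a| \leq \epsilon\|(a,b)\|$ whenever $\|(a,b)\| < \eta$. The main obstacle is a scaling mismatch: on $\W$ the Kor\'anyi norm satisfies $\|(a,b)\| \sim \max(|a|,|b|^{1/2})$, so concluding $F'(s_0) = \nabla^{\phi}\phi(w_0)$ demands $|b^{(h)}| = O(h^{2})$, i.e.\ that $F$ be locally Lipschitz at $s_0$, whereas continuity of $F$ only gives $|b^{(h)}| = o(h)$. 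I would overcome this via a self-improving bootstrap. Set $\omega(h) := \sup_{|k| \leq |h|}|F(s_0+k) - F(s_0)|$ and $M := |\nabla^{\phi}\phi(w_0)|$. For $|h|$ small enough that $\|(k,b^{(k)})\| < \eta$ for all $|k|\leq |h|$, applying the above estimate with $\epsilon = 1$ gives, using $|b^{(k)}| \leq |h|\omega(h)$,
\[
|\psi(k,b^{(k)})| \leq M|k| + \|(k,b^{(k)})\| \lesssim M|h| + \max\bigl(|h|,\,(|h|\omega(h))^{1/2}\bigr).
\]
Taking the supremum over $|k| \leq |h|$ and dividing by $|h|$, the quantity $r(h) := \omega(h)/|h|$ satisfies $r(h) \lesssim M + \max(1,\sqrt{r(h)})$, which forces $r(h) \lesssim_{M} 1$.

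With the local Lipschitz bound $\omega(h) \lesssim_{M} |h|$ in hand, $|b^{(h)}| \lesssim_{M} |h|^{2}$ and thus $\|(h,b^{(h)})\| \sim |h|$. Feeding this back into the intrinsic-differentiability estimate with arbitrary $\epsilon > 0$ yields $|F(s_0+h)-F(s_0) - \nabla^{\phi}\phi(w_0)\,h| \leq \epsilon\|(h,b^{(h)})\| \lesssim_{M} \epsilon|h|$, so $F$ is differentiable at $s_0$ with $F'(s_0) = \nabla^{\phi}\phi(s_0,\tau(s_0,t))$. Since $s_0 \in J$ was arbitrary and this derivative is continuous, $F \in \calC^{1}(J)$, and the fundamental theorem of calculus delivers
\[
\phi(s,\tau(s,t)) - \phi(y_0,t) = \int_{y_0}^{s} \nabla^{\phi}\phi(r,\tau(r,t))\,dr, \qquad s \in J,
\]
which is the conclusion of the lemma.
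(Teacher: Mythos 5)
Your proof is correct, and it takes a genuinely different route from the paper's. The paper proves this lemma with a one-line citation: since $\phi\in C^1(\W)$, $\phi$ satisfies condition (iii) of Theorem 4.95 in \cite{MR3587666}, hence the equivalent condition (ii), which says precisely that $s\mapsto\phi(s,\tau(s,t))$ is $\mathcal{C}^1$ with derivative $\nabla^{\phi}\phi(r,\tau(r,t))$. You instead give a self-contained argument, re-deriving this portion of the cited theorem from the bare definition of intrinsic differentiability. The interesting technical content of your proof --- the bootstrap that upgrades mere continuity of $F(s)=\phi(s,\tau(s,t))$ to a local Lipschitz estimate $\omega(h)\lesssim_M|h|$, which is needed because the anisotropic scaling $\|(a,b)\|\sim\max(|a|,|b|^{1/2})$ on $\W$ otherwise leaves the error term $\epsilon\|(h,b^{(h)})\|$ too large compared to $|h|$ --- is exactly the kind of argument that must be happening inside the proof of the quoted equivalence in \cite{MR3587666}, so conceptually you are not taking a shortcut around anything, but what you gain is transparency and independence from the external reference. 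Both the translation identity $F(s_0+h)-F(s_0)=\psi(h,b^{(h)})$ (which I checked against \eqref{eq:translated_function}) and the closing inequality $r(h)\lesssim M+\max(1,\sqrt{r(h)})\Rightarrow r(h)\lesssim_M 1$ are correct. The paper's approach is shorter; yours is more informative, and it would be a reasonable alternative if the authors preferred to keep the paper self-contained.
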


\begin{proof} Fix $y_{0},t \in \R$, and let $\tau(\cdot,t)$ be the $\calC^{1}$ function from the hypothesis. By assumption, $\phi$ is intrinsic differentiable and $\nabla^{\phi}\phi$ is continuous. Hence, $\phi$ satisfies condition (iii) of Theorem 4.95 in \cite{MR3587666}. It follows that $\phi$ also satisfies the equivalent condition (ii), which in our terminology means that the function $s \mapsto \phi(s,\tau(s,t)) =: f(s)$ is in $\calC^{1}$, and hence can be recovered by integrating its derivative:
\begin{displaymath} \phi(s,\tau(s,t)) = \int_{y_0}^s f'(r) \;dr + \phi(y_0,t),\quad s\in  (y_0-\delta,y_0+\delta). \end{displaymath}
Finally, Theorem 4.95 in \cite{MR3587666} also states that $f'(r) = \nabla^{\phi}\phi(r,\tau(r,t))$, and the proof is complete. \end{proof}

We conclude this section with an example related to the definition of $C^{1,\alpha}(\W)$ functions.
The example demonstrates that the property of having a locally $\alpha$-H\"older continuous intrinsic gradient in the metric space $(\W,d)$ is not left-invariant, unlike the $C^{1,\alpha}(\W)$ definition (Lemma \ref{l:translation}).

\begin{ex}\label{ex1} Fix $\alpha \in (0,1]$, let $\W$ be the $(y,t)$-plane, and consider the function $\phi(y,t) = 1 + |t|^{1 + \tfrac{\alpha}{2}}$. Then, using the formula $\nabla^{\phi}\phi = \phi_{y} + \phi\phi_{t}$, we compute
\begin{align*} \nabla^{\phi}\phi(y,t) & = \sgn(t)(1 + |t|^{1 + \tfrac{\alpha}{2}})(1 + \tfrac{\alpha}{2})|t|^{\tfrac{\alpha}{2}}\\
& = \sgn(t)(1 + \tfrac{\alpha}{2})(|t|^{\tfrac{\alpha}{2}} + |t|^{1 + \alpha}). \end{align*}
We first note that $\nabla^{\phi}(y,t)$ is metrically locally $\alpha$-H\"older continuous. Indeed, fix $(y_{1},t_{1}),(y_{2},t_{2}) \in \W$ with $0 \leq t_{1} \leq t_{2}$, and note that
\begin{displaymath} |\nabla^{\phi}\phi(y_{2},t_{2}) - \nabla^{\phi}\phi(y_{1},t_{1})| \sim (t_{2}^{1 + \alpha} - t_{1}^{1 + \alpha}) + (t_{2}^{\tfrac{\alpha}{2}} - t_{1}^{\tfrac{\alpha}{2}}). \end{displaymath}
For $t_{2} - t_{1} \leq 1$, both terms can be bounded by $(t_{2} - t_{1})^{\alpha/2}$. This, and similar computations in the cases $t_{1} \leq t_{2} \leq 0$ and $t_{1} \leq 0 \leq t_{2}$, imply that $\nabla^{\phi}\phi$ is locally $\alpha$-H\"older continuous in the space $(\W,d)$. Now, consider $\phi^{(p^{-1})}$ with $p = \Phi(0)$, and note that, using \eqref{form25},
\begin{align*} |\nabla^{\phi^{(p^{-1})}} \phi^{(p^{-1})}(y,0) - \nabla^{\phi^{(p^{-1})}}\phi^{(p^{-1})}(0,0)| & = |\nabla^{\phi}\phi(y,\phi(0,0)y) - \nabla^{\phi}\phi(0,0)|\\
& = (1 + \tfrac{\alpha}{2})(|y|^{\tfrac{\alpha}{2}} + |y|^{1 + \alpha}). \end{align*}
The last expression is not bounded by a constant times $\|(y,0)\|^{\alpha}$ as $y \to 0$, so the intrinsic gradient of $\phi^{(p^{-1})}$ is not locally $\alpha$-H\"older continuous at the origin. \end{ex}

\section{Boundedness of the Riesz transform, and removability}\label{sec:rem}

In this section we prove Theorem \ref{mainRem} and its corollaries. We start by recalling the statement of Theorem \ref{mainRem}:
\begin{thm}\label{mainRem2} Assume that $\mu$ is a Radon measure on $\He$, satisfying $\mu(\He)>0$, $\mu(B(p,r)) \leq Cr^{3}$ for $p\in\He,r>0$, and such that the support $\spt \mu$ has locally finite $3$-dimensional Hausdorff measure. If $\calR_{\He}$ is bounded on $L^{2}(\mu)$, then $\spt \mu$ is not removable for Lipschitz harmonic functions. \end{thm}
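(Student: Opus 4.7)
The plan is to follow the Mattila--Paramonov strategy from \cite{MR1372240}, suitably adapted to the sub-Riemannian setting. The goal is to construct a function $U \colon \He \to \R$ which is Lipschitz with respect to the Kor\'{a}nyi metric, harmonic on $\He \setminus \spt \mu$, but whose distributional sub-Laplacian is a non-trivial measure supported in $\spt \mu$. Such a $U$ immediately witnesses non-removability: if $\spt\mu$ were removable, $U$ would extend to a harmonic function on all of $\He$, forcing $\Delta_\He U = 0$ globally, which contradicts the presence of its non-trivial distributional source.

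The first step is to use the $L^{2}(\mu)$-boundedness of $\calR_\He$ to extract a compact ``good set'' $E \subset \spt \mu$ with $\mu(E) > 0$ on which the maximal truncated transform is pointwise controlled. By Cotlar's inequality in homogeneous spaces --- applicable since $\calK_\He$ is a $3$-dimensional CZ kernel and $\mu$ satisfies \eqref{muGrowth} --- the operator $\calR_\He^{\ast}f(p) := \sup_{\epsilon > 0} |\calR_{\mu,\epsilon}f(p)|$ is of weak type $(1,1)$ with respect to $\mu$. A Chebyshev argument then yields a compact $E \subset \spt\mu$ with $\mu(E) > 0$ and a constant $M < \infty$ such that $\sup_{\epsilon > 0} |\calR_{\mu,\epsilon}(\chi_{E})(p)| \leq M$ for every $p \in E$. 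Setting $\nu := \mu|_{E}$, I would define
\begin{equation*}
U(p) := c_{0} \int \|q^{-1} \cdot p\|^{-2} \, d\nu(q), \qquad p \in \He,
\end{equation*}
with $c_{0}$ chosen so that $c_{0}\|\cdot\|^{-2}$ is the fundamental solution of $-\Delta_\He$ (Folland \cite{MR0315267}). The growth bound $\nu(B(p,r)) \lesssim r^{3}$ together with a layer-cake estimate shows that $U$ is finite-valued and continuous on all of $\He$. Off $\spt\nu$, differentiation under the integral gives $\nabla_\He U(p) = \calR_\He \nu(p)$ as an absolutely convergent integral, and $\Delta_\He U(p) = 0$ since $\|\cdot\|^{-2}$ is $\Delta_\He$-harmonic away from $\mathbf{0}$. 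In particular, $U$ is harmonic on $\He \setminus \spt \mu$.

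The remaining and principal task is to prove that $U$ is Kor\'{a}nyi-Lipschitz. For $p \in \He \setminus \spt\nu$, splitting the integral defining $\calR_\He \nu(p)$ at the scale $\epsilon := \dist(p,E)$, the ``far'' part is controlled by the growth \eqref{form0} of $\calK_\He$ and the $3$-growth of $\nu$, while the ``near'' part matches $\calR_{\mu,\epsilon}(\chi_{E})(p^{\ast})$ at a closest point $p^{\ast} \in E$, up to an error bounded by the H\"older estimate of Lemma \ref{l:standardHolder}. Together with the choice of $E$, this yields $|\nabla_\He U(p)| \leq C$ for all $p \in \He \setminus \spt\nu$. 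Promoting this off-support pointwise bound to a global metric Lipschitz bound is where the auxiliary Lemma \ref{l:preImage} enters: because $\spt \nu$ has locally finite $\calH^{3}$-measure, for almost every horizontal line $\ell \subset \He$ the intersection $\ell \cap \spt\nu$ has $\calH^{1}$-measure zero. On such a generic horizontal line, a smooth mollification argument shows $U|_{\ell}$ is absolutely continuous with derivative bounded by $C$ outside an $\calH^{1}$-null set, hence Lipschitz; summing along a horizontal concatenation connecting two arbitrary points of $\He$ then delivers a Lipschitz bound with respect to the Carnot--Carath\'{e}odory (equivalently, Kor\'{a}nyi) metric.

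The main obstacle is precisely this passage from an a.e.\ bound on $\nabla_\He U$ off the exceptional set $\spt \nu$ to a genuine metric Lipschitz bound. Unlike the Euclidean situation, where an $(n-1)$-dimensional set automatically meets almost every line in an $\calL^{1}$-null set, a set of locally finite $3$-dimensional Hausdorff measure in $\He$ can in principle have substantial intersection with horizontal lines, and only a slicing statement of the form of Lemma \ref{l:preImage} allows one to circumvent this difficulty.
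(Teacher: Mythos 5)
Your overall blueprint is the right one and matches the paper's: produce a non-trivial Radon measure $\nu$ supported in $\spt\mu$ with $3$-growth such that the Kor\'{a}nyi-potential $U = \Phi \ast \nu$ (with $\Phi = c_0\|\cdot\|^{-2}$) is continuous on $\He$, harmonic and $\calC^\infty$ off $\spt\nu$ with $\nabla_\He U$ bounded there, and then deduce $U$ is globally Lipschitz from the horizontal-slicing Lemma~\ref{l:preImage} (via Lemma~\ref{l:lipschitzLemma} in the paper), while $\langle \Delta_\He U, 1\rangle \neq 0$ forces non-harmonicity. Your identification of the slicing lemma as the crucial non-Euclidean ingredient is correct, and your off-support bound for $\nabla_\He U$ by comparison with the truncated transform at a nearest point of $\spt\nu$ is the classical Calder\'{o}n--Zygmund regularisation step.

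The extraction of the ``good set'' $E$, however, has a genuine gap. You propose: Cotlar's inequality gives weak $(1,1)$ for $\calR_\mu^\ast$, and Chebyshev then yields a compact $E$ with $\mu(E)>0$ and $\sup_\epsilon|\calR_{\mu,\epsilon}\chi_E(p)|\leq M$ for all $p \in E$. But Chebyshev applied to $\chi_F$ for some starting compact $F\subset\spt\mu$ only yields a subset $E \subset F$ with $\mu(E)\geq\mu(F)/2$ on which $\sup_\epsilon|\calR_{\mu,\epsilon}\chi_F(p)|\leq M$; since $\calR_{\mu,\epsilon}\chi_E = \calR_{\mu,\epsilon}\chi_F - \calR_{\mu,\epsilon}\chi_{F\setminus E}$ and the kernel has no sign, you do \emph{not} control $\calR_{\mu,\epsilon}\chi_E$ on $E$, which is what your subsequent nearest-point comparison requires. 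Iterating Chebyshev only makes things worse, since the measure may keep halving. The paper circumvents exactly this obstruction with the Davie--{\O}ksendal dualisation lemma (Theorem~\ref{dual}), which produces a \emph{function} $h$ with $0\leq h \leq \chi_E$ and $\int h\,d\mu \geq \mu(E)/2$ such that the smoothed transforms $\tilde{\calR}^i_{\mu,\epsilon}h_\epsilon$ are bounded in $\sup$-norm \emph{on all of $\He$} (not just on $E$); after a weak$^\ast$ compactness argument, one then sets $\nu = h\,d\mu$. This global $L^\infty$ bound makes the nearest-point comparison superfluous and is the step that makes the construction go through.

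Two secondary points. First, $\mu$ is only assumed to satisfy the upper $3$-growth bound, so it need not be doubling; ``Cotlar's inequality in homogeneous spaces'' is not available, and the weak $(1,1)$ estimate for measures that the dualisation lemma needs -- boundedness of $\tilde{\calR}^{i,\ast}_\epsilon:\mathcal{M}(\He)\to L^{1,\infty}(\mu)$ -- comes instead from the non-homogeneous theory of Nazarov--Treil--Volberg (cited as \cite[Corollary~9.2]{MR1626935}). Second, the paper works with smoothed truncations $\tilde{\calR}_\epsilon$ (which map $\mathcal{M}(\He)$ into $\calC_0(\He)$, as the dualisation lemma requires) rather than the sharp truncations; the two differ by something controlled by the maximal function, as in Lemma~\ref{l:smoothcomp}, so this is a harmless but necessary adjustment you should make explicit.
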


We will need a few auxiliary results in the proof. The first one establishes that $\He$ is ``horizontally polygonally quasiconvex'', which means that the space is well connected by segments of horizontal lines. A \emph{horizontal line} is simply a set of the form $\pi_{\mathbb{W}}^{-1}\{w\}$ for some vertical subgroup $\W$ and a point $w\in \W$. In other words, the horizontal lines are left cosets of $1$-dimensional horizontal subgroups.

\begin{lemma}\label{l:horizontalQuasiconvexity} Let $z_{1},z_{2} \in \He$. Then, there exist five horizontal segments $\ell_{1},\ldots,\ell_{5}$ with connected union, containing $z_{1},z_{2}$, and such that $\sum \calH^{1}(\ell_{j}) \leq 3d(z_{1},z_{2})$.
\end{lemma}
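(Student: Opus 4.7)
The plan is to reduce to $z_{1} = \mathbf{0}$ using the left-invariance of both the metric $d$ and the family of horizontal lines. Concretely, each horizontal line $\pi_{\W}^{-1}\{w\}$ equals the left coset $w \cdot \V$ for a $1$-dimensional horizontal subgroup $\V$, and left translation maps cosets of $\V$ to cosets of $\V$; so if the claim holds for $(\mathbf{0},\,z_{1}^{-1}\cdot z_{2})$, left-translating the five segments by $z_{1}$ yields the claim for $(z_{1},z_{2})$. I would then write $z_{2} = (x,y,t)$ and $D := d(\mathbf{0},z_{2}) = ((x^{2}+y^{2})^{2} + 16 t^{2})^{1/4}$, and try to build a polygonal horizontal arc from $\mathbf{0}$ to $z_{2}$ of total length $\leq 3D$.

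\textbf{Construction.} The idea is to split the path into two parts: first a horizontal planar translation to the auxiliary point $(x,y,0)$, and then a vertical ``boost'' realised by a four-segment horizontal quadrilateral. Concretely, let $\ell_{1}$ be the segment of the horizontal subgroup $\{(sx,sy,0) : s\in \R\}$ from $\mathbf{0}$ to $(x,y,0)$; its Kor\'anyi length is $\sqrt{x^{2}+y^{2}}$. Next, choose $a,b \in \R$ with $ab = t$ and $|a|=|b| = \sqrt{|t|}$, and consider the origin-based polygon through the vertices
\begin{displaymath}
\mathbf{0}, \;\; (a,0,0), \;\; (a,b,\tfrac{ab}{2}), \;\; (0,b,ab), \;\; (0,0,ab)=(0,0,t).
\end{displaymath}
A direct computation with \eqref{eq:group_law} will show that consecutive vertices lie on a common horizontal line (a coset of either the $x$-axis or the $y$-axis), and that the four consecutive Kor\'anyi distances equal $|a|,|b|,|a|,|b|$, summing to $4\sqrt{|t|}$. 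I would then take $\ell_{2},\ldots,\ell_{5}$ to be the left translates of these four segments by $(x,y,0)$. Since left translation preserves both horizontal lines and distances, each $\ell_{j}$ is horizontal with the same length as its origin-based counterpart, $\ell_{2}$ starts at $(x,y,0)$, and $\ell_{5}$ ends at $(x,y,0)\cdot(0,0,t) = (x,y,t) = z_{2}$. Thus $\ell_{1}\cup\cdots\cup\ell_{5}$ is a connected polygonal arc from $z_{1}$ to $z_{2}$.

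\textbf{Length estimate.} Setting $u := \sqrt{x^{2}+y^{2}}$ and $v := 2\sqrt{|t|}$, the defining formula \eqref{d:koranyiNorm} gives $D^{4} = u^{4} + v^{4}$, so $u \leq D$ and $v \leq D$. Therefore
\begin{displaymath}
\sum_{j=1}^{5} \calH^{1}(\ell_{j}) = u + 4\sqrt{|t|} = u + 2v \leq D + 2D = 3D = 3 d(z_{1},z_{2}),
\end{displaymath}
which is the required inequality. In the degenerate cases $(x,y) = \mathbf{0}$ or $t = 0$ some $\ell_{j}$ collapse to a point and can be retained to keep the count at five. I do not anticipate a serious obstacle: the only non-routine ingredient is choosing $|a|=|b|=\sqrt{|t|}$ so that the four-segment loop costs exactly $4\sqrt{|t|} = 2v$, after which the $\ell^{4}$-structure of the Kor\'anyi norm produces the clean constant $3$.
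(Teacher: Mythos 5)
Your proof is correct and takes essentially the same approach as the paper: reduce to $z_{1}=\mathbf{0}$ by left-invariance, travel horizontally to $(x,y,0)$ with one segment of length $\sqrt{x^{2}+y^{2}}$, then realise the vertical displacement by a four-segment lift of a planar square of side $\sqrt{|t|}$, contributing $4\sqrt{|t|}$, and bound the total by $3d(z_{1},z_{2})$ via the $\ell^{4}$ structure of the Kor\'anyi norm. The only difference is that you write out the vertices of the lifted square explicitly, which the paper leaves to a reference.
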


\begin{proof} We may assume that $z_{1} = \mathbf{0}$, since left translations are isometries and send horizontal lines to horizontal lines. Now, we first discuss the case $z_{1} = \mathbf{0}$ and $z_{2} = (0,0,t_{2})$. Plot a square of side-length $\sqrt{|t|}$ in the $(y,t)$-plane, with one vertex at $z_{1}$. Then, it is well-known (see Sections 2.2--2.3 in \cite{MR2312336}) that there exists a piecewise linear horizontal curve (a lift of the square), consisting of four horizontal line segments, connecting $z_{1}$ to $(0,0,t_{2})$, and with total length $4\sqrt{|t_{2}|} = 2d(z_{1},z_{2})$. This completes the case $z_{2} = (0,0,t_{2})$.

If $z_{2} = (x_{2},y_{2},t_{2})$ is arbitrary, first connect $z_{1} = \mathbf{0}$ to the point $(x_{2},y_{2},0)$ with a horizontal line of length $\leq d(z_{1},z_{2})$. Then, by the previous discussion (and a left-translation), the points $(x_{2},y_{2},0)$ and $z_{2}$ can be connected by four horizontal line segments of total length $\leq 4\sqrt{|t_{2}|} \leq 2d(z_{1},z_{2})$.
\end{proof}

 The following result is an analogue of Lemma 7.7 in \cite{zbMATH01249699} for vertical Heisenberg projections. It was a surprise to the authors that the proof works even though the maps $\pi_{\W}$ are not Lipschitz.
 \begin{lemma}\label{l:preImage} Fix a vertical plane $\W$, and let $E \subset \He$. Then,
 \begin{displaymath} \int^{\ast}_{\W} \calH^{s - 3}(E \cap \pi_{\W}^{-1}\{w\}) \, d\calL^{2}(w) \lesssim \calH^{s}(E), \quad 3 \leq s \leq 4. \end{displaymath}
 \end{lemma}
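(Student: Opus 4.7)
The plan is to mimic the standard covering argument behind the Euclidean version of this statement (Lemma 7.7 in \cite{zbMATH01249699}), replacing the customary Lipschitz bound on the projection by a direct geometric estimate on the $\calL^2$-measure of $\pi_{\W}(B)$ for a Koranyi ball $B$. The whole argument then reduces to one key fact:

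\medskip

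\textbf{Key geometric step.} For every ball $B = B(z_0, r) \subset \He$ and every vertical plane $\W$, one has $\calL^2(\pi_{\W}(B)) \lesssim r^3$, with an implicit constant independent of $z_0$ and $\W$. To prove this, one may assume $\W$ is the $(y,t)$-plane, so that $\pi_{\W}(x,y,t)=(0,y,t+\tfrac12 xy)$. Writing $z=z_0\cdot p$ with $z_0=(x_0,y_0,t_0)$ and $p=(x,y,t)$ satisfying $\|p\|\leq r$ (hence $|x|,|y|\leq r$ and $|t|\leq r^2/4$), a direct computation gives
\[
 \pi_{\W}(z)=\bigl(0,\; y_0+y,\; T_0 + x_0 y + t + \tfrac12 xy\bigr),\qquad T_0:=t_0+\tfrac12 x_0 y_0.
\]
Setting $y':=y_0+y$, the image $\pi_{\W}(B(z_0,r))$ lies in the tilted strip
\[
 \{(0,y',T') : |y'-y_0|\leq r,\; |T'-T_0-x_0(y'-y_0)|\leq r^2\},
\]
whose $\calL^2$-measure is $\leq 4r^3$ by Fubini. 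It is precisely the shearing term $x_0(y'-y_0)$ that makes the $y'$-slice widths $r^2$ (rather than the naive $|x_0|r + r^2$), compensating for the non-Lipschitz nature of $\pi_{\W}$.

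\medskip

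\textbf{Standard covering step.} Fix $\delta>0$ and $\epsilon>0$, and choose a cover $\{B_i\}$ of $E$ by Koranyi balls $B_i=B(z_i,r_i)$ with $r_i\leq\delta$ and $\sum_i r_i^s \leq \calH^s_\delta(E)+\epsilon$ (such a cover is obtained from an optimal $\delta$-cover by enclosing each covering set in a ball of comparable diameter). For each $w\in\W$, the intersection $B_i \cap \pi_{\W}^{-1}\{w\}$, when non-empty, has diameter $\leq 2r_i\leq 2\delta$, so it provides a single-set $2\delta$-cover of itself; thus
\[
 \calH^{s-3}_{2\delta}\bigl(E \cap \pi_{\W}^{-1}\{w\}\bigr) \leq \sum_i (2r_i)^{s-3}\,\chi_{\pi_{\W}(B_i)}(w),
\]
which uses $s\geq 3$ so that $(2r_i)^{s-3}$ is a genuine upper bound for $\calH^{s-3}_{2\delta}$ on a set of diameter $\leq 2r_i$.

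\medskip

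\textbf{Integration and limit.} Integrating in $w$ and applying the geometric step,
\[
 \int_{\W}^{\ast} \calH^{s-3}_{2\delta}\bigl(E\cap\pi_{\W}^{-1}\{w\}\bigr)\,d\calL^2(w) \lesssim \sum_i r_i^{s-3}\cdot r_i^3 = \sum_i r_i^s \leq \calH^s_\delta(E)+\epsilon.
\]
As $\delta\to 0$, $\calH^{s-3}_{2\delta}$ increases monotonically to $\calH^{s-3}$, so Fatou's lemma (applied to the upper integral) yields
\[
 \int_{\W}^{\ast} \calH^{s-3}\bigl(E\cap\pi_{\W}^{-1}\{w\}\bigr)\,d\calL^2(w) \lesssim \calH^s(E).
\]

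\medskip

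The principal obstacle is the key geometric step: because $\pi_{\W}$ fails to be Lipschitz (its Euclidean Jacobian blows up with $|x_0|$), one cannot read off $\calL^2(\pi_{\W}(B))\lesssim r^3$ from any na\"ive estimate. The substance of the argument lies in realising that $\pi_{\W}(B)$ is a \emph{sheared} parallelogram whose slice widths in the $t$-direction still scale like $r^2$, so that the shear direction exactly absorbs the translation by $z_0$. Once this is known, the rest is a textbook covering argument valid for any $s\in[3,4]$ (the range $s>4$ being trivial since horizontal lines have Hausdorff dimension one).
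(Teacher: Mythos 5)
Your argument is correct and follows the same covering-plus-Fatou scheme as the paper. The one place you diverge is the key geometric estimate $\calL^2(\pi_{\W}(B(z_0,r))) \lesssim r^3$: the paper simply cites Lemma 2.20 in \cite{MR3511465}, whereas you prove it from scratch by expanding $\pi_{\W}(z_0\cdot p)$ and observing that the shear term $x_0 y$ tilts, but does not widen, the $t$-slices of the image; this makes your proof self-contained and explains geometrically why the projection, though not Lipschitz, still maps $r$-balls to sets of $\calL^2$-measure $\lesssim r^3$.
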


 \begin{proof} If $\calH^{s}(E) = \infty$, there is nothing to prove. Otherwise, for all $k \in \N$, and cover $E$ by balls $B_{i,k} := B(z_{i,k},r_{i,k})$, $i,k \in \N$, with $0 < r_{i,k} < \epsilon$ and
 \begin{displaymath} \sum_{i \in \N} r_{i,k}^{s} \lesssim \calH_{1/k}^{s}(E) + \tfrac{1}{k}. \end{displaymath}
 Then, by Lemma 2.20 in \cite{MR3511465}, we have
 \begin{displaymath} \calL^{2}(\pi_{\W}(B_{i,k})) = Cr_{i,k}^{3}, \qquad i,k \in \N, \end{displaymath}
 for some positive and finite constant $C$. Consequently, by Fatou's lemma
 \begin{align*} \int_{\W}^{\ast} \calH^{s - 3}(E \cap \pi_{\W}^{-1}\{w\}) \, d\calL^{2}(w) & \leq \liminf_{k \to \infty} \int_{\W}^{\ast} \sum_{i \in \N} \diam(E \cap B_{i,k} \cap \pi_{\W}^{-1}\{w\})^{s - 3} \, d\calL^{2}(w)\\
 & \leq \liminf_{k \to \infty} \sum_{i \in \N} \int_{\pi_{\W}(B_{i,k})} \diam(E \cap B_{i,k} \cap \pi^{-1}_{\W}\{w\})^{s - 3} \, d\calL^{2}(w)\\
 & \lesssim \liminf_{k \to \infty} \sum_{i \in \N} \calL^{2}(\pi_{\W}(B_{i,k}))r_{i,k}^{s - 3} \lesssim \liminf_{k \to \infty} \sum_{i \in \N} r_{i,k}^{s} \lesssim \calH^{s}(E),  \end{align*}
 as claimed. \end{proof}

In this paper, Lemmas \ref{l:horizontalQuasiconvexity} and \ref{l:preImage} are only needed to prove the following criterion for a continuous map $f \colon \He \to \R$ to be Lipschitz:
\begin{lemma}\label{l:lipschitzLemma} Let $E \subset \He$ be a set of locally finite $3$-dimensional measure and let $f \colon \He \to \R$ be continuous. If $f \in \calC^{1}(\He \setminus E)$ and $\nabla_{\He} f \in L^{\infty}(\He \setminus E)$, then $f$ is Lipschitz on $\He$. \end{lemma}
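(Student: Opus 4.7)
The plan is to prove $|f(z_1) - f(z_2)| \leq 3L \cdot d(z_1, z_2)$ for all $z_1, z_2 \in \He$, where $L := \|\nabla_{\He} f\|_{L^{\infty}(\He \setminus E)}$, by reducing first to a one-dimensional Lipschitz estimate along generic horizontal lines and then patching via the five-segment path of Lemma \ref{l:horizontalQuasiconvexity}. I may assume $E$ is closed, as the hypotheses are preserved by replacing $E$ with $\overline{E}$.

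\textbf{Step 1 (Lipschitz regularity along good horizontal lines).} For a horizontal direction $v$, let $\W_v$ denote the complementary vertical subgroup, so that the horizontal lines in direction $v$ are exactly the fibers $\pi_{\W_v}^{-1}\{w\}$, $w \in \W_v$. I will apply Lemma \ref{l:preImage} with $s = 3$ to the sets $E \cap B(\mathbf{0}, R)$, which have finite $\calH^3$ measure by hypothesis, and take a countable union over $R \in \mathbb{N}$: for $\calL^2$-a.e.\ $w \in \W_v$ the intersection $E \cap \pi_{\W_v}^{-1}\{w\}$ is locally finite; call such lines \emph{good}. On a good line $\ell$ the closed set $E \cap \ell$ is discrete, and on each component of $\ell \setminus E$ the function $f$ is $\calC^{1}$ with tangential derivative equal to $\langle \nabla_{\He} f, v \rangle$, of modulus at most $L$. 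Continuity of $f$ across the discrete set $E \cap \ell$ then yields that $f|_\ell$ is $L$-Lipschitz on all of $\ell$.

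\textbf{Step 2 (Perturbing the path by left translation).} Fix $z_1, z_2 \in \He$. Lemma \ref{l:horizontalQuasiconvexity} supplies five horizontal segments lying on horizontal lines $L_1, \ldots, L_5$ in directions $v_1, \ldots, v_5$, with connected union containing $z_1, z_2$ and total length $\leq 3 d(z_1, z_2)$. For each $j$, fix a reference point $q_j \in L_j$ and consider the smooth map $\He \ni p \mapsto \pi_{\W_{v_j}}(p \cdot q_j) \in \W_{v_j}$, which is the composition of right translation by $q_j$ with $\pi_{\W_{v_j}}$. A direct coordinate computation shows that $\pi_{\W_{v_j}}$ has full-rank Jacobian everywhere on $\He$, so the composite is a submersion, and hence preimages of $\calL^2$-null sets have $\calL^3$ measure zero. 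Pulling back the $\calL^2$-null set $B_j \subset \W_{v_j}$ of bad parameters from Step 1 and unioning over $j$, I obtain a $\calL^3$-null set $N \subset \He$ such that for every $p \notin N$ each translated line $p \cdot L_j$ is good.

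\textbf{Step 3 (Conclusion).} Since $N$ has empty interior, for every $\varepsilon > 0$ I can pick $p_\varepsilon \in B(\mathbf{0}, \varepsilon) \setminus N$. Because left translation by $p_\varepsilon$ is a $d$-isometry and preserves horizontal directions, the translated path from $p_\varepsilon \cdot z_1$ to $p_\varepsilon \cdot z_2$ consists of five horizontal segments lying on good lines, with total length $\leq 3 d(z_1, z_2)$. Summing the Lipschitz estimates from Step 1 over these segments yields $|f(p_\varepsilon \cdot z_1) - f(p_\varepsilon \cdot z_2)| \leq 3L \cdot d(z_1, z_2)$, and continuity of $f$ closes the argument as $\varepsilon \to 0$. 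The main technical point to verify is the submersion property of $\pi_{\W_v}$, without which one could not deform the five-segment path through enough good horizontal lines while keeping the endpoints close to $z_1, z_2$.
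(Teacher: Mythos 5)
Your proof is correct and takes a genuinely different (and arguably cleaner) route than the paper's, though both hinge on the same two ingredients: Lemma \ref{l:preImage} to produce ``good'' horizontal lines on which $E$ has locally finite $\calH^{0}$, and Lemma \ref{l:horizontalQuasiconvexity} to connect $z_{1}$ to $z_{2}$ by five horizontal segments of total length $\leq 3d(z_{1},z_{2})$. The paper perturbs each of the five segments \emph{independently}: for each $j$ it picks a nearby good line $\pi_{\W}^{-1}\{\tilde{w}\}$ and chooses a segment $\tilde{\ell}_{j}$ on it whose endpoints lie within $\epsilon$ of the original nodes $z_{j},z_{j+1}$; the resulting five segments are disconnected, but the gaps are handled by a uniform continuity error $\eta(\epsilon) \to 0$. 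You instead perturb the \emph{whole path simultaneously} by a single left translation $p_{\varepsilon}$, choosing $p_{\varepsilon}$ via a Fubini/submersion argument: the map $p \mapsto \pi_{\W_{v_{j}}}(p\cdot q_{j})$ is a submersion (right translation is a diffeomorphism, and the coordinate formula $\pi_{\W}(x,y,t)=(y,\,t+\tfrac{1}{2}xy)$ visibly has rank $2$), so preimages of the $\calL^{2}$-null ``bad'' sets are $\calL^{3}$-null, and a generic small $p_{\varepsilon}$ makes all five translated lines good at once. This keeps the path connected, avoids the gap-patching, and even yields the clean constant $3L$ (whereas the paper's (iii) introduces an unspecified comparability constant). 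The trade-off is that you must verify the submersion property, whereas the paper's per-segment perturbation uses Lemma \ref{l:preImage} directly without any differential-geometric input.

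One small inaccuracy to flag: your opening remark that ``the hypotheses are preserved by replacing $E$ with $\overline{E}$'' is not quite right --- $\overline{E}$ need not have locally finite $\calH^{3}$-measure (take $E$ countable and dense in a ball). The correct reading of the lemma is that $\He\setminus E$ should be open for ``$f\in\calC^{1}(\He\setminus E)$'' to make sense, i.e.\ $E$ is already closed; equivalently one may shrink $E$ to the closed set whose complement is the maximal open set on which $f$ is $\calC^{1}$, which only improves the measure hypothesis. In the paper's application $E=\spt\mu$ is closed, so this is harmless, but the stated justification is wrong. You also use implicitly, and should say, that left translation preserves the direction of horizontal lines, which follows because $(a,b,c)\cdot(x,y,t)$ has $(x,y)$-part $(a+x,b+y)$.
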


\begin{proof} Fix $z_{1},z_{2} \in \He$, and pick a bounded open set $U$ containing both $z_{1},z_{2}$, and also the line segments $\ell_{1},\ldots,\ell_{5}$ constructed in Lemma \ref{l:horizontalQuasiconvexity}. For notational convenience we rename $z_{2}$ as $z_{6}$: then we can say that $\ell_{j}$ connects $z_{j}$ to $z_{j + 1}$ for all $1 \leq j \leq 5$, where $z_{j},z_{j + 1}$ are the endpoints of $\ell_{j}$.

Fix $\epsilon > 0$ small. We would like to replace the segments $\ell_{j}$ by segments $\tilde{\ell}_{j}$ with the following properties:
\begin{itemize}
\item[(i)] For $1 \leq j \leq 5$, the segment $\tilde{\ell_{j}}$ connects $z_{j,2}$ to $z_{j + 1,1}$, where $z_{j,1},z_{j,2} \in B(z_{j},\epsilon)$,
\item[(ii)] $\calH^{0}(E \cap \tilde{\ell}_{j}) < \infty$.
\item[(iii)] $\calH^{1}(\tilde{\ell}_{j}) \sim \calH^{1}(\ell_{j})$.
\end{itemize}
We cannot guarantee that the union of the segments $\tilde{\ell}_{j}$ is connected, but this will not be an issue, if $\epsilon > 0$ is chosen small enough. Note that $\ell_{j}$ is contained on a line of the form $\pi_{\W}^{-1}\{w\}$ for a certain vertical plane $\W$, and $w \in \W$. Now, we pick $\tilde{w} \in \W$ very close to $w$ such that $\calH^{0}(E \cap \pi_{\W}^{-1}\{\tilde{w}\}) < \infty$, using Lemma \ref{l:preImage}. Then, a suitable line segment $\tilde{\ell}_{j} \subset \pi_{\W}^{-1}\{\tilde{w}\}$ satisfies (i)--(iii).

Now, we can finish the proof of the lemma. Note that $f$ restricted to any line segment $\tilde{\ell}_{j}$ is Lipschitz with constant depending only on the $L^{\infty}$-norm of $\nabla_{\He}f$, as one can see by piecewise integration. Hence,
\begin{displaymath} |f(z_{j + 1,1}) - f(z_{j,2})| \lesssim d(z_{j + 1,1},z_{j,2}) = \calH^{1}(\tilde{\ell}_{j}) \sim \calH^{1}(\ell_{j}). \end{displaymath}
Now, it follows from the continuity of $f$, the triangle inequality, and the choice of the segments $\ell_{j}$, that
\begin{displaymath} |f(z_{2}) - f(z_{1})| \lesssim \sum_{j = 1}^{5} \calH^{1}(\ell_{j}) + \eta(\epsilon) \sim d(z_{1},z_{2}) + \eta(\epsilon), \end{displaymath}
where
\begin{displaymath} \eta(\epsilon) = \max \{|f(z_{j}) - f(z_{j}')| : 1 \leq j \leq 6 \text{ and } z_{j}' \in B(z_{j},\epsilon)\}. \end{displaymath}
Since $\eta(\epsilon) \to 0$ as $\epsilon \to 0$ by the continuity of $f$, the proof is complete. \end{proof}

The following theorem provides a dualisation of weak $(1,1)$ inequalities, and is due to Davie and \O{}ksendal \cite{DO}. The proof can also be found in \cite[Lemma 4.2]{MR1372240} or \cite[Theorem 4.6]{tolsabook}. Let $X$ be a locally compact Hausdorff space; the reader may think that $X = \He$. We recall that $\mathcal{C}_0(X)$ denotes the vector space of continuous functions which vanish at infinity: $f \in \mathcal{C}_0(X)$ if and only if for every $\varepsilon>0$ there exists a compact set $E \subset X$ such that $|f(x)|< \varepsilon$ for all $x \in X \setminus E$. It is well known that $\mathcal{C}_0(X)$ equipped with the sup-norm is a Banach space. The dual of  $\mathcal{C}_0(X)$ is the Banach space $(\mathcal{M}(X),\|\cdot\|)$; the space of all signed Radon measures on $X$ with finite total variation $\|\nu\|$.

\begin{thm}\label{dual} Let $T_{i} \colon \mathcal{M}(X) \ra \mathcal{C}_0(X)$, $i \in \{1,2\}$, be bounded linear operators and let $T_{i}^*:\mathcal{M}(X) \ra \mathcal{C}_0(X)$ be the (formal) adjoint operators of $T_{i}$ satisfying,
\begin{equation}
\label{adjmeas}
\int (T_{i} \nu_1) \, d\nu_2=\int (T_{i}^\ast \nu_2) \, d \nu_1,
\end{equation}
for all $\nu_1,\nu_2 \in \mathcal{M}(X)$. Assume also that $\mu$ is a Radon measure on $X$ such that the operators $T_{i}^\ast : \mathcal{M}(X) \ra L^{1,\infty}(\mu)$ are bounded, that is
\begin{equation}
\label{weak11}
\mu (\{x \in X: |T_{i}^\ast\nu (x)|> \lambda\}) \leq C \frac{\|\nu\|}{\lambda}
\end{equation}
for all $\nu \in \mathcal{M}(X)$ and $\lambda>0$. Then, for every Borel set $E \subset X$ with $0 < \mu(E)<\infty$ there exists a function $h \in L^{\infty}(\mu)$, satisfying $0 \leq h(x) \leq \chi_{E}(x)$ for $\mu$ almost every $x \in X$, such that
$$\mu(E) \leq 2 \int h \,d\mu \quad\text{and}\quad \|T_i(h \mu)\| \leq 3 C.$$
\end{thm}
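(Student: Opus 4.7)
The plan is to cast the conclusion as a minimax problem and solve it using Banach--Alaoglu compactness together with the weak-$(1,1)$ hypothesis. First I would introduce the convex set
\begin{equation*}
\mathcal{K} := \left\{ h \in L^{\infty}(\mu) : 0 \leq h \leq \chi_{E}, \; \int h \, d\mu = \tfrac{1}{2}\mu(E) \right\},
\end{equation*}
which is non-empty (take $h = \tfrac{1}{2}\chi_{E}$) and weak-$*$ compact in $L^{\infty}(\mu|_{E})$, together with the convex set $\mathcal{U} := \{(\nu_{1},\nu_{2}) \in \mathcal{M}(X)^{2} : \|\nu_{1}\| + \|\nu_{2}\| \leq 1\}$. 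Since $T_{i}(h\mu) \in \mathcal{C}_{0}(X)$, signed measures in the unit ball of $\mathcal{M}(X)$ test $L^{\infty}$-norms (replacing $\nu_{i}$ by $-\nu_{i}$ drops absolute values), and \eqref{adjmeas} supplies the representation
\begin{equation*}
\max_{i \in \{1,2\}} \|T_{i}(h\mu)\|_{\infty} = \sup_{(\nu_{1},\nu_{2}) \in \mathcal{U}} \int h \cdot (T_{1}^{\ast}\nu_{1} + T_{2}^{\ast}\nu_{2}) \, d\mu, \qquad h \in \mathcal{K}.
\end{equation*}

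Next I would invoke a minimax theorem (e.g.\ Kneser's or Sion's) to exchange $\inf_{h \in \mathcal{K}}$ and $\sup_{(\nu_{1},\nu_{2}) \in \mathcal{U}}$. The functional $(h,\nu) \mapsto \int h(T_{1}^{\ast}\nu_{1}+T_{2}^{\ast}\nu_{2}) \, d\mu$ is bilinear; for fixed $\nu$ it is weak-$*$ continuous in $h$, because $T_{i}^{\ast}\nu_{i} \in \mathcal{C}_{0}(X)$ is bounded and $\mu(E) < \infty$ imply $T_{i}^{\ast}\nu_{i}\,\chi_{E} \in L^{1}(\mu)$. This gives
\begin{equation*}
v := \inf_{h \in \mathcal{K}} \max_{i} \|T_{i}(h\mu)\|_{\infty} = \sup_{(\nu_{1},\nu_{2}) \in \mathcal{U}} \inf_{h \in \mathcal{K}} \int h \cdot (T_{1}^{\ast}\nu_{1}+T_{2}^{\ast}\nu_{2}) \, d\mu.
\end{equation*}

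Finally, for fixed $(\nu_{1},\nu_{2}) \in \mathcal{U}$, I would set $g := T_{1}^{\ast}\nu_{1} + T_{2}^{\ast}\nu_{2}$ and observe that $|g| > \tau$ forces $|T_{i}^{\ast}\nu_{i}| > \tau/2$ for some $i$; the weak-$(1,1)$ hypothesis \eqref{weak11} then yields $\mu(\{|g|>\tau\}) \leq 2C(\|\nu_{1}\|+\|\nu_{2}\|)/\tau \leq 2C/\tau$. Choosing $\tau := 4C/\mu(E)$ makes $\mu(E \cap \{g>\tau\}) \leq \mu(E)/2$, so $\mathcal{K}$ contains a function $h$ supported in $E \cap \{g \leq \tau\}$, for which $\int h g \, d\mu \leq \tau \cdot \mu(E)/2 = 2C$. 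Passing to the sup over $\mathcal{U}$ gives $v \leq 2C \leq 3C$, and weak-$*$ compactness shows that the outer infimum is attained by some $h \in \mathcal{K}$ satisfying both $\mu(E) = 2\int h \, d\mu$ and $\max_{i}\|T_{i}(h\mu)\|_{\infty} \leq 3C$.

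The main obstacle will be the minimax exchange: one must identify the topologies in which the bilinear form is continuous so that a minimax theorem applies, and confirm that $\mathcal{K}$ with the weak-$*$ topology inherited from $L^{\infty}(\mu|_{E})$ is the right framework. Once this step is in place, the distribution-function bound reduces to a one-line Chebyshev estimate from the weak-$(1,1)$ hypothesis.
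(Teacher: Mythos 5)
The paper does not prove Theorem \ref{dual} itself; it cites Davie--\O{}ksendal, Mattila--Paramonov, and Tolsa's book for the proof, and all of those references use exactly the duality/minimax strategy you propose. Your argument is correct as sketched (with the minor notational slip $\{g \leq \tau\}$ versus $\{|g| \leq \tau\}$, which does not affect the estimate), and it even yields the slightly sharper constant $2C$ rather than $3C$: taking $\mathcal{K}$ weak-$*$ compact in $L^\infty(\mu|_E)$, identifying $\sup_{\mathcal{U}}\int h(T_1^*\nu_1+T_2^*\nu_2)\,d\mu$ with $\max_i\|T_i(h\mu)\|_\infty$ via \eqref{adjmeas}, applying Sion's minimax theorem, and then the Chebyshev-type estimate from \eqref{weak11} is precisely the standard route. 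So this is the same approach, not an alternative one.
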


Recall that if $D \subset \He$ is open, a function $f: D \ra \R$ is called \emph{harmonic} if it is a distributional solution to the sub-Laplacian equation $\Delta_{\He} f=0$, see \eqref{eq:subLapl} for the definition of $\Delta_{\He}$. We record that by H\"ormander's theorem, see for instance \cite[Theorem 1]{BLU}, all distributional solutions to the sub-Laplacian equation are in $\calC^\infty(\He)$. Hence, in accordance to the Euclidean case, one can naturally define removable sets for Lipschitz harmonic functions in $\He$; these were introduced in \cite{CM}.

\begin{df}
\label{d:rem} A closed set $E \subset \He$ is called {\em removable for Lipschitz harmonic functions}, if whenever  $D \supset E$ is open, every  Lipschitz function $f:D \ra \R$ which is  harmonic in $D\setminus E$ is also harmonic in $D$. 
\end{df}



We are now ready to prove Theorem \ref{mainRem2}:

\begin{proof} We will follow a well known scheme which dates back to Uy \cite{uy}. See also \cite[Theorem 4.4]{MR1372240}, \cite[Chapter 4]{tolsabook} and \cite[Chapter 2]{zbMATH01249699}. Recall from Section \ref{ss:KernelsAndSIO} that the truncated $\He$-Riesz transform $\mathcal{R}$ of a Radon measure $\nu$ at level $\varepsilon$ consists of two components:
$$\mathcal{R}^i_{\varepsilon} \nu (p)=\int_{\|q^{-1}\cdot p\|>\varepsilon} \mathcal{K}^i(q^{-1}\cdot p) d \nu (q), \quad p \in \He,\; i=1,2,$$
where $\mathcal{K}(p)=(\mathcal{K}^1(p), \mathcal{K}^2(p))=\nabla_{\He} \|p\|^{-2}$. See \eqref{KExplicit} for an explicit formula for $\mathcal{K}^i$.

%

We fix a compact set $E \subset \spt \mu$ with $0<\mu(E)<\infty$. Our goal is to apply Theorem \ref{dual}, and the first step is to define smoothened versions of the operators $\mathcal{R}^i_{\mu,\varepsilon},i=1,2,$ as in Section \ref{sec:vert1}. Let $\phi:\R \ra [0,1]$ be a $\calC^{\infty}$ function such that $\phi=0$ on $(-1/2,1/2)$ and $\phi=1$ on $\R \setminus (-1,1)$. We then define
\begin{displaymath}
\tilde{\mathcal{R}}^i_{\mu,\varepsilon}(f)(p)=\int \phi \left( \frac{\|q^{-1}\cdot p\|}{\varepsilon}\right)\mathcal{K}^i(q^{-1}\cdot p)f(q) \, d\mu(q),\quad i=1,2,
\end{displaymath}
and
$$\tilde{\mathcal{R}}^i_{\varepsilon}\nu(p)=\int \phi \left( \frac{\|q^{-1}\cdot p\|}{\varepsilon}\right)\mathcal{K}^i(q^{-1}\cdot p) \, d\nu(q),\quad i=1,2,$$
for $\nu \in \mathcal{M}(\He)$. The operator $\tilde{\mathcal{R}}^i_{\varepsilon}$, and its formal adjoint $\tilde{\mathcal{R}}^{i,\ast}_{\varepsilon}$, both map $\mathcal{M}(\He)$ to $\mathcal{C}_0(\He)$, something that is not true for $\mathcal{R}^i_{\varepsilon}$. The operators $\mathcal{R}^i_{\mu,\varepsilon},i=1,2,$ are uniformly bounded in $L^2(\mu)$ by hypothesis, and by Remark \ref{r:adjoint} the same holds for their adjoints. Arguing as in Lemma \ref{l:smoothcomp}, and using the $L^2(\mu)$-boundedness of the maximal function $M_{\mu}$ we deduce that the smoothened operators $\tilde{\mathcal{R}}^i_{\mu,\varepsilon}$ and their adjoints $\tilde{\mathcal{R}}^{i,\ast}_{\mu,\varepsilon}$ are also uniformly bounded in $L^2(\mu)$: there exists a constant $C_1>0$ such that for all $f \in L^2(\mu)$ and every $\ve>0$,
\begin{equation}
\label{adjsmoothl2}\|\tilde{\mathcal{R}}^{i,\ast}_{\mu,\varepsilon}( f)\|_{L^2(\mu)} \leq C_1 \|f\|_{L^2(\mu)}, \quad i=1,2.
\end{equation}
 By a result of Nazarov, Treil and Volberg \cite[Corollary 9.2]{MR1626935} the uniform $L^2$-boundedness \eqref{adjsmoothl2} implies that the operators $\tilde{\mathcal{R}}^{i,\ast}_{\ve}$ map $\mathcal{M}(\He)$ to $L^{1,\infty}(\mu)$  boundedly. That is, for every $\lambda>0$ and every $\nu \in \mathcal{M}(\He)$,
\begin{equation}
\label{weak11}
\mu (\{p \in \He: |\tilde{\mathcal{R}}^{i,\ast}_{\ve}\nu (p)|> \lambda\}) \leq C_2 \frac{\|\nu\|}{\lambda},
\end{equation}
where $C_2$ only depends on $C_1$. We can now apply Theorem \ref{dual} to the operators $\tilde{\mathcal{R}}^i_{\varepsilon}$. We thus obtain, for every $\ve>0$, a function $h_{\ve} \in L^{\infty}(\mu)$ such that $0 \leq h_{\ve}(p) \leq \chi_{E}(p)$ for $\mu$ almost every $p \in \He$, and
\begin{equation}
\label{hemeas}
\int h_{\ve} \, d\mu \geq \mu(E)/2
\end{equation}
and
\begin{equation}
\label{linfbound}
\|\tilde{\mathcal{R}}^i_{\mu,\varepsilon} h_{\ve}\| \leq 3 C_2.
\end{equation}
The norm appearing in \eqref{linfbound} is the $\sup$-norm in $\He$. Let $\Phi(p)=\|p\|^{-2}$ be (a multiple of) the fundamental solution of the sub-Laplacian in $\He$. For $\ve>0$, we consider the functions
$$f_{\ve}(p)=\int \Phi(q^{-1} \cdot p) h_{\ve}(q) \, d\mu(q), \quad p \in \He.$$
Recall that we have $h_\ve(p) =0$ for $\mu$ almost every $p \in \He \stm E$.  Moreover since $E$ is compact and $\mu$ satisfies $\mu(B(p,r)) \lesssim r^{3}$, it follows easily that $f_{\ve}$ is well defined for all $p \in \He$. The left invariance of $\nabla_{\He}$  implies that
$$\nabla_{\He} f_{\ve} (p)=(\tilde{\mathcal{R}}^1_{\mu,\varepsilon} h_{\ve}(p),\tilde{\mathcal{R}}^2_{\mu,\varepsilon} h_{\ve}(p))$$
for $p \in \He \setminus N_{\ve}(E)$ where $N_{\ve}(E)=\{p \in \He: d(p,E)<\ve\}$. By \eqref{linfbound}, we infer that
\begin{equation}
\label{nablabound}
|\nabla_{\He} f_{\ve} (p)| \leq 6 C_2
\end{equation}
for $p \in \He \setminus N_{\ve}(E)$.

As a consequence of the Banach-Alaoglu Theorem, see  \cite[Corollary 3.30]{Brezis}, there exists a sequence $\ve_n \ra 0$ such that $h_{\ve_n}$ converges to some function $h \in L^\infty (E, \mu|_{E})$ in the weak$^\ast$ topology. This means that
\begin{equation}
\label{weakstar}\int_E h_{\ve_n}g\, d\mu \ra \int_E hg\, d\mu,
\end{equation}
for all $g \in L^1(E, \mu|_{E})$.  We now apply \eqref{weakstar} to the function $g=1$ and we invoke \eqref{hemeas} to get
\begin{equation}
\label{lastbound}
\int_E h \, d\mu \geq \mu(E)/2.
\end{equation}
It follows easily, see  \cite[Proposition 3.13]{Brezis}, that $\|h\|_{L^\infty(E, \mu|_{E})} \leq 1$. Hence $0 \leq h(p) \leq \chi_{E}(p)$ for $\mu$ almost every $p \in E$. Assuming that $h=0$ outside $E$ we set $\nu=h \,d\mu$, so  $\spt \nu \subset E$ and
\begin{equation}
\label{3growth}
\nu(B(p,r)) \leq C r^{3}, \quad\mbox{ for }p\in \He, r>0,
\end{equation}
where $C$ is the ADR constant of $\mu$.

Let
$$f(p) :=\int \Phi(q^{-1}\cdot p) d\nu(q), \qquad p \in \He.$$
Our aim is to show that $f$ is a Lipschitz function which is harmonic in $\He \stm E$  but not in $\He$.  First note that
$$\lim_{n \ra \infty} f_{\ve_n}(p)=f(p)$$
for $p \in \He \stm E$. Since $E$ is compact, \eqref{3growth} implies that $f$ is well defined in $\He$. But more is true; it turns out that $f$ is continuous. To see this, let $p \in \He$ and take a sequence $(p_n)_{n\in \N}$ such that $p_n \ra p$. Let $\delta>0$ and fix $r< \delta/ 100 C$, where $C$ is the constant from \eqref{3growth}. We then write
\begin{equation*}
\begin{split}
|f(p_n)-f(p)| &\leq \int_{E\setminus B(p,r)} |\Phi(q^{-1} \cdot p_n) - \Phi(q^{-1} \cdot p)|\,d \nu(q) \\
&\qquad+ \int_{B(p,r)} |\Phi(q^{-1} \cdot p_n) - \Phi(q^{-1} \cdot p)|\, d \nu(q)\\
&=I_1(n)+I_2(n).
\end{split}
\end{equation*}
Since $E$ is compact, the continuity of $\Phi$ and \eqref{3growth} imply that there exists some $n_1 \in \N$ such that $I_1(n)<\delta/100$ for $n \geq n_1$. We now estimate $I_2(n)$. Pick $n_2 \in \N$ such that $d(p_n,p)<r$ for all $n \geq n_2$. Hence, if $d(q,p)<r$, we also have that $d(q,p_n)<2r$. Therefore using \eqref{3growth} and integrating on annuli we get
\begin{equation*}
\begin{split}I_2(n) &\leq \int_{B(p_n,2r)}\frac{1}{\|q^{-1} \cdot p_n\|^2}d \nu(q)+ \int_{B(p,r)}\frac{1}{\|q^{-1} \cdot p\|^2}d \nu(q) \leq 24 C r< \delta.
\end{split}
\end{equation*}
Putting these estimates together we have that $|f(p_n)-f(p)|<\delta$ for $n>n_1+n_2$, thus $f$ is continuous.

Note that the sequence $(f_{\ve_n})$ is equicontinuous
on compact subsets of $\He \stm E$, and by the Arzel\`a-Ascoli theorem
there exists a subsequence $(f_{\ve_{n_l}})$  which converges uniformly
on compact subsets of $\He \stm E$. Therefore the Mean Value Theorem for
sub-Laplacians and its converse, see \cite[Theorem 5.5.4 and Theorem 5.6.3]{BLU},  imply that $f$ is harmonic in $\He \stm E$.

We will now show that $f$ is Lipschitz in $\He$. We intend to apply  Lemma \ref{l:lipschitzLemma}. The first step comes again from H\"ormander's theorem \cite[Theorem 1]{BLU}; since $f$ is harmonic in $\He \stm E$, it is also in $\calC^\infty (\He \setminus E)$. We already proved that $f$ is continuous in $\He$ so we only need to show that $\nabla_{\He} f \in L^{\infty}(\He \setminus E)$. To this end, pick any  $p \in \He \stm E$ and let $r>0$ be such that $B_{cc}(p,r) \subset \He \stm E$, where $B_{cc}(p,r)$ denotes a ball with respect to the standard \emph{sub-Riemannian distance} on $\He$.
For $n$  big enough, \cite[Proposition 3.9]{CM} and \eqref{nablabound} imply that $f_{\ve_n}$ is Lipschitz in $B_{cc}(p,r)$ with $\Lip(f_{\ve_n}|_{B_{cc}(p,r)}) \leq 6 C_2 C_3$, where $C_3$ is the comparison constant of $d$ and $d_{cc}$. We know that $f_{\ve_n} \ra f$ pointwise in (the compact set) $\bar{B}_{cc}(p,r/2)$ hence $f$ is Lipschitz in $\bar{B}_{cc}(p,r/2)$ with $\Lip(f|_{\bar{B}_{cc}(p,r/2)}) \leq 6 C_2 C_3$. Applying \cite[Proposition 3.9]{CM} once more we conclude that $|\nabla_{\He} f (p)| \leq 6 C_2 C_3.$ Therefore we have shown that $\nabla_{\He} f \in L^{\infty}(\He \setminus E)$, and by Lemma \ref{l:lipschitzLemma}, $f$ is Lipschitz.

Finally in order to finish the proof of the theorem it suffices to show that $f$ is {\em not} harmonic in $\He$. By the definition of the fundamental solution, see \cite[Definition 5.3.1 (iii)]{BLU} we deduce that there exists some $c>0$ (which comes from the normalization of the fundamental solution)
$$\langle \Delta_{\He} f, 1\rangle=-c \int h\, d\mu \leq -c \, \mu(E)/2<0.$$
Hence $f$ is not harmonic in $\He$, and the proof of Theorem \ref{mainRem2} is complete.
\end{proof}

Corollary \ref{cor1} is clear: Let $\phi \in C^{1,\alpha}(\W)$ with $\alpha > 0$, and $\Gamma = \Gamma(\phi) \subset \He$. If $E \subset \Gamma$ is closed with $\calH^{3}(E) > 0$, then the  measure $\mu = \calH^{3}|_{E}$ satisfies the hypotheses of Theorem \ref{mainRem2}, by the 3-$AD$ regularity of $\calH^{3}|_{\Gamma}$, Theorem \ref{mainABC}, and the discussion in the end of Section \ref{s:cancel} . Hence $E$ is not removable.

Corollary \ref{cor2} requires a little argument. We recall the statement:

\begin{cor} Let $\alpha > 0$. Assume that $\Omega \subset \R^{2} \cong \W$ is open, and $\phi \in \calC^{1,\alpha}(\Omega)$. If $E$ is a closed subset of the intrinsic graph $\Gamma = \{w \cdot \phi(w) : w \in \Omega\}$ with $\calH^{3}(E) > 0$, then $E$ is not removable.
\end{cor}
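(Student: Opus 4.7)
The plan is to reduce Corollary \ref{cor2} to Corollary \ref{cor1} via a Euclidean cutoff of $\phi$, and then to transfer non-removability from a compact subset of $E$ back to $E$ itself.

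First I would use inner regularity of Hausdorff measure to extract a compact set $K \subset E$ with $0 < \calH^{3}(K) < \infty$. Since $K \subset \Gamma = \{w \cdot \phi(w) : w \in \Omega\}$ and $\pi_{\W}$ is continuous, $\pi_{\W}(K)$ is a compact subset of the open set $\Omega$. I would then pick a Euclidean bump $\chi \in \calC^{\infty}_{c}(\Omega)$ with $0 \leq \chi \leq 1$ and $\chi \equiv 1$ on some open neighbourhood $U \subset \Omega$ of $\pi_{\W}(K)$, and define $\tilde\phi := \chi \cdot \phi$, extended by zero to all of $\R^{2} \cong \W$. The product is compactly supported and in $\calC^{1,\alpha}(\R^{2})$, so by Remark \ref{e:euclideanHolder} it lies in the intrinsic class $C^{1,\alpha}(\W)$.

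Let $\tilde\Gamma := \Gamma(\tilde\phi)$. Because $\tilde\phi = \phi$ on $U$ and $\pi_{\W}(K) \subset U$, every point of $K$ is also of the form $w \cdot \tilde\phi(w)$, so $K$ is a closed subset of $\tilde\Gamma$ with $\calH^{3}(K) > 0$. Corollary \ref{cor1} applied to $\tilde\phi$ then yields that $K$ is not removable for Lipschitz harmonic functions. To promote this to non-removability of $E$, I would invoke the fact that the proof of Theorem \ref{mainRem2} actually produces a witness $f : \He \to \R$ that is Lipschitz on all of $\He$, harmonic on $\He \setminus K$, but not harmonic on $\He$. Since $K \subset E$, the inclusion $\He \setminus E \subset \He \setminus K$ guarantees that the same $f$ is Lipschitz on $\He$, harmonic on $\He \setminus E$, and fails to be harmonic on $\He$, so taking $D = \He$ in Definition \ref{d:rem} shows that $E$ is not removable.

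The step I expect to be the main obstacle is this last transfer, because for the definition of removability as literally stated one cannot in general deduce non-removability of a closed superset from that of a closed subset: the domain $D$ associated with the subset need not contain the superset. The argument succeeds here only because the witness produced by Theorem \ref{mainRem2} is globally defined on $\He$. A secondary point to verify is that multiplication by a Euclidean cutoff preserves intrinsic $C^{1,\alpha}(\W)$-regularity, but this is exactly what Remark \ref{e:euclideanHolder} says, and is precisely the reason Corollary \ref{cor1} was formulated in the compactly supported setting.
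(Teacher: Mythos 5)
Your proof matches the paper's: both reduce to Corollary \ref{cor1} by replacing $E$ with a compact piece $K$, multiplying $\phi$ by a Euclidean bump supported in $\Omega$ and equal to $1$ near $\pi_{\W}(K)$, and invoking Remark \ref{e:euclideanHolder} to place the truncation in the compactly supported intrinsic class $C^{1,\alpha}(\W)$. The one place you are more careful than the paper is the transfer of non-removability from $K$ back to $E$: the paper simply writes ``replacing $E$ by a compact subset\ldots we may assume that $E$ itself is compact,'' which tacitly uses that a closed superset of a non-removable set is non-removable, a fact that is not immediate from Definition~\ref{d:rem} alone because the open set $D$ witnessing non-removability of $K$ need not contain $E$. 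Your observation that the witness $f$ constructed in the proof of Theorem \ref{mainRem2} is defined on all of $\He$, so one may take $D = \He$, is exactly the right justification and correctly identifies where this otherwise-problematic monotonicity step is grounded.
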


\begin{proof} Replacing $E$ by a compact subset with positive $3$-dimensional measure, we may assume that $E$ itself is compact, and so is $\pi_{\W}(E) \subset \Omega$. Let $\psi$ be a compactly supported $\calC^{\infty}(\R^2)$ function with $\chi_{\pi_{\W}(E)} \leq \psi \leq \chi_{\Omega}$. Then
\begin{displaymath}
\widetilde{\phi} := \left\{ \begin{array}{ll}\psi \phi,&\text{ in }\Omega,\\ \psi,&\text{ in }\mathbb{R}^2 \setminus \Omega,\end{array} \right.
\end{displaymath}
defines a  $\calC^{1,\alpha}$ function which agrees with $\phi$ on $\pi_{\W}(E)$.
 (This is where the Euclidean regularity is required; we do not know if \emph{intrinsic} $C^{1,\alpha}$ regularity is preserved under such product operations.) Now, by Remark \ref{e:euclideanHolder}, we infer that $\widetilde{\phi} \in C^{1,\alpha}(\W)$, and $\widetilde{\phi}$ is certainly compactly supported. Consequently, by Corollary \ref{cor1}, compact subsets of the intrinsic graph $\Gamma(\widetilde{\phi})$ with positive $3$-measure are not removable. In particular, this is true for $E \subset \Gamma(\widetilde{\phi})$.  \end{proof}

\bibliographystyle{plain}
\bibliography{references}

\end{document}